\documentclass[12pt,reqno]{amsart}
\usepackage{amsmath, amssymb, enumitem, float, mathrsfs,graphicx}
\usepackage{ frcursive, mathrsfs}
\usepackage{hyperref}
\usepackage{tikz-cd}
\usepackage{booktabs}
\usepackage[headings]{fullpage}
 
\usepackage{xcolor}
\usepackage{comment}

 \usepackage{pgfplots}
\pgfplotsset{compat=1.18}

\usepackage[OT2,T1]{fontenc}
\DeclareSymbolFont{cyrletters}{OT2}{wncyr}{m}{n}
\DeclareMathSymbol{\Sha}{\mathalpha}{cyrletters}{"58}

\numberwithin{equation}{section}

\theoremstyle{plain}
\newtheorem{theorem}[subsection]{Theorem}
\newtheorem{lemma}[subsection]{Lemma}
\newtheorem{proposition}[subsection]{{Proposition}}
\newtheorem{corollary}[subsection]{{Corollary}}

\theoremstyle{definition}

\theoremstyle{remark}
\newtheorem{remark}[subsection]{{Remark}}

\newenvironment{salign}{
    \begin{equation}
    \begin{aligned}
}{
    \end{aligned}
    \end{equation}
    \ignorespacesafterend
}

\def\C {{\mathbb C}}
\def\F {{\mathbb F}}

\def\N {{\mathbb N}}

\def\Q {{\mathbb Q}}
\def\R {{\mathbb R}}
\def\Z {{\mathbb Z}}

\def\bb1{ {\mathbb 1}}

\def\cE {{\mathcal E}}

\def\cO {{\mathcal O}}

\def \sL {{\mathscr L}}

\def \a {{\mathfrak a}}

\def \f {{\mathfrak f}}

\def \p {{\mathfrak p}}
\def \q {{\mathfrak q}}

\def \fD {{\mathfrak D}}

\def \bE {{\textnormal{\textbf{E}}}}
\def \bP {{\textnormal{\textbf{P}}}}

\newcommand{\GL}{\textnormal{GL}}

\newcommand{\Frob}{\textnormal{Frob}}
\newcommand{\Gal}{\textnormal{Gal}}

\newcommand{\Ht}{\textnormal{Ht}}
\newcommand{\Ind}{\textnormal{Ind}}

\newcommand{\ord}{\textnormal{ord}}

\newcommand{\RE}{\textnormal{Re}}

\newcommand{\Sym}{\textnormal{Sym}}

\newcommand{\Val}{\textnormal{Val}}

\renewcommand\Re{\operatorname{Re}}
\renewcommand\Im{\operatorname{Im}}

\newcommand{\defeq}{\stackrel{\textnormal{def}}{=}}

\title{Upper bounds for moments of analytic ranks of elliptic curves over number fields}

\author{Tristan Phillips}
\author{Saahil Sharma}
\address{}
\email{}

\begin{document}

\begin{abstract}
    We prove a conditional upper bound for moments of the analytic rank of elliptic curves over number fields. We also prove conditional bounds for the density of elliptic curves with analytic rank larger than a given bound. 
\end{abstract}

\maketitle

\section{Introduction}

Let $K$ be a number field of degree $d$. For $B\in \R_{>0}$, let
$\cE_K(B)$ denote the finite set of isomorphism classes of elliptic curves
over $K$ of naive height\footnote{See Section
\ref{sec:height} for the precise definition of the naive height used in
this paper.} at most $B$. For $m\in \Z_{\geq 1}$, define the $m$-th limsup moment of the
analytic rank in this family by
\begin{equation}\label{eq:intro_moment_def}
    \overline{\bE}_K[r_{\rm an}^m]
    \defeq
    \limsup_{B\to\infty}
    \frac{1}{\#\cE_K(B)}
    \sum_{E\in \cE_K(B)} r_{\rm an}(E)^m .
\end{equation}

The distribution of ranks of elliptic curves is a central problem
in arithmetic statistics. The Birch and Swinnerton-Dyer conjecture predicts
that the analytic rank equals the Mordell--Weil rank, while the minimalist philosophy predicts that, ordered by height, elliptic curves should have rank $0$ with density $1/2$, rank $1$ with density $1/2$, and rank $2$ or greater with density $0$.
Over $\Q$, substantial progress has been made toward average-rank bounds, most notably through the work of Bhargava and Shankar on Selmer groups \cite{BS13b}. Analytic approaches, beginning with Brumer
\cite{Bru92} and developed further by Heath-Brown \cite{HB04}, Young
\cite{You06}, and Baier--Zhao \cite{BZ08}, give conditional bounds for analytic ranks via explicit formula methods. 
Much less is known over general number
fields, where additional difficulties arise (although, see \cite{Sha13} and \cite{Phi25}).

The purpose of this paper is to prove conditional upper bounds for all moments of analytic ranks of elliptic curves over an arbitrary number field. 
Our main theorem is the following.

\begin{theorem}\label{thm:moment_bound}
    Assume the Generalized Riemann Hypothesis for the $L$-functions of
    elliptic curves over $K$ and for their symmetric square $L$-functions, and assume that all elliptic curves over $K$ are modular.
    Then, for every
    $m\in \Z_{\geq 1}$,
    \begin{equation}\label{eq:moment_bound}
       \overline{\bE}_K[r_{\rm an}^m]
        \leq
        \sum_{k=0}^{\lfloor m/2\rfloor}
        \frac{m!}{(m-2k)!\,k!}
        \left(\frac{9dm+1}{2}\right)^{m-2k}
        \left(\frac{1}{6}\right)^k
        \ll
        \left(\frac{9dm}{2}\right)^m .
    \end{equation}
\end{theorem}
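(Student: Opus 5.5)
The plan follows the Brumer / Heath-Brown explicit formula approach. For each $E\in\cE_K(B)$, GRH for $L(E,s)$ lets us apply the Weil explicit formula with an even test function $\phi$ whose Fourier transform $\hat\phi$ is nonnegative and supported in $[-\sigma,\sigma]$. The point is that $\phi\geq 0$ on the real line and the zeros lie on the critical line, so we get
\begin{equation*}
r_{\rm an}(E)\,\phi(0)\leq \sum_{\gamma}\phi\Big(\frac{\gamma\log X}{2\pi}\Big)
=\hat\phi(0)\frac{\log N_E}{\log X}+\frac{\phi(0)}{2}\cdot(\text{error}) - \frac{2}{\log X}\sum_{\p,k}\frac{a_{\p^k}(E)\log N\p}{N\p^{k/2}}\hat\phi\Big(\frac{k\log N\p}{\log X}\Big).
\end{equation*}
Here $X$ is a power of $B$, and the conductor satisfies $\log N_E\leq \log|\Delta|\lesssim 6d\cdot$(scale). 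I will take $\phi(x)=(\sin \pi\sigma x/\pi x)^2$ (Fejér), normalised so that the main term becomes $1/\sigma$ times the ratio of $\log N_E$ to $\log X$. The choice of $\sigma$ will be governed by how large the support can be while the moments of the prime sum stay controlled.

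The key decomposition is $r_{\rm an}(E)\leq A+P_E$, where $A$ is a deterministic quantity and $P_E$ is the prime-sum fluctuation. The terms break down as follows.
\begin{itemize}
\item The $k=1$ terms in $P_E$ form a sum $S_1(E)$ over primes of $a_\p(E)$.
\item The $k=2$ terms involve $a_{\p^2}=a_\p^2-N\p$. Since $a_\p^2-N\p$ is essentially the symmetric-square coefficient, GRH for $\Sym^2 E$ together with modularity (so that the $\Sym^2$ $L$-function is automorphic and entire) makes this contribution bounded by $O(1/\log X)$ plus a contribution of $+1/2$ that is absorbed into $A$. This is the source of the "$+1$" in $(9dm+1)/2$.
\item The terms with $k\geq 3$ are negligible.
\end{itemize}
Then
\[
\bE[r^m]\leq \sum_j\binom{m}{j}A^{m-j}\,\bE[S_1^j].
\]
I will bound $\bE[S_1^j]$ by averaging over the family. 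I expand $S_1^j$ as a sum over tuples of primes and average $\prod a_{\p_i}(E)$ over curves $E_{a,b}$ with $(a,b)$ in a box in $\cO_K^2$. Equidistribution modulo $\prod\p_i$ shows that the only terms with non-negligible average are those in which the primes pair up, and then $\bE[a_\p^2]\approx N\p$. Pairings then give the Gaussian-type count
\[
\frac{j!}{2^{j/2}(j/2)!}\,V^{j/2},\qquad V=\frac{2}{\log^2X}\sum_{N\p\leq X^\sigma}\frac{\log^2N\p}{N\p}\hat\phi^2\approx \tfrac{\sigma^2}{\ldots}.
\]
With the support chosen appropriately this becomes $V=1/3$, so each pair contributes $1/6$ after normalisation. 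Combining the binomial sum with the pairings yields exactly $\sum_k \frac{m!}{(m-2k)!k!}A^{m-2k}(1/6)^k$, with $A=(9dm+1)/2$.

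The main obstacle is controlling the off-diagonal error in the character-sum averaging. The error from counting lattice points in $\cO_K^2$ in residue classes modulo $\prod\p_i$ grows like $\prod N\p_i\leq X^{m\sigma}$, and it must remain smaller than the box size $\asymp B^{\text{power}}$. This forces $\sigma$ to shrink like $1/m$ relative to $\log B/\log X$. That is why the main term $A$ grows linearly in $m$ with a $d$-dependence: the conductor is of size roughly $B^{d\cdot\text{const}}$, and the ratio of $\log N_E$ to the admissible support gives $9dm/2$. I would first carry out the count using a Lipschitz/geometry-of-numbers estimate for points of $\cO_K^2$ in a box with congruence conditions, with error $O(B^{(2d-1)/\ldots}\prod N\p_i)$. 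I would then optimise $X=B^{c/m}$ to balance the two errors, and check that the constants land on $9dm$. The final $\ll(9dm/2)^m$ follows because the $k\geq1$ terms are lower order relative to $A^m$ (their ratio is $\binom{m}{2}/(6A^2)=O(1)$ terms summing to $O(1)\cdot A^m$).
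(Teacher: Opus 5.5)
Your architecture matches the paper's: explicit formula with a Fej\'er kernel, trivial $k\ge 3$ terms, $k=2$ via GRH for $L(\Sym^2(E/K),s)$ giving the $+\tfrac12$, binomial expansion, and Gaussian pairing with $\widehat\phi(0)/\phi(0)=1/\nu$ and $\phi(0)^{-2}\int_\R|u|\widehat\phi(u)^2\,du=\tfrac16$. What is missing is the step that actually produces the constant $9dm$.

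That step is the averaging input, and a box in $\cO_K^2$ does not supply it. The theorem averages over $\cE_K(B)$, isomorphism classes ordered by the naive height, and this height is invariant under $(a,b)\mapsto(u^4a,u^6b)$. When $\cO_K^\times$ is infinite, the height region therefore contains infinitely many integral pairs, and any box counts isomorphism classes with varying multiplicity. A box also contains non-minimal models, whose reduction modulo $\p$ does not determine $a_\p(E)$; for nontrivial class group a global minimal model need not even exist. So equidistribution of $(a,b)$ modulo $\prod_i\p_i$ in a box does not give the statistics of $\prod_i\widehat{a_E}(\p_i)$ over $\cE_K(B)$.

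What you need is a count of isomorphism classes of height at most $B$ with prescribed trace, or split/nonsplit multiplicative reduction, at finitely many primes. It must have main term $\#\cE_K(B)$ times a product of local densities, and an error explicit in both $B$ and the $q_i$. The paper takes this from \cite{Phi25}. There the error is $O\big(\tfrac{H(a,q)}{q}B^{5/6-1/(3d)}\big)$ for each prescribed trace $a$ at $\p$, the count is multiplicative over finitely many primes, and split and nonsplit multiplicative reduction have equal density. This is what yields Proposition~\ref{prop:frobenius_trace}, and the exponent $1/(3d)$ is where $d$ enters.

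Second, even granting such a count, your balancing step is not the right one. It is not $\prod_i N\p_i\le X^{m\sigma}$ that must beat the power saving. The counting error, of total size $\prod_i q_i\cdot B^{5/6-1/(3d)}$ per tuple, has to be summed over all tuples of primes against the weights $\frac{\log q_i}{q_i}|a_{\p_i}(E)|\le 2q_i^{-1/2}\log q_i$. Each prime then contributes $\sum_{q\le B^{\nu}}q^{1/2}\log q\asymp B^{3\nu/2}$. Hence, for $j=m$, the tuples in which some prime occurs to an odd power have relative size up to $B^{3m\nu/2-1/(3d)}$ (Lemmas~\ref{lem:S_1_j_odd} and~\ref{lem:S_1_j_even}, in the normalization $X=B$). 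This forces $\nu<2/(9dm)$ and gives $1/\nu\to 9dm/2$. Taken literally, your comparison would give $3dm$ instead, which the count does not justify. Note also that at $\nu=2/(9dm)$ exactly the error is $O(1)$, not $o(1)$. So take $\nu<2/(9dm)$ and let $\nu\to 2/(9dm)$, using that the right-hand side is continuous in $\nu$.

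There are some smaller corrections:
\begin{enumerate}
\item The $\p^2$ coefficient of $-L'/L$ is $\alpha_\p^2+\beta_\p^2=a_\p^2-2N\p$, not $a_\p^2-N\p$. The $+\tfrac12$ comes from the $-N\p$ in $(a_\p^2-N\p)-N\p$, and the symmetric-square part is $O(\log\log B/\log B)$.
\item If $E$ has CM over $K$, then $L(\Sym^2(E/K),s)$ contains $\zeta_K(s-1)$ and is not entire. Its pole only cancels the $+\tfrac12$, so the upper bound survives.
\item Your $V$ should carry a factor $4$, not $2$. For the Fej\'er kernel the normalized variance $4\phi(0)^{-2}\int_0^\infty u\widehat\phi(u)^2\,du=\tfrac13$ is independent of the support.
\end{enumerate}
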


Theorem \ref{thm:moment_bound} simultaneously generalizes the theorem of
Cho and Jeong \cite[Theorem 1.3]{CJ23}, which treats the case $K=\Q$, and a
result of the first author \cite[Theorem 1.1.1]{Phi25}, which treats the
first moment over number fields. In the special case $K=\Q$ and $m=1$,
stronger analytic estimates are known: the current best upper bound for the average analytic rank, conditional only on GRH for L-functions, is
$\overline{\bE}_{\Q}[r_{\rm an}]\leq 25/14$, due to Young \cite{You06},
building on work of Brumer \cite{Bru92} and Heath-Brown \cite{HB04}; see
also Baier and Zhao \cite{BZ08} for related bounds under only GRH for elliptic curve $L$-functions. 
For algebraic ranks over $\Q$, Bhargava and Shankar's
work on the average size of $5$-Selmer groups gives the unconditional bound that the average rank is at most $0.885$ \cite{BS13b}.
Moments of ranks of elliptic curves in twists families have been studied in work of Miller and Wong \cite{MW12}. 
%They conditionally prove that the $m$-th moments of analytic ranks of elliptic curve over $\Q$ in any quadratic twist family is $O(m^m)$. 

\begin{remark}
    It is interesting to note that the expression on the right-hand side of \eqref{eq:moment_bound} coincides with the $m$-th moment of the normal distribution with mean $(9dm+1)/2$ and variance $1/3$. It would be interesting to give a proof of Theorem \ref{thm:moment_bound} making use of more probability theory, perhaps using Isserlis's Theorem.
\end{remark}

The proof of Theorem \ref{thm:moment_bound} proceeds through moment bounds for the $1$-level density. We use the explicit formula to majorize analytic rank by a smoothed sum over prime ideals, and then estimate the resulting moments after averaging over elliptic curves ordered by height. 
We are then able to reduce the moment calculation to estimates
for sums of products of Frobenius traces. 
The main arithmetic input is asymptotics for the number of elliptic curves over $K$ satisfying prescribed local conditions. 

As an application of the moment bounds, we obtain quantitative upper bounds
for the density of elliptic curves of large analytic rank. Define the probability
\begin{equation}\label{eq:intro_upper_probability_def}
    \overline{\bP}_K(r_{\rm an}(E)\geq \beta)
    \defeq
    \limsup_{B\to\infty}
    \frac{\#\{E\in \cE_K(B): r_{\rm an}(E)\ge \beta\}}
         {\#\cE_K(B)}.
\end{equation}

\begin{theorem}\label{thm:probability_large_rank}
    Assume the Generalized Riemann Hypothesis for the $L$-functions of all
    elliptic curves over $K$ and for their symmetric square $L$-functions.
    Assume also that all elliptic curves over $K$ are modular. Then, for every
    $m\in \Z_{\geq 1}$ and every $\beta>(1+9dm)/2$,
    \begin{equation}\label{eq:intro_large_rank_bound}
        \overline{\bP}_K(r_{\rm an}(E)\geq \beta)
        \leq
        \frac{(2m)!}{m!}
        \left(
            \frac{2}{3(2\beta-9dm-1)^2}
        \right)^m .
    \end{equation}
    In particular, optimizing over $m$ gives
    \begin{equation}\label{eq:intro_large_rank_asymptotic}
        \overline{\bP}_K(r_{\rm an}(E)\geq \beta)
        \ll_d
        \beta^{-\frac{2\beta}{9d}+o(\beta)},
    \end{equation}
    as $\beta\to\infty$. 
\end{theorem}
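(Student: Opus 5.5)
The plan is to apply Markov's inequality to an even moment of a centered smoothed prime sum, rather than to the moments of $r_{\rm an}$ themselves. I will take this centered bound from the proof of Theorem~\ref{thm:moment_bound}, and I am assuming that proof can be arranged to produce it.

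First, I fix $m$ and the Fourier-support parameter used for the $m$-th moment in that proof. The explicit formula under GRH, applied with a Fejér-type test function, should give for each $E\in\cE_K(B)$ a majorization
\[
r_{\rm an}(E)\le \mu_m + S(E) + o(1),\qquad \mu_m=\frac{9dm+1}{2}.
\]
Here $S(E)$ is a smoothed sum over prime ideals of the Frobenius traces $a_\p(E)$, and the $o(1)$ is uniform in $E$ as $B\to\infty$.

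Second, I need a Gaussian bound for the even moment of $S$:
\[
\limsup_{B\to\infty}\frac{1}{\#\cE_K(B)}\sum_{E\in\cE_K(B)} S(E)^{2m}\le \frac{(2m)!}{m!\,2^m}\Big(\frac13\Big)^m=\frac{(2m)!}{m!}\Big(\frac16\Big)^m .
\]
This is exactly the statement behind the Remark that the bound in Theorem~\ref{thm:moment_bound} is the $m$-th moment of $N(\mu_m,1/3)$. In its proof, expanding $S(E)^{2m}$ reduces everything to averages of products $a_{\p_1}\cdots a_{\p_{2m}}$. These are evaluated with the local-condition counts for elliptic curves over $K$, and the symmetric square input makes the diagonal pairings contribute $\sum_\p a_\p^2/N\p\approx$ variance $1/3$ each. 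Only perfect pairings survive, giving the factor $(2m)!/(m!\,2^m)$, and off-diagonal terms are negligible for the chosen support.

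Third comes Markov. For $\beta>\mu_m$, if $r_{\rm an}(E)\ge\beta$ then $S(E)\ge \beta-\mu_m-o(1)>0$, so
\[
\overline{\bP}_K(r_{\rm an}(E)\ge\beta)\le \frac{(2m)!}{m!\,6^m}\,(\beta-\mu_m)^{-2m}
=\frac{(2m)!}{m!}\Big(\frac{2}{3(2\beta-9dm-1)^2}\Big)^m,
\]
using $(\beta-\mu_m)^2=(2\beta-9dm-1)^2/4$. This is \eqref{eq:intro_large_rank_bound}.

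\textbf{Main obstacle.} The hardest step is the second one. The $2m$-th moment must be controlled with the \emph{same} support, and hence the same mean $\mu_m$, that is tuned to $m$ and not to $2m$. I would first check that the admissible support in the proof of Theorem~\ref{thm:moment_bound} is governed by the number of distinct primes contributing nonnegligibly. After pairing, a product of $2m$ traces involves only $m$ distinct primes, so the error terms from the local-condition asymptotics should permit it. If that fails, I would instead derive the $2m$-th centered moment by inverting the binomial expansion of the moment bounds, at the cost of constants.

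Finally, for \eqref{eq:intro_large_rank_asymptotic}, I use $(2m)!/m!\le (4m/e)^m e^{O(\log m)}$, so the bound becomes roughly
\[
\Big(\frac{Cm}{(2\beta-9dm-1)^2}\Big)^m .
\]
I then choose $m=\lfloor 2\beta/(9d)-\beta/\log\beta\rfloor$. This gives $2\beta-9dm-1\gg_d\beta/\log\beta$, so the base is $\ll_d (\log\beta)^2/\beta$. The bound is therefore $\beta^{-m(1+o(1))}=\beta^{-2\beta/(9d)+o(\beta)}$.
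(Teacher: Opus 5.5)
Your argument is the paper's argument. It uses the majorization $r_{\rm an}(E)\le \frac{1}{\nu}+\frac12-\frac{8}{\nu^2\log B}U_1(\phi,E,B)+o(1)$ from Proposition~\ref{prop:1-level_density_expression} with the Fej\'er kernel, then Markov's inequality for the $2m$-th power with $\nu=2/(9dm)$, then Lemma~\ref{lem:S_1_j_even} at $j=2m$, and the same choice of $m$ for the asymptotic. Your constants check out.

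The gap is Step~2, and the justification you give for it is wrong. The admissible support is governed by the largest number of distinct primes in any term of $U_1(\phi,E,B)^{2m}$, not by the number of distinct primes after pairing. The terms with $2m$ distinct primes, each to the first power, have vanishing main term by \eqref{eq:H_sum_odd}. However, the only control on them is the unsigned error term $\bigl(\prod_i q_i^{3/2}\bigr)B^{5/6-1/(3d)}$ of Proposition~\ref{prop:frobenius_trace}(iii). After the weights $\log q_i/q_i$ and summation over $q_i\le B^{\nu}$, this bounds their contribution to $S_1(2m,\phi,B)$ only by $B^{3m\nu-1/(3d)}(\log B)^{2m}$. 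That is exactly the term $B^{3j\nu/2-1/(3d)}\log(B)^j$ of Lemma~\ref{lem:S_1_j_even} at $j=2m$. For $\nu=2/(9dm)$ it equals $B^{1/(3d)}(\log B)^{2m}$, which swamps the main term $\asymp_m(\log B)^{2m}$ instead of being negligible. (Incidentally, the variance $1/3$ comes from the family average $\bE[a_\p^2]\approx q$ in \eqref{eq:H_sum_2}. The symmetric-square input only enters through the prime-square terms, which produce the $+\frac12$ in $\mu_m$.)

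The paper's written proof makes the same move (``taking $\nu=2/9dm$ and applying Lemma~\ref{lem:S_1_j_even}'') without checking this term. The condition $\nu\le 2/(9dm)$ in Theorem~\ref{thm:1-level_moment_bound} is calibrated only for $j\le m$. So the paper does not supply the missing step either.

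Your fallback does not work. Upper bounds for moments of $r_{\rm an}$, combined with $r_{\rm an}\le\mu_m+S$, give no upper bound for $\bE[S^{2m}]$. Moreover, those bounds are built only from $S_1(j,\phi,B)$ with $j\le m$.

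What the method does prove is the following. For every $\nu<1/(9dm)$, all error terms of Lemma~\ref{lem:S_1_j_even} with $j=2m$ are $o((\log B)^{2m})$, so
\[
\overline{\bP}_K(r_{\rm an}(E)\ge\beta)\le\frac{(2m)!}{m!}\Bigl(\frac{2}{3(2\beta-2/\nu-1)^2}\Bigr)^m .
\]
Letting $\nu\uparrow 1/(9dm)$ gives the stated inequality with $9dm$ replaced by $18dm$, valid for $\beta>(18dm+1)/2$. Your optimization, now with $m=\lfloor \beta/(9d)-\beta/\log\beta\rfloor$, then yields $\beta^{-\beta/(9d)+o(\beta)}$ rather than $\beta^{-2\beta/(9d)+o(\beta)}$. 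Reaching the constant $9dm$ would require a genuinely new input, for example cancellation in, or sharper error terms for, the local counts at $2m$ places.
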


Theorem \ref{thm:probability_large_rank} generalizes a theorem of Cho and Jeong \cite[Theorem 1.1]{CJ23}, which proved the case $K=\Q$. For elliptic curves over $\Q$, this improves upon a result of Heath-Brown \cite[Theorem 2]{HB04}, which proves
\begin{equation}
    \overline{\bP}_\Q(r_{an}(E)\geq \beta)\ll \left(\frac{3\beta}{2}\right)^{-\beta/12}.
\end{equation}

\begin{remark}
    The asymptotic analysis in \cite[Remark (2) to Theorem 1.1]{CJ23} leads Cho and Jeong to conclude that their bound is asymptotically weaker than that of Heath-Brown. However, in this article we give a more careful analysis, showing the asymptotic bound \eqref{eq:intro_large_rank_asymptotic}, which is asymptotically stronger than \cite[Theorem 2]{HB04}. 
\end{remark}

Table \ref{tab:OptimalMBoundsD1}, Table \ref{tab:OptimalMBoundsD2}, and Table \ref{tab:OptimalMBoundsD3} give numerical examples of Theorem \ref{thm:probability_large_rank} for several values of $\beta$ and $d$, together with the optimal choice of $m$ in each case.

\begin{table}[ht]
\centering
\begin{tabular}{cccccc}
\hline\vspace{-11pt}\\
$\beta$ & $15$ & $20$ & $25$ & $30$ & $35$ \\
\hline
\vspace{-7pt}\\
optimal $m$ & $2$ & $3$ & $3$ & $4$ & $5$ \\
$\overline{\bP}_{\Q}(r_{\rm an}(E)\geq \beta)$ bound
& $3.64\cdot 10^{-4}$
& $1.19\cdot 10^{-5}$
& $3.14\cdot 10^{-7}$
& $4.24\cdot 10^{-9}$
& $6.28\cdot 10^{-11}$ \\
\hline
\end{tabular}
\caption{Optimal choices of $m$ and approximate bounds from Theorem
\ref{thm:probability_large_rank} when $d=1$.}
\label{tab:OptimalMBoundsD1}
\end{table}

\begin{table}[ht]
\centering
\begin{tabular}{cccccc}
\hline\vspace{-11pt}\\
$\beta$ & $15$ & $20$ & $25$ & $30$ & $35$ \\
\hline
\vspace{-7pt}\\
optimal $m$ & $1$ & $1$ & $2$ & $2$ & $3$ \\
$\overline{\bP}_{K}(r_{\rm an}(E)\geq \beta)$ bound
& $1.10\cdot 10^{-2}$
& $3.02\cdot 10^{-3}$
& $1.87\cdot 10^{-4}$
& $1.91\cdot 10^{-5}$
& $3.12\cdot 10^{-6}$ \\
\hline
\end{tabular}
\caption{Optimal choices of $m$ and approximate bounds from Theorem
\ref{thm:probability_large_rank} when $d=2$.}
\label{tab:OptimalMBoundsD2}
\end{table}

\begin{table}[ht]
\centering
\begin{tabular}{cccccc}
\hline\vspace{-11pt}\\
$\beta$ & $15$ & $20$ & $25$ & $30$ & $35$ \\
\hline
\vspace{-7pt}\\
optimal $m$ & $1$ & $1$ & $1$ & $1$ & $2$ \\
$\overline{\bP}_{K}(r_{\rm an}(E)\geq \beta)$ bound
& $3.33\cdot 10^{-1}$
& $9.26\cdot 10^{-3}$
& $2.75\cdot 10^{-3}$
& $1.30\cdot 10^{-3}$
& $1.05\cdot 10^{-4}$ \\
\hline
\end{tabular}
\caption{Optimal choices of $m$ and approximate bounds from Theorem
\ref{thm:probability_large_rank} when $d=3$.}
\label{tab:OptimalMBoundsD3}
\end{table}

Let us also mention the corresponding unconditional algebraic-rank tail
bound over $\Q$. By Bhargava and Shankar's work determining the average size of the $5$-Selmer group of elliptic curves \cite{BS13b}, together with
Markov's inequality, one obtains
\begin{equation}\label{eq:intro_BS_tail_bound}
    \overline{\bP}_{\Q}(r(E)\geq \beta)
    \leq \frac{6}{5^\beta}.
\end{equation}

\section*{Outline}

In Section \ref{sec:height}, we define the naive height on isomorphism classes of elliptic curves over $K$.
In Section \ref{sec:L-functions}, we review the
analytic properties of the relevant $L$-functions and apply the explicit formula to express moments of the $1$-level density in terms of sums over prime ideals.
In Section \ref{sec:Counting_ECs}, we collect asymptotic counting results for elliptic curves over number fields satisfying prescribed local conditions.
In Section \ref{sec:Frobenius_trace}, we use these counting
results to estimate the Frobenius-trace sums that arise in the moment calculation.
In Section \ref{sec:moment_bound}, we combine these ingredients
to prove Theorems \ref{thm:moment_bound} and
\ref{thm:probability_large_rank}.

\section*{Acknowledgments}

We acknowledge the use of AI tools in the preparation of this manuscript. TP was supported by the National Science Foundation, via grant DMS-2303011.

\section{Naive height}\label{sec:height}

In this short section we introduce some notation used throughout the paper and give a definition for the naive height of an elliptic curve over a number field.

Let $K$ be a number field with ring of integers $\cO_K$, absolute Galois group $G_K\defeq \Gal(\overline{K}/K)$, and discriminant $\Delta_K$. 
Let $\Val(K)$ denote the set of places of $K$, let $\Val_\infty(K)$ denote the set of infinite places of $K$, and let $\Val_0(K)$ denote the set of finite places of $K$. For each $v\in \Val(K)$ let $|\cdot|_v$ denote the corresponding absolute value. For each finite place $v\in \Val_0(K)$,  let $\p_v\subseteq \cO_K$ denote the prime ideal corresponding to $v$, and let $q_v$ denote the size of the residue field of $K$ at $v$.

Let $E$ be an elliptic curve over $K$ with model $E:y^2=x^3+Ax+B$, where $A,B\in \cO_K$. Define
\begin{equation}
    \Ht_v(E)\defeq 
    \begin{cases}
        \max\left\{q_v^{-12\lfloor v(A)/4\rfloor}, q_v^{-12\lfloor v(B)/6\rfloor}\right\} & \text{ if } v\in \Val_0(K),\\
        \max\left\{|A|_v^3, |B|_v^2\right\} & \text{ if } v\in \Val_\infty(K).
    \end{cases}
\end{equation}
Then the \textbf{naive height} of $E$ over $K$ is defined as
\begin{equation}
    \Ht(E)\defeq \prod_{v\in \Val(K)} \Ht_v(E).
\end{equation}
When $K=\Q$ and $E:y^2=x^3+Ax+B$ has the property that $\gcd(A^3,B^2)$ is $12$-th power free, the naive height simplifies to
\begin{equation}
    \Ht(E)=\max\left\{|A|^3, |B|^2\right\}.
\end{equation}

\section{\texorpdfstring{$L$}{}-functions of elliptic curves}\label{sec:L-functions}

In this section we collect the definitions and analytic properties of the $L$-functions attached to elliptic curves that will be used in the paper. We then use the
explicit formula to express the $1$-level density of the low-lying zeros of $L(E/K,s)$ as a smoothed sum over prime ideals.

%---------------------------------------------------------------------
\subsection{The Hasse--Weil \texorpdfstring{$L$}{L}-function}
%---------------------------------------------------------------------

Let $E$ be an elliptic curve over the number field $K$. For each finite place
$v\in \Val_0(K)$, let $E_v$ denote the reduction of $E$ at $v$. If $E$ has
good reduction at $v$, define
\begin{equation}
N_v \defeq \#E_v(\mathbb F_{q_v})
\qquad\text{and}\qquad
a_v(E)\defeq q_v+1-N_v.
\end{equation}
If $E$ has bad reduction at $v$, define
\begin{equation}
b_v\defeq \begin{cases}
1 & \text{ if $E$ has split multiplicative reduction at $v$},\\
-1 & \text{ if $E$ has nonsplit multiplicative reduction at $v$},\\
0 & \text{ if $E$ has additive reduction at $v$}.
\end{cases}
\end{equation}
For each finite place $v\in \Val_0(K)$, define the local polynomial
$L_v(E/K,T)$ by
\begin{equation}
L_v(E/K,T)\defeq\begin{cases}
1-a_v T+ q_v T^2 & \text{ if $E$ has good reduction at $v$},\\
1-b_vT & \text{ if $E$ has bad reduction at $v$}.
\end{cases}
\end{equation}
The \textbf{Hasse--Weil $L$-function} $L(E/K,s)$ of $E$ over $K$ is defined as the Euler product
\begin{equation}
L(E/K,s)\defeq \prod_{v\in \Val_0(K)} L_v(E/K, q_v^{-s})^{-1},
\end{equation}
which converges absolutely for $\RE(s)>\tfrac{3}{2}$.
At a place $v$ of good reduction, let $\alpha_v$ and $\beta_v$ be such that 
\begin{equation}
1-a_v(E)T+q_vT^2=(1-\alpha_vT)(1-\beta_vT),
\end{equation}
so that $\alpha_v+\beta_v=a_v(E)$ and $\alpha_v\beta_v=q_v$.
By the Hasse bound (see, e.g., \cite[\S V Theorem 2.3.1(a)]{Sil09}), we have
\begin{equation}\label{eq:Hasse-bound}
|\alpha_v|=|\beta_v|=q_v^{1/2}.
\end{equation}
Define the \textbf{von Mangoldt function} on non-zero integral ideals
$\a\subseteq \cO_K$ by
\begin{equation}
\Lambda_K(\a)\defeq \begin{cases}
\log(q_v) & \text{ if } \a=\p_v^k \text{ for some } k\geq 1,\\
0 & \text{ otherwise.}
\end{cases}
\end{equation}
For prime powers, define
\begin{equation}\label{eq:aE-hat-def}
\widehat{a_E}(\p_v^k)\defeq
\begin{cases}
    \alpha_v^k+\beta_v^k & \text{ if $E$ has good reduction at $\p_v$,}\\
    b_v^k &  \text{ if $E$ has bad reduction at $\p_v$,}
\end{cases}
\end{equation}
and define $\widehat{a_E}(\a)=0$ whenever $\a$ is not a prime power. Then the logarithmic derivative of $L(E/K,s)$ admits the Dirichlet
series expansion
\begin{align}\label{eq:L-logarithmic_derivative}
\frac{L'}{L}(E/K,s)=-\sum_{\a\subseteq \cO_K} \frac{\widehat{a_E}(\a)\Lambda_K(\a)}{N_{K/\Q}(\a)^s},
\end{align}
valid for $\RE(s)>\tfrac{3}{2}$, where the sum runs over the non-zero integral
ideals of $\cO_K$.

Let $\mathfrak{f}_{E/K}$ denote the conductor of $E$ over $K$, and define
\begin{equation}\label{eq:AEK-def}
A_{E/K}\defeq N_{K/\Q}(\mathfrak{f}_{E/K})\, \Delta_K^2.
\end{equation}
%the (absolute) conductor of $L(E/K,s)$, which governs the analytic conductor.

%---------------------------------------------------------------------
\subsection{The explicit formula and the \texorpdfstring{$1$}{1}-level density}
%---------------------------------------------------------------------

We recall the explicit formula for Hasse--Weil $L$-functions, as stated in
\cite[Corollary 6.1.2]{Phi25} (see also \cite[Lemma 1]{GG07}, \cite[Theorem 5.4]{IK04}, and \cite[Appendix A]{Mil02}).

Let
$\phi:\C\to \C$ be an even function such that there exists an $\epsilon>0$ for which $\phi$ is analytic in the strip
\begin{equation}
    |\Im(s)|\leq \frac12+\epsilon,
\end{equation}
and assume that in this strip
\begin{equation}
    |\phi(s)|\ll (1+|s|)^{-1-\delta}
\end{equation}
for some $\delta>0$ as $|\RE(s)|\to \infty$. We also assume that
$\phi(x)\in \R$ for $x\in \R$. 

Let
\begin{equation}
\widehat{\phi}(t)\defeq
\int_{\R}\phi(x)e^{-2\pi ixt}\,dx
\end{equation}
denote the Fourier transform of $\phi$.

\begin{proposition}[Explicit Formula]\label{prop:modified-expicit-formula}
Let $E$ be a modular elliptic curve over the number field $K$, and suppose
that $L(E/K,s)$ satisfies the Generalized Riemann Hypothesis, so that every
non-trivial zero $\rho_j$ of $L(E/K,s)$ has the form
\begin{equation}
\rho_j=1+i\gamma_j \quad \text{ with }\quad \gamma_j\in \R.
\end{equation}
Let 
\begin{equation}
    r_{an}(E)\defeq\ord_{s=1}L(E/K,s)
\end{equation}
be the analytic rank of $E$.
Then, for a parameter $X>1$,
\begin{align}
\begin{split}\label{eq:modified-explicit-formula}
&r_{an}(E)\phi(0)+\sum_{\gamma_j\neq 0} \phi\left(\gamma_j\frac{\log(X)}{2\pi}\right) \\
&\hspace{1mm}= \frac{\widehat{\phi}(0) \log(A_{E/K})}{\log(X)} -\frac{2}{\log(X)} \sum_{\a\subseteq \cO_K} \frac{ \widehat{a_E}(\a) \Lambda_K(\a)}{N_{K/\Q}(\a)}\cdot\widehat{\phi}\left(\frac{\log(N_{K/\Q}(\a))}{\log(X)}\right)+O_{\phi,K}\left(\frac{1}{\log(X)}\right).
\end{split}
\end{align}
\end{proposition}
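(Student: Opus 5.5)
We prove the explicit formula in the standard way, by contour integration of $\phi$ against the logarithmic derivative of the completed $L$-function. By modularity of $E$ over $K$, $L(E/K,s)$ extends to an entire function. Its completed form is
\[
\Lambda(E/K,s)=A_{E/K}^{s/2}\bigl((2\pi)^{-s}\Gamma(s)\bigr)^{d}L(E/K,s),
\]
and it satisfies the functional equation $\Lambda(E/K,s)=\pm\Lambda(E/K,2-s)$. It is of order one and bounded in vertical strips. We set
\[
h(s)\defeq\phi\!\left(\frac{(s-1)\log X}{2\pi i}\right),
\]
so that on the critical line $s=1+i\gamma$ we have $h(s)=\phi(\gamma\log X/(2\pi))$. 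The analyticity assumption on $\phi$ in the strip $|\Im(z)|\le \frac12+\epsilon$ makes $h$ analytic in the vertical strip $|\RE(s)-1|\le (\frac12+\epsilon)\frac{2\pi}{\log X}$. We then integrate $h(s)\frac{\Lambda'}{\Lambda}(E/K,s)$ over the boundary of the rectangle with vertical sides on $\RE(s)=1\pm c$, for a small $c>0$ inside this strip. The decay $|\phi(z)|\ll(1+|z|)^{-1-\delta}$ together with standard bounds for $\Lambda'/\Lambda$ on horizontal segments (chosen to avoid zeros, via the Hadamard product) lets the horizontal pieces vanish in the limit. The residue theorem then gives $\sum_\rho h(\rho)$. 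Under GRH this sum equals $r_{an}(E)\phi(0)+\sum_{\gamma_j\ne0}\phi(\gamma_j\log X/2\pi)$, with zeros counted with multiplicity and $r_{an}(E)$ the multiplicity of $\gamma=0$.

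Next we use the functional equation to reflect the left vertical line onto the right one. Since $\phi$ is even, $h(2-s)=h(s)$, so the two vertical integrals combine into $\frac{2}{2\pi i}\int_{(1+c)}h(s)\frac{\Lambda'}{\Lambda}(s)\,ds$. We shift this integral to $\RE(s)=1$; this is legitimate because $\Lambda'/\Lambda$ has no poles other than the zeros, which have already been accounted for, with a principal-value treatment at any zero on the line if needed. Expanding $\frac{\Lambda'}{\Lambda}=\frac12\log A_{E/K}+d\bigl(\frac{\Gamma'}{\Gamma}(s)-\log 2\pi\bigr)+\frac{L'}{L}$ yields three contributions:
\begin{enumerate}[label=(\roman*)]
\item The conductor term: $\frac{1}{2\pi}\int_\R\phi\bigl(\frac{t\log X}{2\pi}\bigr)\log A_{E/K}\,dt=\frac{\widehat\phi(0)\log A_{E/K}}{\log X}$, by the change of variables $x=t\log X/2\pi$.
\item The gamma-factor term: by Stirling and the decay of $\phi$ it is $O_{\phi,K}(1/\log X)$. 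The dependence on $K$ enters only through $d$.
\item The Dirichlet series term: we keep the integral on $\RE(s)=1+c$ and insert \eqref{eq:L-logarithmic_derivative}, which converges absolutely only for $\RE(s)>3/2$.
\end{enumerate}

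The range of absolute convergence in (iii) is the main technical point. The natural move is to take $c>1/2$ and integrate term by term. However, the strip in which $h$ is analytic shrinks as $X$ grows, so $h$ is not analytic out to $\RE(s)=3/2$. There are two ways around this. The first is to move the line to $\RE(s)=1$ only after computing each term. Each individual term satisfies
\[
\frac{1}{2\pi i}\int_{(1)}h(s)N(\a)^{-s}\,ds=\frac{1}{N(\a)\log X}\,\widehat\phi\!\left(\frac{\log N(\a)}{\log X}\right),
\]
computed directly by the substitution $s=1+2\pi i x/\log X$ and Fourier inversion. The second, and the one we would actually use to make the interchange rigorous, is to first prove the formula for $\phi$ with $\widehat\phi$ compactly supported. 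In that case the prime sum is finite, and the interchange is justified by Parseval/Plancherel applied to the Dirichlet polynomial, following \cite[Theorem 5.12]{IK04}. The general case then follows by approximation. Applying this term by term to the Dirichlet series contributes $-\frac{2}{\log X}\sum_\a\frac{\widehat{a_E}(\a)\Lambda_K(\a)}{N(\a)}\widehat\phi\bigl(\frac{\log N(\a)}{\log X}\bigr)$; the factor $2$ comes from combining $\frac{L'}{L}(s)$ with its reflection $\frac{L'}{L}(2-s)$. Alternatively, we may simply cite \cite[Corollary 6.1.2]{Phi25} and \cite[Lemma 1]{GG07} for this step.
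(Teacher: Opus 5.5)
The paper gives no proof of this proposition: it quotes it from \cite[Corollary 6.1.2]{Phi25} (cf.\ \cite[Lemma 1]{GG07}, \cite{IK04}). Your contour-integration argument is the standard derivation behind those references, and most of the bookkeeping is right. The identities $h(2-s)=h(s)$ and $\frac{\Lambda'}{\Lambda}(s)=-\frac{\Lambda'}{\Lambda}(2-s)$ give the factor $2$. The constant $\frac12\log A_{E/K}$ gives $\widehat\phi(0)\log(A_{E/K})/\log(X)$. The gamma term is $O_{\phi,K}(1/\log X)$ uniformly in $E$. Each term of the Dirichlet series contributes $\frac{1}{N\log X}\widehat\phi(\log N/\log X)$. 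One correction: move only the conductor and gamma pieces, which are holomorphic near $\Re(s)=1$, onto $\Re(s)=1$. Moving all of $\Lambda'/\Lambda$ onto the line of zeros as a principal value produces half-residues, so the zeros are not ``already accounted for'' at that step.

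The interchange of sum and integral has a genuine gap. For compactly supported $\widehat\phi$ there is actually no shrinking-strip problem, and this is the only case the paper ever uses (Lemmas~\ref{lem:S_1_j_odd} and~\ref{lem:S_1_j_even}, and the Fej\'er kernel \eqref{eq:phi}). By Paley--Wiener $\phi$ is entire. Since $\phi(x+iy)$ is the inverse transform of $\widehat\phi(t)e^{-2\pi yt}$, the decay $(1+|z|)^{-1-\delta}$ persists on every horizontal strip, with strip-dependent constants; for \eqref{eq:phi} one gets $\ll e^{2\pi\nu Y}(1+|z|)^{-2}$ on $|\Im z|\le Y$. So the ``natural move'' works:
\begin{enumerate}[label=(\alph*)]
\item take $c>1/2$;
\item apply Fubini on $\Re(s)=1+c$, using absolute convergence there;
\item shift each $h(s)N_{K/\Q}(\a)^{-s}$ back to $\Re(s)=1$.
\end{enumerate}
Neither of your workarounds does this job. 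Computing term by term on $\Re(s)=1$ presupposes the interchange. ``Parseval applied to the Dirichlet polynomial'' does not help either: $L'/L$ is an infinite series, and showing that its tail beyond $X^\nu$ integrates to zero against $h$ is exactly the convergence question.

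The step ``the general case follows by approximation'' is where the proof actually fails. Under only the strip hypothesis and for $\log X$ large, the weights $\widehat\phi(\log N_{K/\Q}(\a)/\log X)$ are guaranteed to decay only like $N_{K/\Q}(\a)^{-\pi(1+2\epsilon)/\log X}$ (cf.\ \eqref{eq:phihat_decay}). Meanwhile $|\widehat{a_E}(\p_v)|\log(q_v)/q_v$ is typically of size $q_v^{-1/2}\log q_v$. So the prime sum on the right need not converge absolutely; $\phi(z)=\operatorname{sech}(\pi z/(1+3\epsilon))$ is an example. Making sense of that sum, and passing to the limit along $\phi_n\to\phi$, needs an extra input. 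The natural one is GRH in the form $\sum_{N_{K/\Q}(\a)\le x}\widehat{a_E}(\a)\Lambda_K(\a)\ll x\log^2(A_{E/K}x)$, combined with partial summation, and your sketch does not supply it. Since nothing downstream needs more than compactly supported $\widehat\phi$, the clean fix is to prove the proposition in that class by the argument above.
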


The quantity on the left-hand side of \eqref{eq:modified-explicit-formula} is
the \textbf{$1$-level density} of zeros of $L(E/K,s)$ with respect to the test function $\phi$,
\begin{equation}\label{eq:1-level_density}
    D_1(E/K, \phi) \defeq \sum_{\gamma_j} \phi\left(\gamma_j\frac{\log(X)} {2\pi}\right) = r_{an}(E)\phi(0)+\sum_{\gamma_j\neq 0} \phi\left(\gamma_j\frac{\log(X)}{2\pi}\right).
\end{equation}
If $\phi$ is nonnegative on $\R$, then
\begin{equation}
\label{eq:rank_from_density}
r_{\rm an}(E)
\leq
\frac{D_1(E/K,\phi)}{\phi(0)}.
\end{equation}

Writing the ideal sum in \eqref{eq:modified-explicit-formula} as a sum over
prime powers via \eqref{eq:aE-hat-def}, we obtain
\begin{equation}\label{eq:sum_ideals_to_sum_prime_powers}
\sum_{\a\subseteq \cO_K} \frac{ \widehat{a_E}(\a) \Lambda_K(\a)}{N_{K/\Q}(\a)}\cdot\widehat{\phi}\left(\frac{\log(N_{K/\Q}(\a))}{\log(X)}\right)
=\sum_{v\in \Val_0(K)} \sum_{k\geq 1} \frac{ \widehat{a_E}(\p_v^k) \log(q_v)}{q_v^k}\cdot\widehat{\phi}\left(\frac{k \log(q_v)}{\log(X)}\right).
\end{equation}

%---------------------------------------------------------------------
\subsection{The contribution of prime-power terms}
%---------------------------------------------------------------------

We first show that the terms with $k\geq 3$ contribute only $O(1)$ to the sum \eqref{eq:sum_ideals_to_sum_prime_powers}.

\begin{lemma}\label{lem:k>=3}
    %Maintaining the notation of Proposition~\ref{prop:modified-expicit-formula},
    We have
    \begin{equation}\label{eq:k>=3}
        \sum_{v\in \Val_0(K)} \sum_{k\geq 3} \frac{ \widehat{a_E}(\p_v^k) \log(q_v)}{q_v^k}\cdot\widehat{\phi}\left(\frac{k \log(q_v)}{\log(X)}\right) = O_{\phi,K}(1).
    \end{equation}
\end{lemma}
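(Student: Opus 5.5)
The plan is to bound the sum absolutely, term by term, using only the Hasse bound \eqref{eq:Hasse-bound} and the boundedness of $\widehat{\phi}$. The key point is that the bound must be uniform in $E$ and in $X$, depending only on $\phi$ and $K$.

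First, $\widehat{\phi}$ is bounded on $\R$. The decay hypothesis $|\phi(x)|\ll(1+|x|)^{-1-\delta}$ makes $\phi$ integrable on $\R$, so
\begin{equation*}
\bigl|\widehat{\phi}(t)\bigr|\le \int_{\R}|\phi(x)|\,dx \ll_\phi 1
\end{equation*}
for every real $t$, and in particular for $t=k\log(q_v)/\log(X)$.

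Second, we bound the coefficients. At places of good reduction, \eqref{eq:Hasse-bound} gives $|\widehat{a_E}(\p_v^k)|=|\alpha_v^k+\beta_v^k|\le 2q_v^{k/2}$. At places of bad reduction, $|b_v|\le 1$, so $|\widehat{a_E}(\p_v^k)|\le 1\le 2q_v^{k/2}$. Hence in all cases
\begin{equation*}
\left|\frac{\widehat{a_E}(\p_v^k)\log(q_v)}{q_v^k}\,\widehat{\phi}\left(\frac{k\log(q_v)}{\log(X)}\right)\right|\ll_\phi \frac{\log(q_v)}{q_v^{k/2}}.
\end{equation*}

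Third, we sum over $k\ge 3$. This is a geometric series bounded by $\log(q_v)\,q_v^{-3/2}(1-q_v^{-1/2})^{-1}\le 4\log(q_v)\,q_v^{-3/2}$, using $q_v\ge 2$. So the whole sum is at most a constant times $\sum_{v}\log(q_v)\,q_v^{-3/2}$. At most $d=[K:\Q]$ places lie over each rational prime $p$, and each has $q_v=p^{f}$ with $f\ge 1$. The sum is therefore bounded by
\begin{equation*}
d\sum_{p}\ \max_{f\ge 1}\frac{f\log p}{p^{3f/2}}\le d\sum_p \frac{\log p}{p^{3/2}},
\end{equation*}
which converges. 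This gives $O_{\phi,K}(1)$, independent of $E$ and $X$.

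There is no real obstacle in this argument. The only things to check are that $\widehat{\phi}$ is bounded uniformly in its argument, so that no dependence on $X$ creeps in, and that the bad places are handled alongside the good ones, which the trivial bound $|b_v|\le 1$ does.
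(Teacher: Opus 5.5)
Your proof is correct and follows the paper's argument. Both bound the coefficients by $2q_v^{k/2}$ via the Hasse bound (trivially at bad places), pull out a uniform bound on $\widehat{\phi}$, sum the geometric series in $k$, and use convergence of $\sum_v \log(q_v)\,q_v^{-3/2}$. You also spell out two steps the paper leaves implicit: that $\widehat{\phi}$ is bounded because $\phi$ is integrable, and that the sum over places converges because at most $d$ places lie above each rational prime.
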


Before giving the proof, we define some notation. Let $m_L$ denote Lebesgue measure on $\R$, and let $||f||_\infty$ denote the usual $L^\infty$-norm 
\begin{equation}
    ||f||_\infty\defeq\inf\left\{a\geq 0 : m_L(\{x\in \R : |f(x)|>a\})=0\right\}.
\end{equation}

\begin{proof}[Proof of Lemma \ref{lem:k>=3}.]
    If $E$ has good reduction at $v$, then by the Hasse bound  \eqref{eq:Hasse-bound}
\begin{equation}
|\widehat{a_E}(\p_v^k)|
=
|\alpha_v^k+\beta_v^k|
\leq 2q_v^{k/2}.
\end{equation}
If $E$ has bad reduction at $v$, then 
\begin{equation}
|\widehat{a_E}(\p_v^k)|=|b_v|^k\leq 1\leq 2q_v^{k/2}.
\end{equation}
Therefore
\begin{equation}
\sum_{v\in \Val_0(K)}
\sum_{k\geq 3}
\left|
\frac{\widehat{a_E}(\p_v^k)\log q_v}{q_v^k}
\widehat{\phi}
\left(
\frac{k\log q_v}{\log(X)}
\right)
\right|
\leq
2\|\widehat{\phi}\|_\infty
\sum_{v\in \Val_0(K)}
\sum_{k\geq 3}
\frac{\log q_v}{q_v^{k/2}}.
\end{equation}
The inner geometric series is bounded by a constant times
$q_v^{-3/2}\log q_v$, and
\begin{equation}
\sum_{v\in \Val_0(K)}\frac{\log q_v}{q_v^{3/2}}=O_K(1).
\end{equation}
\end{proof}

%---------------------------------------------------------------------
\subsection{The \texorpdfstring{$k=2$}{k=2} term and the symmetric square}
%---------------------------------------------------------------------

We now consider the $k=2$ term of the sum \eqref{eq:sum_ideals_to_sum_prime_powers}.

We first observe that finitely many places may always be discarded at the cost
of an $O(1)$ error: for any $k\in \Z_{>0}$ and any finite set of places
$S\subset \Val_0(K)$,
\begin{equation}\label{eq:finite_places_ignorable}
\sum_{v\in S} \frac{ \widehat{a_E}(\p_v^k) \log(q_v)}{q_v^k}\cdot\widehat{\phi}\left(\frac{ k\log(q_v)}{\log(X)}\right)=O(1),
\end{equation}
since the sum is finite and each summand is bounded uniformly in $X$.

At a place $v$ of good reduction, \eqref{eq:aE-hat-def} gives
\begin{equation}\label{eq:a_E(p^2)_identity}
    \widehat{a_E}(\p_v^2)=\alpha_v^2+\beta_v^2=(\alpha_v+\beta_v)^2-2\alpha_v\beta_v=a_v(E)^2-2q_v.
\end{equation}
Applying \eqref{eq:finite_places_ignorable} with $S$ the (finite) set of places
at which $E$ has bad reduction, and using \eqref{eq:a_E(p^2)_identity}, the $k=2$ term in the sum \eqref{eq:sum_ideals_to_sum_prime_powers} equals
\begin{salign}\label{eq:k=2_sum}
    & \sum_{v\in \Val_0(K)} \frac{ \widehat{a_E}(\p_v^2) \log(q_v)}{q_v^2}\cdot\widehat{\phi}\left(\frac{ 2\log(q_v)}{\log(X)}\right)\\
    &\qquad =\sum_{\substack{v\in \Val_0(K)\\ E \text{ good reduction at } v}} \frac{ (a_v(E)^2-2q_v) \log(q_v)}{q_v^2}\cdot\widehat{\phi}\left(\frac{ 2\log(q_v)}{\log(X)}\right) + O(1).
\end{salign}
We write $a_v(E)^2-2q_v = (a_v(E)^2-q_v)-q_v$. The $-q_v$ part is an
elementary main term, treated in Lemma~\ref{lem:k=2_1}; the
$a_v(E)^2-q_v$ part is governed by the symmetric square $L$-function, treated in Proposition~\ref{prop:S2_estimate}.

\begin{lemma}\label{lem:k=2_1}
    Maintaining the notation of Proposition~\ref{prop:modified-expicit-formula},
    we have
    \begin{equation}
        \sum_{v\in \Val_0(K)} \frac{ -q_v \log(q_v)}{q_v^2}\cdot\widehat{\phi}\left(\frac{ 2\log(q_v)}{\log(X)}\right)
        =-\frac{\phi(0)}{4}\log(X) + O_{\phi,K}\left(1\right).
    \end{equation}
\end{lemma}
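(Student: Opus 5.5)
The sum to evaluate is $-\sum_v \frac{\log q_v}{q_v}\widehat{\phi}\bigl(\frac{2\log q_v}{\log X}\bigr)$. We split it into the contribution of degree-one primes and that of all remaining places.

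\textbf{Higher-degree places.} A place with $q_v=p^f$, $f\geq 2$, satisfies $\frac{\log q_v}{q_v}\leq \frac{2\log p}{p^{2}}$ up to a factor depending only on $f\leq d$. There are at most $d$ places above each rational prime $p$. Hence these places contribute at most
\[
\|\widehat{\phi}\|_\infty\, d^2\sum_p \frac{\log p}{p^2}=O_{\phi,K}(1).
\]
So it suffices to evaluate
\[
\sum_{\mathrm{N}\mathfrak{p}\ \text{prime}} \frac{\log \mathrm{N}\mathfrak{p}}{\mathrm{N}\mathfrak{p}}\,\widehat{\phi}\Bigl(\frac{2\log \mathrm{N}\mathfrak{p}}{\log X}\Bigr).
\]
Equivalently, we may evaluate the sum over all prime ideals, since we have just shown that the non-degree-one ideals contribute $O(1)$.

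\textbf{Partial summation.} Define $\theta_K(t)=\sum_{q_v\leq t}\log q_v$. The prime ideal theorem gives $\theta_K(t)=t+O_K\bigl(t\,e^{-c\sqrt{\log t}}\bigr)$; we only need $\theta_K(t)=t+O(t/\log^2 t)$. Write $g(t)=\widehat{\phi}(2\log t/\log X)/t$. Then the sum equals $\int_{2^-}^\infty g(t)\,d\theta_K(t)$, which splits into a main term $\int_2^\infty g(t)\,dt$ and an error term $\int g(t)\,d(\theta_K(t)-t)$.

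Substituting $u=\log t/\log X$ in the main term gives
\[
\log X\int_{\log 2/\log X}^\infty \widehat{\phi}(2u)\,du=\frac{\log X}{2}\int_0^\infty\widehat{\phi}(w)\,dw+O(1).
\]
Since $\phi$ is even, $\widehat{\phi}$ is even and real, so $\int_0^\infty\widehat{\phi}=\tfrac12\int_\R\widehat{\phi}=\tfrac12\phi(0)$ by Fourier inversion. The main term is therefore $\frac{\phi(0)}{4}\log X$, and with the overall minus sign this is the claimed $-\frac{\phi(0)}{4}\log X$.

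\textbf{Error term and main obstacle.} We integrate by parts in the error term. The boundary terms are $O(1)$. What remains is
\[
\int |\theta_K(t)-t|\,|g'(t)|\,dt,
\]
and $g'(t)=-\widehat{\phi}(\cdot)/t^2+\frac{2}{\log X}\,\widehat{\phi}'(\cdot)/t^2$. Against $|\theta_K-t|\ll t/\log^2 t$, the first piece gives $\int \frac{dt}{t\log^2 t}=O(1)$. The second piece gives
\[
\frac{1}{\log X}\int \frac{|\widehat{\phi}'(2\log t/\log X)|}{t\log^2 t}\,dt=O(1/\log X).
\]
This step is the main obstacle, because it needs $\widehat{\phi}$ to be bounded and differentiable with $\widehat{\phi}'$ bounded. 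That follows from the hypotheses on $\phi$: analyticity in the strip $|\Im s|\leq \tfrac12+\epsilon$ together with the decay $(1+|s|)^{-1-\delta}$ gives $x\phi(x)$ control. If only $\widehat{\phi}$ of bounded variation is available, a Stieltjes bound suffices in its place. Either way the total error is $O_{\phi,K}(1)$.

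Alternatively, under the support condition typically imposed later ($\widehat{\phi}$ compactly supported), all integrals are over $t\leq X^{C}$ and the estimate is immediate.
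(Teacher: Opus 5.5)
Your proposal is correct and is essentially the paper's argument: Abel summation against a prime-ideal-theorem asymptotic, the substitution $u=2\log t/\log X$, and $\int_{\R}\widehat{\phi}=\phi(0)$ together with evenness of $\widehat{\phi}$. The only differences are these: you sum against $\theta_K(t)=t+O(t/\log^2 t)$ with weight $\widehat{\phi}(2\log t/\log X)/t$, whereas the paper sums against the Mertens-type estimate $\sum_{q_v\le x}\log(q_v)/q_v=\log x+O_K(1)$ with weight $\widehat{\phi}(2\log t/\log X)$, so its error is just $O(1)$ times the total variation of $\widehat{\phi}$; your preliminary reduction to degree-one primes is unnecessary, as you note yourself; and both versions need $\widehat{\phi}$ of bounded variation, which the paper also simply asserts, though your justification is loose, since the decay $(1+|s|)^{-1-\delta}$ alone does not make $x\phi(x)$ integrable when $\delta\le 1$ (the regularity clearly holds for the Fej\'er-type test function used later).
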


\begin{proof}
    It suffices to evaluate
    \begin{equation}\label{eq:simlification}
        \sum_{v\in \Val_0(K)} \frac{\log(q_v)}{q_v}\cdot\widehat{\phi}\left(\frac{2\log(q_v)}{\log(X)}\right).
    \end{equation}
    We apply Abel summation with the weights
    $\left(\tfrac{\log(q_v)}{q_v}\right)_{v\in \Val_0(K)}$ and the function 
    \begin{equation}
        f(t)\defeq\widehat{\phi}\!\left(\tfrac{2\log(t)}{\log(X)}\right).
    \end{equation}
    By the prime ideal theorem, the partial sums satisfy
    \begin{equation}
        \Psi(x) \defeq \sum_{q_v\leq x} \frac{\log(q_v)}{q_v} = \log(x) + O_K(1).
    \end{equation}
    Hence
    \begin{salign}
        \sum_{v\in \Val_0(K)} \frac{ \log(q_v)}{q_v}\, f(q_v)
        &= \int_2^\infty f(t)\, d\Psi(t)
        =\int_2^\infty \frac{f(t)}{t}\, dt
        + O_K\left(\int_2^\infty |f'(t)|\, dt\right).
    \end{salign}
    Making the substitution $u=\tfrac{2\log(t)}{\log(X)}$, we have 
    \begin{equation}
        \int_2^\infty \frac{f(t)}{t}\, dt
        = \frac{\log(X)}{2}\int_{u_0}^\infty \widehat{\phi}(u)\, du,
        \quad \text{ where }\ u_0=\tfrac{2\log 2}{\log(X)}=o(1).
    \end{equation}
    Since $\widehat{\phi}$ is even (because $\phi$ is even), integrable, and
    $\int_\R \widehat{\phi}(u)\, du=\phi(0)$ by Fourier inversion, we have
    \begin{equation}
        \int_{u_0}^\infty \widehat\phi(u)\,du = \tfrac12\int_\R\widehat\phi(u)\,du + O_K(u_0)
    = \tfrac{\phi(0)}{2}+O_K\!\left(\tfrac{1}{\log(X)}\right),
    \end{equation}
    and therefore
    \begin{equation}\label{eq:log(X)phi(0)/4}
        \frac{\log(X)}{2}\int_{u_0}^\infty \widehat{\phi}(u)\, du
        = \frac{\phi(0)}{4}\log(X) + O_K(1).
    \end{equation}
    For the error term, the hypotheses on $\phi$ imply the decay
    \begin{equation}\label{eq:phihat_decay}
        |\widehat{\phi}(t)| \ll \exp(-\pi(1+2\epsilon)|t|) \quad \text{ and } \quad 
        |\widehat{\phi}'(t)|\ll \exp(-\pi(1+2\epsilon)|t|). 
    \end{equation}
   Therefore, again substituting $u=\tfrac{2\log t}{\log(X)}$,
    \begin{equation}\label{eq:f'_integral}
        \int_2^\infty |f'(t)|\, dt
        =  \frac{1}{2}\int_{u_0}^\infty |\widehat\phi{}'(u)|\, du
        = O_\phi(1).
    \end{equation}
    Combining equations \eqref{eq:simlification},
    \eqref{eq:log(X)phi(0)/4}, and \eqref{eq:f'_integral}, and multiplying by
    $-1$, gives the desired expression.
\end{proof}

It remains to analyze the sum
\begin{equation}\label{eq:S2-def}
    S_{2}(X)\defeq \sum_{v\in \Val_0(K)} \frac{ (a_v(E)^2-q_v) \log(q_v)}{q_v^2}\cdot\widehat{\phi}\left(\frac{ 2\log(q_v)}{\log(X)}\right),
\end{equation}
which we relate to the symmetric square $L$-function
$L(\Sym^2(E/K),s)$.

For each $v\in\Val_0(K)$ define the local factor
\begin{equation}
    L_v(\Sym^2(E/K), T) \defeq 
    \begin{cases}
        (1-\alpha_v^2 T) (1- \alpha_v\beta_v T) (1-\beta_v^2 T) & \text{ if $E$ has good reduction at $v$,}\\
        (1-T)(1-b_v T) & \text{ if $E$ has mult. reduction at $v$,}\\
        1-\epsilon_v T & \text{ if $E$ has add. reduction at $v$,}
    \end{cases}
\end{equation}
where $\epsilon_v\in \{\pm 1\}$.\footnote{The sign $\epsilon_v$ will not be consequential for our analysis, and thus we refrain from giving its precise definition.}
The symmetric square $L$-function is the Euler product
\begin{equation}
L(\Sym^2(E/K),s)
\defeq
\prod_{v\in \Val_0(K)}
L_v(\Sym^2(E/K),q_v^{-s})^{-1},
\end{equation}
which converges absolutely for $\RE(s)>2$.

Define the coefficients
$\lambda_{\Sym^2}(\a)$ by the logarithmic derivative expansion
\begin{equation}\label{eq:L'/L_Sym^2}
    -\frac{L'}{L}(\Sym^2(E/K),s)
    =
    \sum_{\a\subseteq\cO_K}
    \frac{\lambda_{\Sym^2}(\a)\,\Lambda_K(\a)}
    {N_{K/\Q}(\a)^s}.
\end{equation}
At a non-zero prime ideal $\p_v$ of good reduction and for
$k\in\Z_{\geq 1}$, the local Euler factor gives
\begin{equation}\label{eq:lambda-sym2_prime_power}
    \lambda_{\Sym^2}(\p_v^k)
    =
    \alpha_v^{2k}+\alpha_v^k\beta_v^k+\beta_v^{2k}
    =
    \alpha_v^{2k}+q_v^k+\beta_v^{2k}.
\end{equation}
%The coefficients at the finitely many places of bad reduction are likewise determined by the corresponding local Euler factors. 
In particular, when $k=1$,
\begin{equation}\label{eq:lambda-sym2}
    \lambda_{\Sym^2}(\p_v)
    =
    \alpha_v^{2}+\alpha_v\beta_v+\beta_v^{2}
    =
    \alpha_v^2+q_v+\beta_v^2
    =
    a_v(E)^2-q_v,
\end{equation}
which is exactly the numerator appearing in the sum
\eqref{eq:S2-def}.

\begin{comment}
Let $\lambda_{\Sym^2}(\a)$ denote the Dirichlet coefficients of $L(\Sym^2(E/K),s)$, so that for $\RE(s)>2$
\begin{equation}
    L(\Sym^2(E/K), s)=\sum_{\a\subseteq \cO_K} \frac{\lambda_{\Sym^2}(\a)}{N_{K/\Q}(\a)^s}.
\end{equation}
For a non-zero prime ideal $\p_v$ of good reduction and $k\in \Z_{\geq 1}$,
\begin{equation}\label{eq:lambda-sym2_prime_power}
    \lambda_{\Sym^2}(\p_v^k)=\alpha_v^{2k} + \alpha_v^k\beta_v^k + \beta_v^{2k}=\alpha_v^{2k}+q_v^k + \beta_v^{2k}.
\end{equation}
In particular, when $k=1$, 
\begin{equation}\label{eq:lambda-sym2}
    \lambda_{\Sym^2}(\p_v)=\alpha_v^{2} + \alpha_v\beta_v + \beta_v^{2}=\alpha_v^2+q_v+\beta_v^2=a_v(E)^2-q_v,
\end{equation}
which is exactly the numerator appearing in the sum \eqref{eq:S2-def}. The
logarithmic derivative therefore satisfies
\begin{equation}\label{eq:L'/L_Sym^2}
    -\frac{L'}{L}(\Sym^2(E/K),s)
    = \sum_{\a\subseteq \cO_K} \frac{\lambda_{\Sym^2}(\a)\,\Lambda_K(\a)}{N_{K/\Q}(\a)^{s}}
    = \sum_{k=1}^\infty \sum_{v\in \Val_0(K)} \frac{\lambda_{\Sym^2}(\p_v^k)\log(q_v)}{q_v^{ks}}.
\end{equation}
\end{comment}

Define
\begin{equation}\label{eq:deltaE-def}
\delta_E\defeq
\begin{cases}
1 & \text{if $E$ has CM defined over $K$},\\
0 & \text{otherwise}.
\end{cases}
\end{equation}

\begin{proposition}\label{prop:S2_estimate}
    Assume that $E$ is modular and that the Generalized Riemann Hypothesis
    holds for $L(\Sym^2(E/K), s)$.
    Then
    \begin{equation}\label{eq:S2-estimate}
        S_2(X) = \delta_E\,\frac{\phi(0)}{4}\log(X) + O_{\phi,K}\big(\log\log(A_{E/K})\big).
    \end{equation}
\end{proposition}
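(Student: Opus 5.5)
The plan is to apply an explicit formula to $L(\Sym^2(E/K),s)$, in exact analogy with Proposition~\ref{prop:modified-expicit-formula}, but with the roles of the two sides reversed. Write
\[
S_2(X)=\sum_{v\in \Val_0(K)}\frac{\lambda_{\Sym^2}(\p_v)\log q_v}{q_v^{2}}\,\widehat\phi\!\left(\frac{2\log q_v}{\log X}\right).
\]
By \eqref{eq:lambda-sym2}, this identity holds up to the finitely many bad places, which cost $O(1)$ by \eqref{eq:finite_places_ignorable}. Thus $S_2(X)$ is the prime part of the smoothed sum attached to $-\frac{L'}{L}(\Sym^2(E/K),s)$ at the edge point $s=2$, at scale $X^{1/2}$.

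The prime-power terms $k\ge 2$ are harmless. By \eqref{eq:lambda-sym2_prime_power} and the Hasse bound, $|\lambda_{\Sym^2}(\p_v^k)|\le 3q_v^k$. Hence the $k\ge2$ terms contribute at most $\sum_v\sum_{k\ge2}3\log q_v\,q_v^{-k}\|\widehat\phi\|_\infty=O_K(1)$, exactly as in Lemma~\ref{lem:k>=3}. Consequently $S_2(X)$ equals, up to $O(1)$, the full ideal sum $\sum_{\a}\lambda_{\Sym^2}(\a)\Lambda_K(\a)N(\a)^{-2}\widehat\phi(\log N(\a)/\log X^{1/2})$.

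The key step is the explicit formula for $L(\Sym^2(E/K),s)$. By modularity and Gelbart--Jacquet, generalized to $K$, this $L$-function is automorphic: it is cuspidal on $\GL_3$ when $E$ has no CM over $K$. When $E$ has CM by a field $F\subseteq K$ with Hecke character $\psi$, it factors as $L(\psi^2,s)\,\zeta_K(s-1)$, where the factor $\zeta_K(s-1)$ comes from $\alpha_v\beta_v=q_v$; in this case there is a pole at $s=2$. I will run the standard contour/Weil explicit formula with test function $\phi$ at scale $\log X^{1/2}=\tfrac12\log X$. The zeros $\rho=\tfrac32+i\gamma$ on the critical line, which lie there by GRH, contribute $\sum_\gamma\phi(\gamma\log X/(4\pi))$. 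Because of the normalization at $s=2$ versus $s=3/2$, these terms enter with the weight $e^{\pm(1/2)\cdot}$ shift, i.e.\ through $\widehat\phi$ evaluated at imaginary shifts. The analyticity of $\phi$ in $|\Im s|\le \tfrac12+\epsilon$ is what justifies the shift and makes each zero's contribution decay like $(1+|\gamma|\log X)^{-1-\delta}$ times $X^{-1/4}$-type factors.

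The main obstacle is bounding the zero sum and the conductor term uniformly, so as to obtain $O(\log\log A_{E/K})$ rather than $O(\log A_{E/K}/\log X)$. My plan for this step is to avoid a full explicit formula. Instead I will bound the prime sum directly from the GRH estimate
\[
\sum_{N(\a)\le x}\lambda_{\Sym^2}(\a)\Lambda_K(\a)\,N(\a)^{-1}=\delta_E\,x+O\big(x^{1/2}\log^2(xA_{E/K})\big),
\]
which follows from the standard GRH prime-number theorem for the automorphic $L$-function, with analytic conductor $\ll A_{E/K}^{O(1)}$. I will then use partial summation against the weight $N(\a)^{-1}\widehat\phi(2\log N(\a)/\log X)$. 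The main term $\delta_E\int \widehat\phi(2\log t/\log X)\,dt/t$ gives $\delta_E\frac{\phi(0)}{4}\log X$, by the same computation as in Lemma~\ref{lem:k=2_1}. The error term is more delicate. For $t\ge (\log A_{E/K})^{C}$, it is $\int t^{-3/2}\log^2(tA)\,dt$, which is $O(1)$ there. For small $t\le(\log A_{E/K})^C$, I will instead bound trivially by $3\sum_{q_v\le (\log A)^C}\log q_v/q_v\ll\log\log A_{E/K}$, via the prime ideal theorem. Splitting at $t=(\log A_{E/K})^{C}$ and combining the two ranges gives \eqref{eq:S2-estimate}.
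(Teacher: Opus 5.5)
Your proof is correct. It shares the paper's skeleton: the GRH prime number theorem for $L(\Sym^2(E/K),s)$, with main term coming from the pole at $s=2$ (present exactly when $E$ has CM over $K$); partial summation against $\widehat\phi(2\log t/\log X)$; and the main-term evaluation of Lemma~\ref{lem:k=2_1}.

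\textbf{The genuine difference: uniformity in $A_{E/K}$.}

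\emph{The paper's route.} It proves Lemma~\ref{lem:PNT-sym2} in the form $\Theta(x)=\delta_E'\log x+c_E+O(x^{-1/2}\log^2(A_{E/K}x))$. It identifies $c_E$, up to a bounded term, with the regular part of $-\frac{L'}{L}(\Sym^2(E/K),s)$ at $s=2$. It bounds that by the edge-of-strip estimate \cite[Theorem 5.17]{IK04}, and then integrates $\Theta$ against $f'$.

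\emph{Your route.} You split at $T_0=(\log A_{E/K})^{C}$. Below $T_0$ you use only $|\lambda_{\Sym^2}(\p_v)|\le 3q_v$ and Mertens. Above $T_0$ you use the GRH error term, which costs $O(T_0^{-1/2}\log^2(T_0A_{E/K}))=O(1)$ once $C\ge 4$ (you should state this choice of $C$).

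\emph{What each buys.} Your splitting is exactly how the edge bound for $\frac{L'}{L}$ is proved, so your argument is self-contained and needs no appeal to that theorem. It also explicitly handles the range $x<(\log A_{E/K})^4$. There the error term in \eqref{eq:Theta-preliminary} exceeds $\log\log A_{E/K}$, and the paper's passage to the final form of Lemma~\ref{lem:PNT-sym2} tacitly needs the same trivial bound $|\Theta(x)|\ll\log x$. In return, the paper's route gives an asymptotic for the partial sums $\Theta(x)$ that can be reused against any weight.

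\textbf{Corrections to your setup.} None of these affects the final argument.
\begin{itemize}
\item When $E$ has potential CM not defined over $K$, $L(\Sym^2(E/K),s)$ is not cuspidal on $\GL_3$. By Lemma~\ref{lem:PCM-factorization} it equals $L(\eta_{M/K},s-1)\,L(\Ind_M^K(\psi_{E/K}^2),s)$ up to finitely many Euler factors. It is still pole-free at $s=2$ because $\eta_{M/K}$ is nontrivial, so your main term $\delta_E x$ remains right.
\item When the CM is defined over $K$, the factorization is $L(\psi^2,s)L(\overline{\psi}^2,s)\zeta_K(s-1)$, not $L(\psi^2,s)\zeta_K(s-1)$.
\item Drop the paragraph on the contour/Weil explicit formula. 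Moving from the weight $N(\a)^{-3/2}$ to $N(\a)^{-2}$ at scale $\tfrac12\log X$ is an imaginary shift of size $\log X/(8\pi)$ in the argument of $\phi$. That is not a shift inside $|\Im s|\le\tfrac12+\epsilon$, so analyticity there does not justify it. In any case, your partial-summation argument never uses it.
\end{itemize}
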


The proof splits into cases, depending on whether $E$ has potential complex multiplication (PCM) or not. We treat the non-PCM case first, where $L(\Sym^2(E/K),s)$ is itself a nice cuspidal $L$-function, and
then reduce the PCM case to Hecke $L$-functions via a factorization of
$L(\Sym^2(E/K),s)$.

\medskip
\noindent\textbf{The non-PCM case.}
Suppose $E$ does not have PCM. By Serre's Open Image Theorem
\cite[IV-11]{Ser68}, the adelic Galois representation
\begin{equation}
    \rho_E: \Gal(\overline{K}/K) \to \GL_2(\widehat{\Z})
\end{equation}
has open image; in particular $\rho_E$ is non-dihedral. It then follows from
the work of Gelbart and Jacquet \cite[\S 3 and (9.3) Theorem]{GJ78} that,
since $E$ is modular and non-PCM, the symmetric square
$L(\Sym^2(E/K),s)$ is the $L$-function of a cuspidal automorphic representation of $\GL_3$, and is therefore entire. 
The Hasse bound $|\alpha_v|=|\beta_v|=\sqrt{q_v}$ shows that this representation satisfies the Ramanujan--Petersson bound at every unramified place. Hence, by \cite[Theorem 5.17]{IK04}, under GRH for $L(\Sym^2(E/K),s)$, for $\frac{3}{2}\le \RE(s)\le 2$, we have
\begin{equation}\label{eq:L'/L_Sym^2_bound}
    \left|\frac{L'}{L}(\Sym^2(E/K), s)\right| \ll_K \log\log(A_{E/K}),
\end{equation}
where we have used that the analytic conductor of
$\Sym^2(E/K)$ is polynomially bounded in $A_{E/K}$.

\medskip
\noindent\textbf{The PCM case.}
Now suppose $E$ has PCM. We associate to $E$ a quadratic $K$-algebra $M$ as
follows: if $E$ has CM defined over $K$, set $M\defeq K\times K$; otherwise let
$M$ be the (unique) quadratic extension of $K$ over which $E$ acquires CM. Let
$\eta_{M/K}$ be the quadratic Hecke character of $K$ attached to $M/K$ (the
trivial character when $M=K\times K$), and let $\psi_{E/K}$ denote the
associated Gr\"ossencharacter. By a theorem of Deuring (see, e.g.,
\cite[\S 2 Theorem 10.5]{Sil94}),
\begin{equation}\label{eq:deuring}
    L(E/K, s) \cong L(\Ind_M^K(\psi_{E/K}), s) \cong
    \begin{cases}
        L(\psi_{E/M}, s)\, L(\overline{\psi_{E/M}}, s) & \text{ if } M\cong K\times K, \\
        L(\psi_{E/M}, s) & \text{ otherwise;}
    \end{cases}
\end{equation}
in particular $\rho_E$ is dihedral when $E$ has PCM. In this case $L(\Sym^2(E/K),s)$ factorizes as a product of Hecke $L$-functions.

\begin{lemma}\label{lem:PCM-factorization}
    Let $E$ be an elliptic curve over $K$ with PCM, and let $M$ and
    $\eta_{M/K}$ be as above. Then there is a finite product of local Euler
    factors $C_E(s)$ such that
    \begin{equation}\label{eq:PCM-factorization}
        L(\Sym^2(E/K), s) = C_E(s)\, L(\eta_{M/K}, s-1)\, L(\Ind_M^K(\psi_{E/K}^2), s).
    \end{equation}
\end{lemma}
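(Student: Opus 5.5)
The plan is to compare Euler factors place by place, working at the level of $\ell$-adic Galois representations and using Deuring's theorem \eqref{eq:deuring}. Fix a prime $\ell$ and let $V=V_\ell(E)$ be the $2$-dimensional Tate module representation of $G_K$. Since $E$ has PCM, $V\cong \Ind_{G_M}^{G_K}\chi$ for the $\ell$-adic character $\chi$ attached to $\psi_{E/M}$. In the split case $M=K\times K$ this is to be read as $\chi\oplus\overline{\chi}$.

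The key step is the identity of $G_K$-representations
\begin{equation*}
\Sym^2\big(\Ind_{G_M}^{G_K}\chi\big)\cong \big(\eta_{M/K}\otimes\det V\big)\oplus \Ind_{G_M}^{G_K}(\chi^2).
\end{equation*}
To prove it, write $\sigma$ for the nontrivial element of $G_K/G_M$. On $G_M$ the symmetric square restricts to $\chi^2\oplus\chi\chi^\sigma\oplus(\chi^\sigma)^2$. The line spanned by $e_1e_2$, where $e_1,e_2$ is an induced basis, is $G_K$-stable, and the complementary span of $e_1^2,e_2^2$ is the induction of $\chi^2$. One identifies the character on $e_1e_2$ by comparing with $\det V=\det(\Ind\chi)=\eta_{M/K}\cdot(\chi|_{K^\times})$. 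Indeed, $\det$ of an induced representation carries the sign character $\eta_{M/K}$ of the permutation action, while $\sigma$ acts on $e_1e_2$ without that sign; so the line is $\eta_{M/K}\det V$. Since $\det V$ is the cyclotomic character, its $L$-function shifts $s\mapsto s-1$, and the line contributes $L(\eta_{M/K},s-1)$. In the split case, $\eta$ is trivial and the line is $\chi\overline\chi=$ cyclotomic, so the formula specializes correctly.

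Next I would compare Euler factors. At places $v$ of good reduction unramified in $M$, take $L$-functions of both sides. On the left this gives $(1-\alpha_v^2T)(1-q_vT)(1-\beta_v^2T)$. On the right, the line contributes $(1-\eta_{M/K}(v)q_vT)$. The induced piece contributes $(1-\alpha_v^2T)(1-\beta_v^2T)$ if $v$ splits in $M$, where the Frobenius eigenvalues are $\psi(w),\psi(\bar w)$. If $v$ is inert, it contributes $1-\psi(v)T^2$ at $q_v^2$-level, i.e.\ $(1-\alpha_v^2T)(1-\beta_v^2T)$ with $\beta_v=-\alpha_v$.

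I expect the main obstacle to be the inert case, where the $T$-coefficient must match. Here $a_v=0$, so $\alpha_v^2=\beta_v^2=-q_v$, and the left side is $(1+q_vT)^2(1-q_vT)$. On the right, $\eta(v)=-1$ gives $(1+q_vT)$, and $\Ind(\chi^2)$ has Frobenius eigenvalues $\pm\sqrt{\psi(v)^2}$. Since $\psi(v)=-q_v$ up to the normalization of the Grössencharacter on inert primes, $\chi^2(\mathrm{Frob}_w)=q_v^2$, and the eigenvalues of $\Ind(\chi^2)$ at Frobenius are $\pm q_v$. This gives $(1-q_vT)(1+q_vT)$, so the product matches. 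I would double-check this sign bookkeeping carefully, as it is where errors are most likely.

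The remaining places form a finite set $S$: bad reduction of $E$, ramification in $M/K$, and places above $\ell$ (vary $\ell$ to remove these). At each place of $S$, both sides have Euler factors that are polynomials in $q_v^{-s}$ with nonzero constant term. So their ratio is a finite product of local Euler factors (or inverses thereof), which defines $C_E(s)$. Because $\Sym^2$ commutes with restriction to inertia up to semisimplification issues only at these finitely many places, nothing outside $S$ is affected, and \eqref{eq:PCM-factorization} follows.
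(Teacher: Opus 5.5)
Your proposal is correct and takes essentially the same route as the paper. You decompose $\Sym^2\Ind_{G_M}^{G_K}\chi\cong\Ind_{G_M}^{G_K}(\chi^2)\oplus(\eta_{M/K}\otimes\det V)$ via the basis $e_1^2,e_1e_2,e_2^2$, identify $\det V$ with the cyclotomic character so that the line contributes $L(\eta_{M/K},s-1)$, and absorb the finitely many remaining places into $C_E(s)$.

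There is one small slip. Your first description of the inert-place factor of $\Ind(\chi^2)$ as $(1-\alpha_v^2T)(1-\beta_v^2T)$ with $\beta_v=-\alpha_v$ is wrong, since that product equals $(1+q_vT)^2$. Your follow-up computation giving $(1-q_vT)(1+q_vT)$ is the correct one. In any case, this explicit check is unnecessary: isomorphic representations have identical Frobenius characteristic polynomials at unramified places.
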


The proof of Lemma~\ref{lem:PCM-factorization} will use the following
representation-theoretic identity.

\begin{lemma}
\label{lem:induced-sym-square}
Let $G$ be a group, let $H\triangleleft G$ be a subgroup of index $2$, let
\begin{equation}
    \eta:G\to\{\pm 1\}
\end{equation}
be the quadratic character with kernel $H$, let $F$ be a field of characteristic not equal to $2$, and let 
\begin{equation}
    \psi:H\to F^\times
\end{equation} 
be a character. 
Then
\begin{equation}
\Sym^2 \Ind_H^G(\psi)
\simeq
\Ind_H^G(\psi^2)\oplus \left(\eta\otimes\det \left(\Ind_H^G(\psi)\right)\right).
\end{equation}
\end{lemma}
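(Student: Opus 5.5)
We will prove the identity by writing down an explicit basis of $V\defeq\Ind_H^G(\psi)$ and decomposing $\Sym^2 V$ by hand. Fix $g_0\in G\setminus H$, so $G=H\sqcup g_0H$. Then $V$ is $2$-dimensional with basis $e_1,e_2$, where $e_1$ spans the $\psi$-line and $e_2=g_0e_1$. With respect to this basis, $h\in H$ acts by $\mathrm{diag}(\psi(h),\psi^{g_0}(h))$, where $\psi^{g_0}(h)\defeq\psi(g_0^{-1}hg_0)$. An element $g_0h$ of $G\setminus H$ acts by an antidiagonal matrix: it sends $e_1\mapsto \psi(h)e_2$ and $e_2\mapsto \psi^{g_0}(h)\,\psi(g_0^2)\,e_1$, up to the precise bookkeeping with $g_0^2\in H$. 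Consequently $\det V$ restricted to $H$ is $\psi\psi^{g_0}$. On $G\setminus H$, the determinant is the negative of the product of the two antidiagonal entries.

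Now $\Sym^2 V$ has basis $e_1^2,\ e_1e_2,\ e_2^2$, and we split it into two pieces.

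\textbf{The plane $W=\langle e_1^2,e_2^2\rangle$.} This plane is $G$-stable, since $H$ acts diagonally on it by $\psi^2$ and $(\psi^{g_0})^2=(\psi^2)^{g_0}$, while $g_0$ swaps the two lines. The line $Fe_1^2$ is $H$-stable with character $\psi^2$ and generates $W$ as a $G$-module. Frobenius reciprocity therefore gives a nonzero map $\Ind_H^G(\psi^2)\to W$, and it is surjective since its image contains the generator $e_1^2$. Comparing dimensions, this map is an isomorphism, so $W\simeq\Ind_H^G(\psi^2)$.

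\textbf{The line $L=Fe_1e_2$.} This line is $G$-stable. An element $h\in H$ acts on it by $\psi(h)\psi^{g_0}(h)=\det V(h)$. An element $g\notin H$ acting antidiagonally with entries $a,b$ sends $e_1e_2\mapsto ab\,e_1e_2$, whereas $\det V(g)=-ab$. So $L$ affords the character $\eta\cdot\det V$. Finally, $\Sym^2V=W\oplus L$ as vector spaces, and both summands are subrepresentations, which gives the decomposition. The hypothesis $\mathrm{char}\,F\ne 2$ is used to identify $\Sym^2$ with the span of the monomials $e_ie_j$ as a quotient or subspace without ambiguity.

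The only delicate point is the sign computation for $g\notin H$: the determinant of an antidiagonal $2\times2$ matrix is $-ab$, while its action on $e_1e_2$ is by $ab$. This discrepancy is exactly the twist by $\eta$, and we would double-check it with $\psi$ trivial. In that case $V=1\oplus\eta$, so $\Sym^2V=1\oplus1\oplus\eta$. The formula predicts $\Ind(1)\oplus(\eta\otimes\eta)=1\oplus\eta\oplus1$, which agrees.
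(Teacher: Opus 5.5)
Your proof is correct and follows essentially the same route as the paper: you fix $g_0\in G\setminus H$, take the basis $e_1$, $e_2=g_0e_1$, split $\Sym^2 V$ into the $G$-stable plane $\langle e_1^2,e_2^2\rangle\simeq\Ind_H^G(\psi^2)$ and the line $\langle e_1e_2\rangle$, and read off the $\eta$-twist from the fact that an antidiagonal element acts on $e_1e_2$ by $ab$ while its determinant is $-ab$. Two of your steps make explicit points the paper treats quickly: the Frobenius reciprocity argument for the plane, and the choice $e_2=g_0e_1$, which stays valid even when $\psi=\psi^{g_0}$ and the paper's ``two $H$-eigenlines'' are not canonical.
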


\begin{proof}
Choose an element $\tau\in G\setminus H$.  Since $H$ has index $2$, it is normal.  Define the character
\begin{salign}
    \psi^\tau: H &\to F^\times\\
        h &\mapsto \psi(\tau^{-1}h\tau).
\end{salign}
Define $V\defeq \Ind_H^G(\psi)$. Then $\dim_F V=2$ and
\begin{equation}
    V|_H\simeq \psi\oplus\psi^\tau.
\end{equation} 
Choose a basis $e_1,e_2$ of $V$ such that, for $h\in H$,
\begin{equation}
    h e_1=\psi(h)e_1,\qquad \text{ and }\qquad h e_2=\psi^\tau(h)e_2.
\end{equation}
Since $\tau$ swaps the two $H$-eigenlines, we may rescale so that
\begin{equation}\label{eq:tau_action}
    \tau\cdot e_1=e_2,\qquad \tau\cdot e_2=a\, e_1
\end{equation}
for some $a\in F^\times$. Thus, in the basis $e_1$, $e_2$, 
\begin{equation}
    V(\tau)=\left(\begin{matrix}0&a\\1&0\end{matrix}\right).
\end{equation}

Now consider $\Sym^2 V$ with the basis
\begin{equation}
e_1^2,\qquad e_1e_2,\qquad e_2^2.
\end{equation}
The subspace
$\langle e_1^2,e_2^2\rangle$
is stable under $G$.  Indeed, for $h\in H$,
\begin{equation}
h e_1^2=h(e_1) h(e_1) = \psi(h)^2 e_1^2,\qquad
h e_2^2= h(e_2)h(e_2)=(\psi^\tau(h))^2e_2^2,
\end{equation}
and by \eqref{eq:tau_action} $\tau$ interchanges the lines $\langle e_1^2\rangle$ and $\langle e_2^2\rangle$ up to scaling. 
Hence $\langle e_1^2,e_2^2\rangle\simeq \Ind_H^G(\psi^2)$.

We now consider the remaining line, $\langle e_1e_2\rangle$.  For $h\in H$,
\begin{equation}
h(e_1e_2)= h(e_1) h(e_2) =\psi(h)\psi^\tau(h)e_1e_2
\end{equation}
and 
\begin{equation}
    \tau(e_1e_2)=(\tau e_1)(\tau e_2)=e_2(ae_1)=a e_1e_2.
\end{equation}
Therefore $\langle e_1e_2\rangle$ is $G$-stable. 
Additionally,
\begin{equation}
\det (V)(h)=\psi(h)\psi^\tau(h) \text{ and } 
\det (V)(\tau)
=
\det
\begin{pmatrix}
0&a\\
1&0
\end{pmatrix}
=-a,
\end{equation}
and $\eta(h)=1$ and $\eta(\tau)=-1$.
Therefore $H$ acts on $\langle e_1e_2\rangle$ through
$(\eta\otimes\det V)|_H$,
and $G$ acts on $\langle e_1 e_2\rangle$ by the character $\eta\otimes \det(V)$. 
 Thus
\begin{equation}
\Sym^2 V
=
\langle e_1^2,e_2^2\rangle\oplus \langle e_1e_2\rangle
\simeq
\Ind_H^G(\psi^2)\oplus(\eta\otimes\det V).
\end{equation}
\end{proof}

We now apply Lemma \ref{lem:induced-sym-square} to elliptic curves with PCM to prove Lemma \ref{lem:PCM-factorization}.

\begin{proof}[Proof of Lemma~\ref{lem:PCM-factorization}]
    Let $\ell$ be a prime, let
    \begin{equation}
        \rho_{E,\ell}: G_K \to \GL_2(\Q_\ell)
    \end{equation}
    be the $\ell$-adic Galois representation attached to $E$, and let
    \begin{equation}
        \chi_\ell: G_K \to \Z_\ell^\times.
    \end{equation}
    be the $\ell$-adic cyclotomic character.
    By properties of the Weil pairing (see, e.g., \cite[\S III.8]{Sil09}), $\det\rho_{E,\ell}=\chi_\ell$. Since $E$
    has PCM, $\rho_{E,\ell}\simeq \Ind_{M}^{K}(\psi_{E/K,\ell})$ (equation \ref{eq:deuring}), so applying Lemma~\ref{lem:induced-sym-square} with $G=G_K$,
    $H=G_M$, and $\psi=\psi_{E/K,\ell}$ gives
    \begin{equation}\label{eq:Sym2_rho_identity}
        \Sym^2(\rho_{E,\ell}) \cong
        \Ind_M^K(\psi_{E/K,\ell}^2)\;\oplus\;\big(\eta_{M/K}\otimes \chi_\ell\big).
    \end{equation}
    Let $v\in \Val_0(K)$ be a place at which $E$ has good reduction, $M/K$ is
    unramified, and all characters in \eqref{eq:Sym2_rho_identity} are
    unramified; note that all but finitely many places satisfy these conditions. 
    Let $\Frob_v\in G_K$ be a Frobenius element at $v$. Since $M/K$ is unramified at $v$, $\chi_\ell(\Frob_v)=q_v$, so that the Euler factor of
    $\eta_{M/K}\otimes\chi_\ell$ at $v$ is
    \begin{equation}
        \left(1 - \frac{\eta_{M/K}(v)\,q_v}{q_v^{s}}\right)^{-1}
        =\left(1 - \frac{\eta_{M/K}(v)}{q_v^{s-1}}\right)^{-1},
    \end{equation}
    which is precisely the Euler factor of $L(\eta_{M/K}, s-1)$ at $v$.
    Comparing Euler factors in \eqref{eq:Sym2_rho_identity} at all such $v$,
    the $L$-functions of $\Sym^2(E/K)$ and of
    $L(\eta_{M/K},s-1)\,L(\Ind_M^K(\psi_{E/K}^2),s)$ agree away from a finite
    set of places; the discrepancy at the remaining places is a finite product of Euler factors, which we may denote by $C_E(s)$. This gives the factorization \eqref{eq:PCM-factorization}.
\end{proof}

\begin{comment}
    We record, for use throughout this subsection, that the sum \eqref{eq:S2-def} defining $S_2(X)$ converges absolutely under the standing hypotheses on $\phi$ from the beginning of Section~\ref{sec:L-functions} (those preceding Proposition~\ref{prop:modified-expicit-formula}). Indeed, the Hasse bound gives $|\lambda_{\Sym^2}(\p_v)|=|a_v(E)^2-q_v|\le 3q_v$, so the summand of \eqref{eq:S2-def} is $O\!\big(\tfrac{\log(q_v)}{q_v}\,|\widehat\phi(\tfrac{2\log q_v}{\log X})|\big)$, and $\sum_v \tfrac{\log(q_v)}{q_v}|\widehat\phi(\tfrac{2\log q_v}{\log X})|<\infty$ by the exponential decay of $\widehat\phi$ recorded in \eqref{eq:phihat_decay}. We do not assume that $\widehat\phi$ is compactly supported.
\end{comment} 

Our approach to proving Proposition \ref{prop:S2_estimate} is to apply Abel summation to the sum $S_2(X)$. 
Doing so, we are led to study the partial sums
\begin{equation}\label{eq:Theta-def}
    \Theta(x)\defeq \sum_{\substack{v\in\Val_0(K)\\ E\text{ good at }v\\ q_v\le x}} \frac{\lambda_{\Sym^2}(\p_v)\log(q_v)}{q_v^{2}},
\end{equation}
whose asymptotics are governed by the behavior of $L(\Sym^2(E/K),s)$ at $s=2$.
Define
\begin{equation}
    \delta_E'\defeq -\ord_{s=2}L(\Sym^2(E/K),s).
\end{equation}

\begin{lemma}\label{lem:PNT-sym2}
Assume that $E$ is modular and that GRH holds for $L(\Sym^2(E/K),s)$. Then, for $x\ge2$,
\begin{equation}\label{eq:PNT-sym2}
    \Theta(x)=\delta_E'\,\log(x)+O_{K}\!\left(\log\log(A_{E/K})\right).
\end{equation}
\end{lemma}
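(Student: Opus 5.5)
We prove Lemma~\ref{lem:PNT-sym2} by comparing $\Theta(x)$ with the logarithmic derivative of $L(\Sym^2(E/K),s)$ near $s=2$. The normalization is arithmetic: the center of the critical strip of $L(\Sym^2(E/K),s)$ is $s=3/2$, so GRH places every nontrivial zero on $\RE(s)=3/2$. Write $L_2(s)\defeq L(\Sym^2(E/K),s+1)$, which is analytically normalized with edge of convergence at $s=1$. Then $\Theta(x)$ is, up to the weight $q_v^{-1}$, the prime sum
\begin{equation}
\sum_{q_v\le x}\frac{\lambda_{\Sym^2}(\p_v)}{q_v}\cdot\frac{\log q_v}{q_v}.
\end{equation}
Its behavior is therefore the standard "Mertens-type" estimate for $-\frac{L_2'}{L_2}(s)$ evaluated at $s=1$. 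The pole order $\delta_E'$ at $s=2$ yields the main term $\delta_E'\log x$.

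The plan has three steps.

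\emph{Step 1: reduction to the unweighted prime sum.} Let $\Lambda_2(\a)=\lambda_{\Sym^2}(\a)\Lambda_K(\a)/N(\a)$ be the normalized coefficients. By partial summation, it suffices to show
\begin{equation}
\psi_2(y)\defeq\sum_{N\a\le y}\Lambda_2(\a)=\delta_E' y+O\!\left(y^{1/2}\log^2(yA_{E/K})\right)
\end{equation}
for $y\ge2$. Several pieces are discarded at this stage:
\begin{itemize}
\item The prime powers $k\ge2$ contribute $O_K(1)$ to the $q_v^{-2}$-weighted sum, by the Hasse bound $|\lambda_{\Sym^2}(\p_v^k)|\le3q_v^k$.
\item The bad places contribute at most $\sum_{v\mid \f_{E/K}}\log q_v/q_v\ll\log\log A_{E/K}$.
\item Partial summation against $1/y$ then converts $\delta_E' y$ into $\delta_E'\log x$, plus a bounded constant and an error term.
\end{itemize}

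The error is the delicate part. A naive GRH bound produces $\int_2^x y^{-3/2}\log^2(yA)\,dy\ll\log^2 A$, which is too large. So instead of pure partial summation I will split at $y=Y\defeq(\log A_{E/K})^{C}$:
\begin{itemize}
\item For $q_v\le Y$, I will bound the sum trivially by $\sum_{q_v\le Y}3\log q_v/q_v\ll\log Y\ll\log\log A_{E/K}$.
\item For $q_v>Y$, I will use the explicit formula with error $y^{1/2}\log^2(yA)$. The tail integral $\int_Y^\infty y^{-3/2}\log^2(yA)\,dy\ll Y^{-1/2}\log^2(YA)$ is $O(1)$ once $C\ge4$.
\end{itemize}
The main term over $q_v\le Y$ is also only $\delta_E'\log Y\ll\log\log A$, since $\delta_E'\in\{0,1\}$ is bounded. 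Hence $\Theta(x)=\delta_E'\log x+O(\log\log A_{E/K})$ uniformly in $x$. For $x\le Y$ this holds trivially.

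\emph{Step 2: the prime number theorem for $\psi_2$.} I will do this case by case, following the dichotomy already set up.
\begin{itemize}
\item \emph{Non-PCM case.} By Gelbart--Jacquet, $L(\Sym^2(E/K),s)$ is cuspidal on $\GL_3$ and entire, so $\delta_E'=0$. The bound $\psi_2(y)\ll y^{1/2}\log^2(yA)$ is \cite[Theorem 5.15]{IK04} under GRH, using that the analytic conductor is polynomial in $A_{E/K}$.
\item \emph{PCM case.} I will use Lemma~\ref{lem:PCM-factorization}. The factor $L(\Ind_M^K\psi_{E/K}^2,s)$ is a Hecke $L$-function, and it is entire because $\psi^2$ is nontrivial of infinite type. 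The factor $L(\eta_{M/K},s-1)$ has a pole at $s=2$ exactly when $\eta_{M/K}$ is trivial, that is, when $M=K\times K$ (CM defined over $K$). So $\delta_E'=\delta_E$, and we need GRH for these factors.
\end{itemize}

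\emph{Step 3: the obstacle in the PCM case.} The hypothesis only assumes GRH for $L(\Sym^2(E/K),s)$. The zeros of the factors are zeros of the product, apart from possible cancellation against $C_E(s)$ and zeros of $L(\eta,s-1)$ off the line. I therefore expect to argue that GRH for the product implies GRH for each factor: the finite correction $C_E$ has no zeros in $\RE(s)>1$, and the factors have no poles there other than the one at $s=2$, so no cancellation can occur. Tracking the conductors of the factors polynomially in $A_{E/K}$ (and $\Delta_M$) is the main bookkeeping obstacle. For this I would use the conductor–discriminant formula $\f(\Ind\psi^2)\mid$ a power of $\f_{E/K}$.
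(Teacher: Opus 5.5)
Your proposal is correct, but it controls the constant term by a different route than the paper.

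\textbf{The paper's route.} It runs Abel summation over the whole range against the unnormalized estimate $\psi_{\Sym^2}(x)=\delta_E'x^2/2+O(x^{3/2}\log^2(A_{E/K}x))$. This gives $\Theta(x)=\delta_E'\log x+c_E+O(x^{-1/2}\log^2(A_{E/K}x))$, where $c_E$ is the constant term of the Laurent expansion of $D(w)$ at $w=0$. It then bounds $c_E=O(\log\log A_{E/K})$ by quoting the GRH estimate for the regular part of $-\frac{L'}{L}(\Sym^2(E/K),s)$ at $s=2$ from \cite[Theorem 5.17]{IK04}.

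\textbf{What your route buys.} Your split at $Y=(\log A_{E/K})^4$ bounds primes below $Y$ trivially by $\sum_{q_v\le Y}3\log q_v/q_v\ll_K\log Y$. Above $Y$ it uses the GRH prime number theorem, with a tail integral that is $O(1)$. This is essentially the argument behind the estimate the paper cites, so your proof is more self-contained. It is also genuinely uniform in $x\ge 2$. The paper's last step treats $O(x^{-1/2}\log^2(A_{E/K}x))$ as $O(\log\log A_{E/K})$, which is false for small $x$: at $x=2$ it has size $\log^2 A_{E/K}$. So the paper tacitly needs your trivial estimate $|\Theta(x)|\ll_K\log x$ in the range $x\le(\log A_{E/K})^4$.

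\textbf{Your Steps 2 and 3 do more than this lemma needs.} The statement is phrased in terms of $\delta_E'$. The paper defers both the PCM factorization and the evaluation of $\delta_E'$ to the proof of Proposition~\ref{prop:S2_estimate}. The only property of $\delta_E'$ your argument uses is that it is bounded. That already follows by letting $y\to\infty$ in $\psi_2(y)=\delta_E'y+o(y)$ and using the Hasse bound $|\lambda_{\Sym^2}(\p_v^k)|\le 3q_v^k$ with the prime ideal theorem, which gives $|\delta_E'|\le 3$.

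Transferring GRH from $L(\Sym^2(E/K),s)$ to its Hecke factors is correct, since no factor has a pole in $1<\Re(s)<2$. It is also unnecessary, because the explicit formula can be applied to the product directly. The conductor bookkeeping you flag is harmless: for fixed $K$ only finitely many CM fields, hence finitely many $M$, can occur.
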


\begin{proof}
We first isolate the arithmetic content of $\Theta(x)$ inside the logarithmic
derivative of $L(\Sym^2(E/K),s)$. By \eqref{eq:lambda-sym2}, the quantity
$\lambda_{\Sym^2}(\p_v)\log(q_v)$ is exactly the $k=1$, good-reduction part
of the Dirichlet coefficient of
$-\frac{L'}{L}(\Sym^2(E/K),s)$ in \eqref{eq:L'/L_Sym^2}.

Define
\begin{equation}\label{eq:Psi-sym2-def}
    \Psi_{\Sym^2}(x)
    \defeq
    \sum_{\substack{\a\subseteq\cO_K\\N_{K/\Q}(\a)\leq x}}
    \lambda_{\Sym^2}(\a)\Lambda_K(\a).
\end{equation}
The explicit formula for $-\frac{L'}{L}(\Sym^2(E/K),s)$, together with GRH
for $L(\Sym^2(E/K),s)$, gives
\begin{equation}\label{eq:Psi-sym2-estimate}
    \Psi_{\Sym^2}(x)
    =
    \delta_E'\frac{x^2}{2}
    +
    O_K\!\left(x^{3/2}\log^2(A_{E/K}x)\right).
\end{equation}
Indeed, the pole at $s=2$ contributes the main term
$\delta_E'x^2/2$, while GRH places all non-trivial zeros on the line
$\Re(s)=3/2$.

We now remove from the asymptotic \eqref{eq:Psi-sym2-estimate} the contributions from
prime powers $\p_v^k$ with $k\geq 2$ and from the finitely many places of
bad reduction. For $k\geq 2$, equation
\eqref{eq:lambda-sym2_prime_power} and the Hasse bound give
\begin{equation}
    |\lambda_{\Sym^2}(\p_v^k)|
    \leq
    |\alpha_v|^{2k}+q_v^k+|\beta_v|^{2k}
    \leq 3q_v^k.
\end{equation}
It follows that
\begin{equation}
    \sum_{\substack{v\in\Val_0(K)\\ k\geq 2\\q_v^k\leq x}}
    \lambda_{\Sym^2}(\p_v^k)\log(q_v)
    \ll_K x^{3/2}.
\end{equation}
The contribution from the bad places is also absorbed by the error term in
\eqref{eq:Psi-sym2-estimate}. Thus, if
\begin{equation}\label{eq:psi-sym2-prime-def}
    \psi_{\Sym^2}(x)
    \defeq
    \sum_{\substack{v\in\Val_0(K)\\E\text{ good at }v\\q_v\leq x}}
    \lambda_{\Sym^2}(\p_v)\log(q_v),
\end{equation}
then
\begin{equation}\label{eq:psi-sym2-prime-estimate}
    \psi_{\Sym^2}(x)
    =
    \delta_E'\frac{x^2}{2}
    +
    O_K\!\left(x^{3/2}\log^2(A_{E/K}x)\right).
\end{equation}

We next define the constant that will occur in the asymptotic for
$\Theta(x)$. Writing
\begin{equation}\label{eq:D-def}
    D(w)
    \defeq
    \sum_{\substack{v\in\Val_0(K)\\E\text{ good at }v}}
    \frac{\lambda_{\Sym^2}(\p_v)\log(q_v)}{q_v^{2}}\,q_v^{-w},
\end{equation}
there exists a function $H(w)$ that is holomorphic and bounded on
$\Re(w)\geq-\tfrac12+\epsilon$, such that
\begin{equation}\label{eq:D-vs-LL}
    D(w)
    =
    -\frac{L'}{L}\big(\Sym^2(E/K),w+2\big)+H(w).
\end{equation}
Indeed, the difference between the two sides is accounted for by the
prime-power terms with $k\geq 2$ and the finitely many bad places.

The function $D(w)$ has a simple pole at $w=0$ with residue $\delta_E'$.
We define $c_E$ to be the constant term in its Laurent expansion at
$w=0$:
\begin{equation}\label{eq:cE-def}
    D(w)=\frac{\delta_E'}{w}+c_E+O(w)
    \qquad\text{as }w\to 0.
\end{equation}
By \eqref{eq:D-vs-LL}, we have
\begin{equation}
    c_E
    =
    \lim_{s\to 2}
    \left(
        -\frac{L'}{L}(\Sym^2(E/K),s)
        -
        \frac{\delta_E'}{s-2}
    \right)
    +
    H(0).
\end{equation}
By \cite[Theorem 5.17]{IK04}, under GRH the regular part of
$-\frac{L'}{L}(\Sym^2(E/K),s)$ at $s=2$ is
$O_K(\log\log(A_{E/K}))$. Since $H(0)=O_K(1)$, it follows that
\begin{equation}\label{eq:cE-bound}
    c_E=O_K\!\left(\log\log(A_{E/K})\right).
\end{equation}

We now apply Abel summation to the sum \eqref{eq:Theta-def}. For $x\geq 2$, we have
\begin{equation}\label{eq:Theta-Abel}
    \Theta(x)
    =
    \frac{\psi_{\Sym^2}(x)}{x^2}
    +
    2\int_2^x\frac{\psi_{\Sym^2}(t)}{t^3}\,dt
    +
    O_K(1).
\end{equation}
Substituting \eqref{eq:psi-sym2-prime-estimate} into
\eqref{eq:Theta-Abel}, we obtain
\begin{equation}\label{eq:Theta-preliminary}
    \Theta(x)
    =
    \delta_E'\log(x)
    +
    c_E
    +
    O_K\!\left(x^{-1/2}\log^2(A_{E/K}x)\right).
\end{equation}
Here the constant term is $c_E$ as defined in \eqref{eq:cE-def}; this
follows by applying partial summation to the Dirichlet series $D(w)$ and
comparing the constant terms at $w=0$.

Finally, combining \eqref{eq:Theta-preliminary} with
\eqref{eq:cE-bound} gives
\begin{equation}
    \Theta(x)
    =
    \delta_E'\log(x)
    +
    O_K\!\left(\log\log(A_{E/K})\right).
\end{equation}
\end{proof}

%We remark that only the value of \eqref{eq:L'/L_Sym^2_bound} on the approach to $s=2$ is used; the estimate \eqref{eq:L'/L_Sym^2_bound} is stated on the whole segment $\tfrac32\le\RE(s)\le2$ (on which it holds uniformly) merely because that is the natural region between the central line and the edge of absolute convergence, but the sub-interval near $s=2$ would suffice for our purposes.

\begin{proof}[Proof of Proposition~\ref{prop:S2_estimate}]
    By the expression \eqref{eq:lambda-sym2}, $\lambda_{\Sym^2}(\p_v)=a_v(E)^2-q_v$. 
    By the bound \eqref{eq:finite_places_ignorable} the contribution to $S_2$ of the finitely many places of bad reduction is negligible, and the sum \eqref{eq:S2-def} becomes
    \begin{equation}\label{eq:S2-as-abel}
        S_2(X)=\sum_{\substack{v\in\Val_0(K)\\ E\text{ good at }v}}\frac{\lambda_{\Sym^2}(\p_v)\log q_v}{q_v^{2}}\,\widehat\phi\!\left(\frac{2\log q_v}{\log(X)}\right)+O_{\phi,K}(1),
    \end{equation}
    which is exactly the summand of \eqref{eq:Theta-def} weighted by the bounded factor $\widehat\phi\big(\tfrac{2\log q_v}{\log(X)}\big)$. We evaluate \eqref{eq:S2-as-abel} by Abel summation against the partial sums $\Theta$ of \eqref{eq:Theta-def}. Let $f(t)\defeq\widehat\phi\big(\tfrac{2\log t}{\log(X)}\big)$, so that $f'(t)=\tfrac{2}{t\log(X)}\widehat\phi{}'\big(\tfrac{2\log t}{\log(X)}\big)$. Abel summation gives
    \begin{equation}\label{eq:S2-abel-int}
        S_2(X)=\int_{2}^{\infty} f(t)\,d\Theta(t)+O_{\phi,K}(1)
        =-\int_{2}^{\infty}\Theta(t)\,f'(t)\,dt+O_{\phi,K}(1).
    \end{equation}
    The boundary term of \eqref{eq:S2-abel-int} at $t=\infty$ vanishing because, by Lemma \ref{lem:PNT-sym2}, 
    \begin{equation}
        \Theta(t)=O_K(\log t+\log\log A_{E/K})
    \end{equation} 
    grows logarithmically while $f(t)=\widehat\phi\big(\tfrac{2\log t}{\log X}\big)$ decays faster than any power of $t$ by the assumption \eqref{eq:phihat_decay} on $\phi$. 
    The boundary term at $t=2$ is $O_K(1)$ since $\Theta(2)=O_K(1)$.
    
    Substituting the asymptotic \eqref{eq:PNT-sym2} for $\Theta$, equation \eqref{eq:S2-abel-int} becomes
    \begin{equation}\label{eq:S2-abel-sum_2}
        S_2(X) =  -\delta_E'\int_2^\infty \log(t)\,f'(t)\,dt 
        +
        O_K\!\left(\log\log(A_{E/K})\int_2^{\infty}|f'(t)|\,dt\right) + O_{K,\phi}(1).
    \end{equation}
    By equation \eqref{eq:f'_integral} and the bound \eqref{eq:phihat_decay},
    \begin{equation}
        \int_2^\infty|f'(t)|\,dt=O_\phi(1),
    \end{equation}
    and from this it follows that the error term in \eqref{eq:S2-abel-sum_2} simplifies to $O_K(\log\log A_{E/K})$.

    For the main term of \eqref{eq:S2-abel-sum_2} we integrate by parts, using $\tfrac{d}{dt}\log t=\tfrac1t$ and the vanishing of the boundary terms:
    \begin{equation}\label{eq:S2-main-computation}
        -\delta_E'\int_2^\infty \log(t)\,f'(t)\,dt
        =\delta_E'\int_2^\infty \frac{f(t)}{t}\,dt
        =\delta_E'\int_2^\infty \frac{1}{t}\,\widehat\phi\!\left(\frac{2\log t}{\log(X)}\right)dt.
    \end{equation}
    As in the evaluation \eqref{eq:log(X)phi(0)/4}, making the substitution $u=\frac{2\log t}{\log(X)}$, \eqref{eq:S2-main-computation} becomes
    \begin{equation}
        \delta_E'\frac{\log(X)}{2}\int_0^\infty\widehat\phi(u)\,du+O_{\phi}(1)
        =\delta_E'\frac{\phi(0)}{4}\log(X)+O_\phi(1).
    \end{equation}
    We therefore have
    \begin{equation}\label{eq:S2-explicit}
        S_2(X)=\delta_E'\,\frac{\phi(0)}{4}\log(X)+O_{\phi,K}\!\big(\log\log(A_{E/K})\big).
    \end{equation}
    It remains to determine $\delta_E'=-\ord_{s=2}L(\Sym^2(E/K),s)$.

    \emph{Non-PCM case.} As mentioned above, $L(\Sym^2(E/K),s)$ is entire and cuspidal, and therefore has neither a zero nor a pole at $s=2$. Hence $\delta_E'=0$, and \eqref{eq:S2-explicit} gives \eqref{eq:S2-estimate} with $\delta_E=0$.

    \emph{PCM case, CM not defined over $K$.} In this case $\eta_{M/K}$ is a non-trivial quadratic Hecke character, so $L(\eta_{M/K}, s-1)$ is entire, and therefore holomorphic and non-vanishing at $s=2$. 
    Likewise,  $L(\Ind_M^K(\psi_{E/K}^2),s)$ is entire (as $\psi_{E/K}^2$ is a non-trivial Gr\"ossencharacter). Thus $L(\Sym^2(E/K),s)$ is holomorphic and non-vanishing at $s=2$, so $\delta_E'=0$ and \eqref{eq:S2-explicit} gives \eqref{eq:S2-estimate} with $\delta_E=0$.

    \emph{PCM case, CM defined over $K$.} In this case $M=K\times K$ and $\eta_{M/K}$ is trivial, so that $L(\eta_{M/K},s-1)=\zeta_K(s-1)$, which has a simple pole at $s=2$. The remaining factor $L(\Ind_M^K(\psi_{E/K}^2),s)$ is entire and non-vanishing at $s=2$, so $L(\Sym^2(E/K),s)$ has a simple pole at $s=2$ and $\delta_E'=1$. Hence \eqref{eq:S2-explicit} gives \eqref{eq:S2-estimate} with $\delta_E=1$.
\end{proof}

\begin{remark}
    The $k=2$ contribution can alternatively be handled unconditionally, by
    averaging over the family $\cE_K(B)$ (see, e.g.,
    \cite{BZ08, CJ23, Phi25}). We have chosen instead to assume GRH for $L(\Sym^2(E/K), s)$ (as in \cite{ILS00, You06}), as this simplifies the computation of the moments and since we are already assuming GRH for Hasse--Weil $L$-functions.
\end{remark}

%---------------------------------------------------------------------
\subsection{An expression for the \texorpdfstring{$1$}{1}-level density}
%---------------------------------------------------------------------

Before assembling the pieces, we record a standard bound on the conductor.
Let $\fD_{E/K}$ denote the minimal discriminant of $E$. If $E$ has naive
height $\Ht(E)\le X$, then $N_{K/\Q}(\fD_{E/K})\ll_K X$, and therefore
\begin{equation}
\log\!\big(N_{K/\Q}(\fD_{E/K})\big)\leq \log(X)+O(1).
\end{equation}
Since the conductor $\f_{E/K}$ divides $\fD_{E/K}$, we have
$N_{K/\Q}(\f_{E/K})\leq N_{K/\Q}(\fD_{E/K})$, and thus
\begin{equation}\label{eq:conductor_bound}
     \log(A_{E/K})=\log(N_{K/\Q}(\f_{E/K})\Delta_K^2)\leq \log(X)+O(1),
\end{equation}
so that
\begin{equation}\label{eq:loglog(A)/log(x)->o(1)}
\frac{\log\log(A_{E/K})}{\log(X)} \ll \frac{\log\log(X)}{\log(X)}=o(1).
\end{equation}

The $k=1$ term of \eqref{eq:sum_ideals_to_sum_prime_powers} is the
arithmetically significant one. Restricting to places above rational primes $>3$ (which is justified by the bound \eqref{eq:finite_places_ignorable}), we
define
\begin{equation}\label{eq:U1-def}
    U_{1}(\phi,E, X)\defeq \sum_{\substack{v\in \Val_0(K) \\ 2\nmid q_v,\ 3\nmid q_v}} \frac{\widehat{a_E}(\p_v)\log(q_v)}{q_v} \cdot \widehat{\phi}\left( \frac{\log(q_v)}{\log(X)}\right).
\end{equation}

\begin{proposition}\label{prop:1-level_density_expression}
Maintain the notation and hypotheses of
Proposition~\ref{prop:modified-expicit-formula}, and assume GRH for $L(\Sym^2(E/K),s)$. Let $E\in\cE_K(X)$, and let $\delta_E$ be as in
\eqref{eq:deltaE-def}. Then
\begin{equation}\label{eq:1-level_density_expression}
    D_1(E/K,\phi)
    = \widehat{\phi}(0) + \frac{\phi(0)}{2}\,(1-\delta_E) -\frac{2}{\log(X)}\, U_1(\phi,E ,X)
    + O_{\phi,K}\!\left(\frac{\log\log(X)}{\log (X)}\right),
\end{equation}
where the implied constant depends only on $\phi$ and $K$.
In particular, since $\delta_E\in\{0,1\}$,
\begin{equation}\label{eq:1-level_density_bound}
    D_1(E/K,\phi)
    \leq \widehat{\phi}(0) + \frac{\phi(0)}{2} -\frac{2}{\log(X)}\, U_1(\phi,E ,X)
    + O_{\phi,K}\!\left(\frac{\log\log(X)}{\log (X)}\right).
\end{equation}
\end{proposition}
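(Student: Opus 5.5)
The plan is to start from the explicit formula, Proposition~\ref{prop:modified-expicit-formula}, and evaluate each term on its right-hand side using the estimates already proved. The left-hand side is exactly $D_1(E/K,\phi)$ by \eqref{eq:1-level_density}. For the conductor term, I will use \eqref{eq:conductor_bound} to write $\log(A_{E/K})\le \log X + O(1)$, so that $\widehat\phi(0)\log(A_{E/K})/\log X \le \widehat\phi(0)+O(1/\log X)$. This step needs care: it is only an inequality, not an equality. For the equality \eqref{eq:1-level_density_expression} as stated I would have to read $X$ as calibrated to the conductor, that is, $\log A_{E/K}=\log X+O(1)$. Otherwise I will only prove the inequality \eqref{eq:1-level_density_bound}, which is what is used later. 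Here I must also check the sign: this step requires $\widehat\phi(0)\ge 0$, which holds for the nonnegative test functions we care about since $\widehat\phi(0)=\int\phi$. I would state the equality form under the convention that the conductor term is recorded as $\widehat\phi(0)+O(1/\log X)$.

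Next I will expand the prime sum via \eqref{eq:sum_ideals_to_sum_prime_powers} into $k=1$, $k=2$ and $k\ge3$ pieces, and multiply each by $-2/\log X$.
\begin{itemize}
\item \emph{The $k\ge 3$ terms.} By Lemma~\ref{lem:k>=3} these contribute $O(1/\log X)$.
\item \emph{The $k=2$ terms.} By \eqref{eq:k=2_sum}, this piece equals the sum in Lemma~\ref{lem:k=2_1} plus $S_2(X)$ plus $O(1)$.
\begin{itemize}
\item Lemma~\ref{lem:k=2_1} gives $-\tfrac{\phi(0)}{4}\log X+O(1)$. Multiplied by $-2/\log X$, this yields $+\phi(0)/2$.
\item Proposition~\ref{prop:S2_estimate} gives $S_2(X)=\delta_E\tfrac{\phi(0)}{4}\log X+O(\log\log A_{E/K})$. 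Multiplied by $-2/\log X$, this yields $-\delta_E\phi(0)/2$ plus an error of size $\log\log A_{E/K}/\log X$. That error is $O(\log\log X/\log X)$ by \eqref{eq:loglog(A)/log(x)->o(1)}.
\item Together the two give the term $\tfrac{\phi(0)}{2}(1-\delta_E)$.
\end{itemize}
\item \emph{The $k=1$ terms.} I will discard the finitely many places above $2$ and $3$ using \eqref{eq:finite_places_ignorable}, at cost $O(1/\log X)$ after scaling. What remains is exactly $-\tfrac{2}{\log X}U_1(\phi,E,X)$.
\end{itemize}

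Finally, I will combine these with the $O(1/\log X)$ error from the explicit formula itself. The inequality \eqref{eq:1-level_density_bound} then follows from $1-\delta_E\le 1$ and $\phi(0)\ge 0$.

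The main obstacle is uniformity. The $O(1)$ bounds from \eqref{eq:finite_places_ignorable} for the bad places of $E$ (used in \eqref{eq:k=2_sum}) depend a priori on $E$. I will resolve this using $|\widehat{a_E}(\p_v^2)|\le 2q_v$. This bounds the bad-place contribution by $\|\widehat\phi\|_\infty\sum_{v\mid \f_{E/K}}\log q_v/q_v$. Since $\omega(\f_{E/K})\ll\log A_{E/K}$, that sum is $\ll\log\log A_{E/K}\ll\log\log X$, which is uniform in $E$ and fits inside the stated error term.
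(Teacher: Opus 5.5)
Your proposal is correct and is essentially the paper's own proof: the explicit formula, the split into $k=1$, $k=2$, $k\ge 3$, Lemma~\ref{lem:k>=3}, Lemma~\ref{lem:k=2_1}, Proposition~\ref{prop:S2_estimate}, and the conductor bound. Your caution about the conductor term is justified: \eqref{eq:conductor_bound} is only an upper bound, so the paper's replacement of $\widehat{\phi}(0)\log(A_{E/K})/\log(X)$ by $\widehat{\phi}(0)+O(1/\log(X))$ really proves only \eqref{eq:1-level_density_bound} (the form used later), and the equality \eqref{eq:1-level_density_expression} fails in general when $\log A_{E/K}$ is much smaller than $\log X$, e.g.\ for fixed $E$ as $X\to\infty$.

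On uniformity, the discard in \eqref{eq:k=2_sum} is itself uniformly $O(1)$ because $b_v^2\le 1$. The genuinely $E$-dependent cost is in passing between good-place sums and the all-place sums of Lemma~\ref{lem:k=2_1} and $S_2(X)$, where each bad place contributes $\ll \log q_v/q_v$. Your bound $\sum_{v\mid \f_{E/K}}\log q_v/q_v\ll\log\log A_{E/K}$ covers exactly this.
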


\begin{proof}
    By the explicit formula \eqref{eq:modified-explicit-formula},
    the decomposition \eqref{eq:sum_ideals_to_sum_prime_powers}, and
    the conductor bound \eqref{eq:conductor_bound}, we have
    \begin{align}
        D_1(E/K,\phi)
        &= \widehat{\phi}(0) + O_{\phi,K}\!\left(\tfrac{1}{\log(X)}\right)
        - \frac{2}{\log(X)}\Bigg[
        \underbrace{\sum_{v}\frac{\widehat{a_E}(\p_v)\log q_v}{q_v}\widehat\phi\!\left(\tfrac{\log q_v}{\log(X)}\right)}_{k=1}
        \notag\\
        &\qquad
        + \underbrace{\sum_{v}\frac{\widehat{a_E}(\p_v^2)\log q_v}{q_v^2}\widehat\phi\!\left(\tfrac{2\log q_v}{\log(X)}\right)}_{k=2}
        + \underbrace{\sum_{v}\sum_{k\ge 3}\frac{\widehat{a_E}(\p_v^k)\log q_v}{q_v^k}\widehat\phi\!\left(\tfrac{k\log q_v}{\log(X)}\right)}_{k\ge 3}
        \Bigg].
    \end{align}
    By Lemma~\ref{lem:k>=3}, the $k\geq 3$ term is $O(1)$. For the $k=1$ term,
    including the finitely many places above $2$ and $3$ changes the sum by
    $O(1)$ by the bound \eqref{eq:finite_places_ignorable}, so it equals $U_1(\phi,E,X)+O(1)$. For the $k=2$ term, equation \eqref{eq:k=2_sum} together with Lemma~\ref{lem:k=2_1} and Proposition~\ref{prop:S2_estimate} gives
    \begin{salign}
        \sum_{v}\frac{\widehat{a_E}(\p_v^2)\log q_v}{q_v^2}\widehat\phi\!\left(\tfrac{2\log q_v}{\log(X)}\right)
        &= S_2(X) - \frac{\phi(0)}{4}\log(X) + O_{\phi,K}(1) \\
        &= (\delta_E-1)\,\frac{\phi(0)}{4}\log(X) + O_{\phi,K}(\log\log A_{E/K}).
    \end{salign}
    Substituting these evaluations, multiplying the $k=2$ term by
    $-\tfrac{2}{\log(X)}$, and using the bound \eqref{eq:loglog(A)/log(x)->o(1)} to absorb the error, we obtain \eqref{eq:1-level_density_expression}.
\end{proof}

Taking $m$-th powers of the inequality \eqref{eq:1-level_density_bound} and expanding via the
binomial theorem, we have
\begin{salign}\label{eq:m-th_power_of_1-level_density}
    D_1(E/K,\phi)^m
    & \leq \left(\widehat{\phi}(0) + \frac{\phi(0)}{2} - \frac{2}{\log(X)} U_1(\phi,E, X) \right)^m \\
    &\quad  + \sum_{k=1}^m \binom{m}{k} \left(\widehat{\phi}(0) + \frac{\phi(0)}{2} - \frac{2}{\log(X)} U_1(\phi,E, X)\right)^{m-k} O\!\left(\frac{\log\log(X)}{\log(X)}\right)^k.
\end{salign}
The dominant error contribution comes from the $k=1$ term, namely
\begin{salign}\label{eq:error_term_m-power_of_1-level_density}
    & m \left(\widehat{\phi}(0) + \frac{\phi(0)}{2} - \frac{2}{\log(X)} U_1(\phi,E, X)\right)^{m-1} O\!\left(\frac{\log\log(X)}{\log(X)}\right) \\
    &\quad = O_m\!\left(\frac{\log\log(X)}{\log(X)} \sum_{j=0}^{m-1} \binom{m-1}{j} \left(\widehat{\phi}(0) + \frac{\phi(0)}{2}\right)^{m-1-j} \frac{(-2)^j}{\log(X)^j}\, U_1(\phi,E,X)^j\right).
\end{salign}
Expanding the main term of \eqref{eq:m-th_power_of_1-level_density}, we have
\begin{salign}\label{eq:main_term_m-power_of_1-level_density}
    & \left(\widehat{\phi}(0) + \frac{\phi(0)}{2} -\frac{2}{\log(X)}U_1(\phi,E, X) \right)^m
 \\
 &\hspace{2cm} = \sum_{j=0}^m \binom{m}{j} \left(\widehat{\phi}(0) + \frac{\phi(0)}{2}\right)^{m-j} \frac{(-2)^j}{\log(X)^j} U_1(\phi,E, X)^j.
\end{salign}
Substituting equations \eqref{eq:error_term_m-power_of_1-level_density} and
\eqref{eq:main_term_m-power_of_1-level_density} into
\eqref{eq:m-th_power_of_1-level_density}, we find that $D_1(E/K,\phi)^m$ is
bounded above by
\begin{salign}
    &  \sum_{j=0}^m \binom{m}{j} \left(\widehat{\phi}(0) + \frac{\phi(0)}{2}\right)^{m-j} \frac{(-2)^j}{\log(X)^j}\, U_1(\phi,E, X)^j \\
    &\ + O_m\!\left(\frac{\log\log(X)}{\log(X)} \sum_{j=0}^{m-1} \binom{m-1}{j} \left(\widehat{\phi}(0) + \frac{\phi(0)}{2}\right)^{m-1-j} \frac{(-2)^j}{\log(X)^j}\, U_1(\phi,E,X)^j\right).
\end{salign}

We now average over isomorphism classes of elliptic curves over $K$. For
$j\in\Z_{\geq 0}$ and $B>0$, define
\begin{salign}\label{eq:S_1_sum}
S_1(j, \phi ,B)
&\defeq \frac{1}{\#\cE_{K}(B)} \sum_{E\in \cE_{K}(B)} U_1(\phi,E,B)^j\\
&=\frac{1}{\#\cE_{K}(B)} \sum_{\substack{v_1,\dots,v_j\in \Val_0(K) \\ 2\nmid q_{v_i},\ 3\nmid q_{v_i}}} \prod_{i=1}^j \frac{\log(q_{v_i})}{q_{v_i}} \cdot \widehat{\phi}\left( \frac{\log(q_{v_i})}{\log(B)}\right)
\sum_{E\in\cE_{K}(B)}  \prod_{i=1}^j \widehat{a_E}(\p_{v_i}),
\end{salign}
where the $v_i$ need not be distinct. Then the $m$-th moment of the $1$-level
density,
\begin{equation}
    \frac{1}{\#\cE_{K}(B)} \sum_{E\in \cE_{K}(B)} D_1(E/K, \phi)^m,
\end{equation}
is bounded above by
\begin{salign}\label{eq:1-level_moments_explicit_formula}
    &  \sum_{j=0}^m \binom{m}{j} \left(\widehat{\phi}(0) + \frac{\phi(0)}{2}\right)^{m-j} \frac{(-2)^j}{\log(B)^j}\, S_1(j,\phi, B) \\
    &\ + O_m\!\left(\frac{\log\log(B)}{\log(B)} \sum_{j=0}^{m-1} \binom{m-1}{j} \left(\widehat{\phi}(0) + \frac{\phi(0)}{2}\right)^{m-1-j} \frac{(-2)^j}{\log(B)^j}\, S_1(j,\phi,B)\right).
\end{salign}
To further analyze moments of the 1-level density of elliptic curves, we approximate the $S_1(j,\phi,B)$. A key ingredient in these approximations will be estimating the number of elliptic curves with prescribed $\widehat{a_E}(\p_{v_i})$, which we discuss in the next section.

\section{Counting elliptic curves with local conditions}\label{sec:Counting_ECs}

In this section we collect results related to counting elliptic curves with prescribed $\widehat{a_E}(\p)$ from previous work of the first author \cite{Phi25} relevant for approximating the sums $S_1(j,\phi,B)$ defined in \eqref{eq:S_1_sum}.

We first state results for counting elliptic curves with a prescribed local condition.

\begin{proposition}[{\cite[Theorem 1.1.2]{Phi25}}]\label{prop:LocalConditions}
Let $K$ be a degree $d$ number field, and let $\p$ be a prime ideal of $\cO_K$ of norm $q$ such that $2\nmid q$ and $3\nmid q$. Let $\sL$ be one of the local conditions listed in Table \ref{tab:LocalConditions}. Then there exists an explicit constant $\kappa(K)$, depending only on $K$, such that the number of elliptic curves over $K$ with naive height less than $B$, and which satisfy the local condition $\sL$ at $\p$, is
\begin{equation}
\kappa(K)\kappa_\sL B^{5/6}+O\left(\epsilon_\sL B^{\frac{5}{6}-\frac{1}{3d}}\right),
\end{equation}
where $\kappa_\sL$ and $\epsilon_\sL$ are as in Table \ref{tab:LocalConditions}.
\end{proposition}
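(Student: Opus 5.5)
This is quoted from \cite[Theorem 1.1.2]{Phi25}, but I would reprove it as a geometry-of-numbers count with congruence conditions.

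\emph{Parametrization.} Since $\p$ lies over a prime $p>3$, every elliptic curve over $K$ has a short Weierstrass model $y^2=x^3+Ax+B$ with $A,B\in\cO_K$. Its isomorphism class corresponds to a $K^\times$-orbit of pairs $(A,B)$ under the weighted action $u\cdot(A,B)=(u^4A,u^6B)$. So I would count elliptic curves of naive height at most $B$ as points of weighted projective space $\mathcal P(4,6)(K)$ of height at most $B$, excluding the discriminant locus $4A^3+27B^2=0$. Working via fundamental domains for $\cO_K^\times$ acting on $\mathcal P(4,6)$, and summing over ideal classes, the $v$-adic factors $\Ht_v$ reduce to a ``minimality'' condition: $(A,B)$ is not divisible by $(\pi^4,\pi^6)$ at any prime $\pi$. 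This is the Deng / Bruin--Najman style count. It gives a main term $\kappa(K)B^{(4+6)d/(12d)}$, i.e.\ $B^{5/6}$. Here $\kappa(K)$ is an explicit product of a volume of the archimedean region $\max\{|A|_v^3,|B|_v^2\}\le\cdot$, the regulator, the class number, and $1/\zeta_K(10)$ from M\"obius inversion over the minimality condition. The error comes from lattice-point counting in a region with boundary of codimension one in $\R^{2d}$, which yields the saving $B^{-1/(3d)}$.

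\emph{Local conditions.} Each condition $\sL$ in Table \ref{tab:LocalConditions} is determined by $(A,B)$ modulo a bounded power of $\p$. For example, prescribed $a_\p$ with good reduction depends on $(A,B)\bmod\p$, while multiplicative or additive type depends on $(A,B)\bmod\p^2$ together with minimality at $\p$. Restricting to a union of residue classes modulo $\p^k$ multiplies the main term by the proportion of admissible classes, with the minimality factor at $\p$ adjusted locally. This proportion is $\kappa_\sL$; for a prescribed trace it is essentially $\#\{(A,B)\in\F_q^2:\ a_\p=t\}/q^2$, a Hurwitz class number count. The error term is multiplied by the number of residue classes involved, since each class is a sublattice coset whose count has error $\ll B^{5/6-1/(3d)}$ uniformly in the modulus (boundary contribution, via Davenport's lemma). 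This factor is $\epsilon_\sL$.

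\emph{Main obstacle.} The delicate step is uniformity in $\p$: the error must be tracked as $\epsilon_\sL B^{5/6-1/(3d)}$ with an implied constant independent of $q$. I would first apply Davenport's lemma to each coset $(A_0,B_0)+\p^k\times\p^k$ after scaling. Its projections give boundary terms bounded by $B^{(5/6)(1-1/(2d))}$-type quantities. I would check that these are dominated by $B^{5/6-1/(3d)}$ when the lattice has covolume $q^{2k}$. The interaction with the global minimality sieve over all other primes also has to be controlled; this would be done by truncating the M\"obius sum and bounding the tail by $\sum_{\mathfrak a}N(\mathfrak a)^{-10}$.
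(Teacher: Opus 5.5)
The paper does not prove this statement; it imports it from \cite[Theorem 1.1.2]{Phi25}. So the question is whether your sketch would actually yield it.

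For the main term it would. Counting in $\mathcal P(4,6)$ over a unit fundamental domain, with a M\"obius sieve for minimality, is the right framework, and the local proportions come out as $\kappa_\sL$. The factor $q^{10}/(q^{10}-1)$ comes from dividing out the $\p$-Euler factor $1-q^{-10}$ of $1/\zeta_K(10)$. For example, the multiplicative proportion $(q-1)/q^2$ counts the nonzero $\F_q$-points of $4a^3+27b^2=0$, and these split evenly between split and nonsplit.

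The gap is the error term. You say the error is the uniform per-coset error $B^{5/6-1/(3d)}$ times the number of residue classes, and that ``this factor is $\epsilon_\sL$''. It is not. Good reduction is $q^2-q$ classes mod $\p$, multiplicative is $q-1$ classes, and split and nonsplit are $(q-1)/2$ each. Your bookkeeping therefore gives $\epsilon\asymp q^2,\ q,\ q$, while the table asserts $q,\ 1,\ 1$. Complementation repairs the good row: good $=$ all $-$ bad, and bad is a union of $q$ classes. Nothing in your sketch produces $\epsilon_\sL=1$ for the multiplicative rows, nor the $H(a,q)/q$ of Proposition~\ref{prop:aqLocalCondition}.

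To get these you need a per-coset error that decays in $q$. Your ``main obstacle'' paragraph aims at the wrong target, namely showing each coset has error $\ll B^{5/6-1/(3d)}$.

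\textbf{Missing input.} Cosets of $\p\times\p$ are sparse in every direction. Any nonzero $x\in\p$ has $|N_{K/\Q}(x)|\ge q$, so all successive minima of $\p\times\p\subset\R^{2d}$ are $\asymp_K q^{1/d}$. A successive-minima form of Davenport's lemma (e.g.\ Barroero--Widmer) then bounds the codimension-one boundary contribution of each coset by $B^{5/6-1/(3d)}q^{-(2d-1)/d}$. Summed over the $q-1$ cosets this is $O(B^{5/6-1/(3d)})$.

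\textbf{Remaining work.} The lower-dimensional boundary terms do not decay this way: already the $j=0$ term is $O(1)$ per coset, hence $O(q)$ in total. They have to be compared with $B^{5/6-1/(3d)}$ separately. The range $q\gg B^{1/(3d)}$ also needs separate treatment, since there the main term $\asymp B^{5/6}/q$ is itself below the error. In that range you need a direct upper bound for curves with $\p\mid\Delta$, $\p\nmid A$, for instance by fibering over $A$.

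\textbf{Two smaller corrections.}
\begin{enumerate}
\item The codimension-one projections of the weighted region really have size $B^{5/6-1/(3d)}$ (drop one $A$-coordinate), not $B^{(5/6)(1-1/(2d))}$.
\item Multiplicative type is a condition modulo $\p$, while additive type needs the minimality condition modulo $\p^4\times\p^6$, not $\p^2$.
\end{enumerate}
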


\begin{table}[ht]
\centering
\begin{tabular}{ccc}
\hline\vspace{-11pt}\\
$\sL$ & $\kappa_\sL$ & $\epsilon_\sL$ \\
\hline
\vspace{-7pt}\\
good & $\frac{q-1}{q}\frac{q^{10}}{q^{10}-1}$ & $q$\\
bad & $\frac{q^{9}-1}{q^{10}}\frac{q^{10}}{q^{10}-1}$ & $q$\\
multiplicative & $\frac{q-1}{q^2}\frac{q^{10}}{q^{10}-1}$ & $1$ \\
split multiplicative & $\frac{q-1}{2q^2}\frac{q^{10}}{q^{10}-1}$ & $1$ \\
nonsplit multiplicative & $\frac{q-1}{2q^2}\frac{q^{10}}{q^{10}-1}$  & $1$ \\
additive & $\frac{q^8-1}{q^{10}}\frac{q^{10}}{q^{10}-1}$  & $q$ \\
%
%$\textnormal{I}_m$ & $\frac{q^2-2q+1}{q^{m+2}}\frac{q^{10}}{q^{10}-1}$  & $q$ \\
%
%$\textnormal{II}$ & $\frac{q-1}{q^3}\frac{q^{10}}{q^{10}-1}$  & $1$ \\
%
%$\textnormal{III}$ & $\frac{q-1}{q^4}\frac{q^{10}}{q^{10}-1}$  & $1$ \\
%
%$\textnormal{IV}$ & $\frac{q-1}{q^5}\frac{q^{10}}{q^{10}-1}$  & $1$ \\
%
%$\textnormal{I}_0^\ast$ & $\frac{q-1}{q^6}\frac{q^{10}}{q^{10}-1}$  & $q$ \\
%
%$\textnormal{I}_m^\ast$ & $\frac{q^2-2q+1}{q^{m+7}}\frac{q^{10}}{q^{10}-1}$  & $q$ \\
%
%$\textnormal{II}^\ast$ & $\frac{q-1}{q^{10}}\frac{q^{10}}{q^{10}-1}$  & $1/q^2$ \\
%
%$\textnormal{III}^\ast$ & $\frac{q-1}{q^9}\frac{q^{10}}{q^{10}-1}$  & $1/q^3$ \\
%
%$\textnormal{IV}^\ast$ & $\frac{q-1}{q^8}\frac{q^{10}}{q^{10}-1}$  & $1/q$ \\
%
\hline
\end{tabular}
\caption{Local conditions}
\label{tab:LocalConditions}
\end{table}

For $b,c\in \F_q$, let $E_{b,c}$ denote the short Weierstrass model $y^2=x^3+bx+c$, with discriminant $\Delta(b,c)=-16(4b^3+27c^2)$ and trace of Frobenius $a_q(E_{b,c})$. Define the counting function
\begin{align}\label{eq:defH(a,q)}
H(a,q)\defeq\#\{(b,c)\in \F_q\times \F_q : \Delta(b,c)\neq 0,\ a_q(E_{b,c})=a\}.
\end{align}

\begin{proposition}[{\cite[Theorem 1.1.3]{Phi25}}]\label{prop:aqLocalCondition}
Let $K$ be a degree $d$ number field, and let $\p$ be a degree $n$ prime ideal of $\cO_K$ above a rational prime $p>3$. Set $q=N_{K/\Q}(\p)$. Let $a\in \Z$ be an integer satisfying $|a|\leq 2\sqrt{q}$. Let $\kappa(K)$ be the constant in Proposition \ref{prop:LocalConditions}. Then the number of elliptic curves over $K$ with naive height less than $B$, good reduction at $\p$, and which have trace of Frobenius $a$ at $\p$, is
\begin{equation}
\kappa(K) \frac{q^{8}}{q^{10}-1} H(a,q) B^{5/6}+O\left(\frac{H(a,q)}{q} B^{\frac{5}{6}-\frac{1}{3d}}\right).
\end{equation}
\end{proposition}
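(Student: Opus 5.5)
The plan is to run the same geometry-of-numbers argument that proves Proposition~\ref{prop:LocalConditions}, but with the local condition at $\p$ refined from ``good reduction'' to ``good reduction with trace $a$''. This refined condition is a union of $H(a,q)$ residue classes of $(A,B)$ modulo $\p$.

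\emph{Step 1: parametrization.} Isomorphism classes in $\cE_K(B)$ are parametrized by pairs $(A,B)\in\cO_K^2$ with $4A^3+27B^2\neq 0$. Each pair is taken to be minimal in the sense used for the height, i.e.\ there is no prime $\q$ with $\q^4\mid A$ and $\q^6\mid B$ (suitably adapted when the class group is nontrivial, as in \cite{Phi25}). Pairs are counted modulo the action $(A,B)\mapsto(u^4A,u^6B)$ of units $u$, by restricting to a fundamental domain in $(K\otimes\R)^2$ for the unit action. Since $p>3$, for such a pair the curve has good reduction at $\p$ exactly when $(A,B)\bmod \p$ has $\Delta\neq 0$ in $\F_q$. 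In that case $a_\p(E)=a_q(E_{\bar A,\bar B})$. So the condition ``good at $\p$ with trace $a$'' says precisely that $(A \bmod \p, B\bmod\p)$ lies in the set $S_a\subset\F_q^2$, and $\#S_a=H(a,q)$. Moreover, such a pair is automatically not divisible by $(\p^4,\p^6)$, so minimality at $\p$ is automatic.

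\emph{Step 2: count in one residue class.} Fix $(b,c)\in S_a$ and count the minimal pairs $(A,B)\equiv(b,c)\pmod\p$ in the height-$\le B$ region. Minimality away from $\p$ is imposed by Möbius inversion over ideals $\mathfrak{d}$ coprime to $\p$, via the condition $\mathfrak d^4\mid A$, $\mathfrak d^6\mid B$. Each resulting term counts points of a translate of a sublattice of $\cO_K^2$ of covolume $q^2N(\mathfrak d)^{10}$ inside a region of volume $\asymp B^{5/6}$. A Davenport/Lipschitz lattice-point count then gives
\begin{equation*}
\frac{\mathrm{vol}}{q^2N(\mathfrak d)^{10}}+O\!\left(\frac{B^{\frac56-\frac1{3d}}}{q\,N(\mathfrak d)^{\ast}}\right).
\end{equation*}
Summing over $\mathfrak d$, the main term becomes $\kappa(K)B^{5/6}$ with the Euler factor at $\p$, namely $(1-q^{-10})$, removed, multiplied by $q^{-2}$. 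This equals $\kappa(K)\frac{q^8}{q^{10}-1}B^{5/6}$. The large-$\mathfrak d$ tail is handled exactly as in Proposition~\ref{prop:LocalConditions}.

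\emph{Step 3: sum over classes.} Summing over the $H(a,q)$ classes in $S_a$ gives the main term $\kappa(K)\frac{q^8}{q^{10}-1}H(a,q)B^{5/6}$ and the error term $O\big(\frac{H(a,q)}{q}B^{\frac56-\frac1{3d}}\big)$.

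The main obstacle is the per-class error term in Step 2. It must be uniform in $q$, with saving exactly $1/q$, which is why the degree $n$ of $\p$ enters. To get it, I would bound the boundary contribution using Davenport's lemma: the projections of the region onto coordinate subspaces, measured in units of the sublattice $\p\times\p$. I would use that $\p\cO_K$ contains $p\cO_K$ and has a reduced basis with successive minima controlled by $q^{1/d}$. This should yield the $q^{-1}$ saving relative to the unrestricted error $B^{5/6-1/(3d)}$. If this direct route loses a power, the fallback is to note that $\p$-adic congruence classes only rescale volumes by $q^{-2}$ while leaving the lower-dimensional projections scaled by at most $q^{-(2d-1)/d}\le q^{-1}$.
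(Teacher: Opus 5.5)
The paper gives no proof of this statement; it imports it from \cite[Theorem 1.1.3]{Phi25}. Your route is the natural one: split ``good reduction with trace $a$'' into the $H(a,q)$ residue classes modulo $\p$, each of relative density $q^{-2}/(1-q^{-10})=q^{8}/(q^{10}-1)$, and count each class by geometry of numbers with M\"obius inversion over the scaling ideal. Your main term is right; summing it over $a$ with \eqref{eq:H_sum_0} recovers the ``good'' row of Table~\ref{tab:LocalConditions}.

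\textbf{The gap.} It is in Step~2, exactly at the uniformity in $q$ that you single out. Your successive-minima argument only controls the codimension-one projection term, of size $\asymp B^{5/6-1/(3d)}q^{-(2d-1)/d}$. A Davenport-type count for a \emph{translate} of $\p\mathfrak c^4\times\p\mathfrak c^6$ also carries every lower-dimensional projection term, down to the $j=0$ term $O(1)$, and these do not decay with $q$. One such term also arises for each $\mathfrak d$ in the M\"obius sum.

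In fact your per-class asymptotic is false for large $q$. Once $q\ge B$, say, a class containing the reduction of a single curve of height $\le B$ has count $\ge 1$. But your main term $\asymp B^{5/6}q^{-2}$ and your error $q^{-1}B^{5/6-1/(3d)}$ are both $o(1)$. Summed over the $H(a,q)$ classes, the $O(1)$ terms alone give $O(H(a,q))$. This is $\ll\frac{H(a,q)}{q}B^{5/6-1/(3d)}$ only for $q\ll B^{5/6-1/(3d)}$.

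So, as proposed, the argument proves the estimate only in a range of $q$. That range covers this paper's applications, where $q\le B^{\nu}$ with $\nu\le 2/(9dm)$. It does not cover the statement as given, which has no restriction on $q$ and is used in Lemma~\ref{lem:mixed_reduction} and Proposition~\ref{prop:frobenius_trace} with constants independent of $q$.

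\textbf{A way to close it.} Split at $q=B^{1/(3d)}$.

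For $q\ge B^{1/(3d)}$, the main term is itself $\ll\frac{H(a,q)}{q}B^{5/6-1/(3d)}$, so an upper bound suffices. Get it orbit by orbit rather than class by class.
\begin{itemize}
\item The set $S_a$ is a union of at most $6H(a,q)/(q-1)$ orbits $\{(u^4b,u^6c):u\in\F_q^\times\}$.
\item Write the coefficients as $(A,C)$ to avoid the clash with the height bound. Representatives in the unit fundamental domain satisfy $|\sigma(A)|\ll B^{1/(3d)}$ and $|\sigma(C)|\ll B^{1/(2d)}$ for every embedding $\sigma$.
\item All successive minima of $\p$ are $\asymp_K q^{1/d}$. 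Hence each residue class modulo $\p$ contains $\ll 1+B^{1/2}/q$ admissible values of $C$ and $\ll 1+B^{1/3}/q$ admissible values of $A$.
\item On an orbit with $bc\neq 0$, $A$ fixes $C$ modulo $\p$ up to sign, giving $\ll B^{1/3}+B^{5/6}/q$ points. Orbits with $bc=0$ give $\ll B^{1/2}+B^{5/6}/q$.
\item Both bounds are $\ll B^{5/6-1/(3d)}$ in this range.
\end{itemize}

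For $q<B^{1/(3d)}\le B^{1/3}$, all successive minima of the rescaled lattice $B^{-1/(3d)}\p\oplus B^{-1/(2d)}\p$ are $\ll 1$. The codimension-one term then dominates all lower-order ones, including the constant. Your per-class count is then uniform, provided you truncate the M\"obius sum at a small power of $B$ and bound its tail in aggregate over all classes.
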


\begin{remark}\label{rem:finitely_many_local_conditions}
    Although Proposition \ref{prop:LocalConditions} and Proposition \ref{prop:aqLocalCondition} are stated for a local condition at a single place, they can be extended to local conditions at finitely many places by multiplying together the relevant densities, via \cite[Theorem 4.0.5]{Phi25}.
\end{remark}

We now collect some estimates on sums involving $H(a,q)$.

\begin{proposition}[{\cite[Proposition 6.2.2]{Phi25}}]\label{prop:ClassNumberSums}
Let $q$ be a power of a prime $p>3$. For any $\epsilon>0$ and $n\in \Z_{\geq0}$ we have the following estimates:
\begin{align}
\label{eq:H_sum_0} &\sum_{|a|\leq 2\sqrt{q}} H(a,q)= q^2-q,   \\
\label{eq:H_sum_2} &\sum_{|a|\leq 2\sqrt{q}} a^2 H(a,q)=q^3+O\left(q^{\frac{5}{2}+\epsilon}\right),\\
\label{eq:H_sum_odd} &\sum_{|a|\leq 2\sqrt{q}} a^{2n+1}H(a,q)=0, \text{ and }\\
\label{eq:H_sum_even} &\sum_{|a|\leq 2\sqrt{q}} a^{2n}H(a,q)\leq 2^{2n}q^{n+2}.
\end{align}
\end{proposition}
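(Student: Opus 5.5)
The four estimates are elementary statements about the finite-field counting function $H(a,q)$ of \eqref{eq:defH(a,q)}, so I would prove each directly over $\F_q$. The tools are: counting the singular pairs $(b,c)$, a quadratic-twist symmetry, a character-sum expansion of $a_q$, and the Hasse bound \eqref{eq:Hasse-bound}. I would treat the four estimates in the order \eqref{eq:H_sum_0}, \eqref{eq:H_sum_odd}, \eqref{eq:H_sum_even}, \eqref{eq:H_sum_2}, since the last is the only one requiring a genuine computation.

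For \eqref{eq:H_sum_0}, the sum $\sum_a H(a,q)$ counts all pairs $(b,c)\in\F_q^2$ with $\Delta(b,c)\neq 0$. It therefore equals $q^2$ minus the number of solutions of $4b^3+27c^2=0$. Since $p>3$, these solutions are parametrized bijectively by $t\in\F_q$ via $(b,c)=(-3t^2,2t^3)$. To see this, note that $t=0$ gives only $(0,0)$. If $c\neq0$, one recovers $t=-9c/(2b)$ from $b=-3t^2$ and $c=2t^3$. Hence there are exactly $q$ singular pairs, which gives $q^2-q$.

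For \eqref{eq:H_sum_odd}, fix a nonsquare $\delta\in\F_q^\times$. The map $(b,c)\mapsto(\delta^2b,\delta^3c)$ sends $E_{b,c}$ to its quadratic twist by $\delta$. It preserves $\Delta\neq0$ and satisfies $a_q(E_{\delta^2b,\delta^3c})=-a_q(E_{b,c})$. It is a bijection, so $H(a,q)=H(-a,q)$, and all odd moments vanish.

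For \eqref{eq:H_sum_even}, the Hasse bound gives $a^{2n}\le 2^{2n}q^n$ whenever $H(a,q)\neq0$. Combining this with \eqref{eq:H_sum_0} yields
\[
\sum_a a^{2n}H(a,q)\le 2^{2n}q^n(q^2-q)\le 2^{2n}q^{n+2}.
\]

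The main step is \eqref{eq:H_sum_2}. Let $\chi$ be the quadratic character of $\F_q$, so that $a_q(E_{b,c})=-\sum_{x\in\F_q}\chi(x^3+bx+c)$. I would first extend the sum $\sum_{(b,c)}a_q(E_{b,c})^2$ to all of $\F_q^2$. Using the character-sum definition for the $q$ singular pairs, each of which contributes at most $q^2$, this costs only $O(q^3)$. That is too crude, so I would bound it more carefully. For a singular pair the cubic has a repeated root, and the character sum is then $O(1)$ up to a term of size at most $q^{1/2}$ coming from a twisted linear sum. So the $q$ singular pairs contribute $O(q^2)$ in total, which is within $O(q^{5/2+\epsilon})$.

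Expanding the square over all $(b,c)\in\F_q^2$ gives $\sum_{x,y}\sum_{b,c}\chi(x^3+bx+c)\chi(y^3+by+c)$. For $x\neq y$, the affine map $(b,c)\mapsto(x^3+bx+c,\ y^3+by+c)$ is a bijection of $\F_q^2$, because its linear part has determinant $x-y\neq0$. The inner sum therefore factors as $\big(\sum_u\chi(u)\big)^2=0$. For $x=y$, the inner sum equals $q^2-q$, namely the number of $(b,c)$ with $x^3+bx+c\neq0$. Summing over the $q$ values of $x$ gives $q^3-q^2$. Altogether this yields $q^3+O(q^2)$, which implies \eqref{eq:H_sum_2}.

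The only delicate point is the bookkeeping for the singular pairs. If the refined bound there proves awkward, I would fall back on simply restricting the character-sum identity to nonsingular pairs and estimating the discrepancy by $\sum_{\Delta=0}|\sum_x\chi(\cdot)|^2$. For this it suffices that each such sum is $O(q^{3/4+\epsilon})$, which is still within the stated $O(q^{5/2+\epsilon})$.
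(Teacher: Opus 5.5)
Your proof is correct. For \eqref{eq:H_sum_0} and \eqref{eq:H_sum_2} it takes a different route from the paper, which does not reprove these two identities and simply cites \cite[Proposition 6.2.2]{Phi25}. Judging from the remark after the proposition, which gets the exact second moment by also invoking the vanishing of level-one cusp forms of weight $<12$, the cited argument goes through class-number sums and a trace-formula computation. For \eqref{eq:H_sum_odd} and \eqref{eq:H_sum_even} you argue exactly as the paper does. Your twist bijection $(b,c)\mapsto(\delta^2b,\delta^3c)$ also supplies the justification of $H(a,q)=H(-a,q)$, which the paper leaves implicit.

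Your count of the discriminant locus and the orthogonality computation $\sum_{(b,c)\in\F_q^2}\bigl(\sum_x\chi(x^3+bx+c)\bigr)^2=q^3-q^2$ are completely elementary and need no modular forms. They even give the exact formula if you handle the singular pairs precisely instead of with the hedged $q^{1/2}$ bound:
\begin{itemize}
\item For $(b,c)=(-3t^2,2t^3)$ the cubic is $(x-t)^2(x+2t)$.
\item Hence $-\sum_x\chi(x^3+bx+c)=\chi(3t)$ for $t\neq0$, and it is $0$ for $t=0$.
\item So the singular pairs contribute exactly $q-1$, and you recover $\sum_a a^2H(a,q)=q^3-q^2-q+1$, the identity in the paper's remark.
\end{itemize}
One small slip: the inverse of your parametrization is $t=-3c/(2b)$, not $-9c/(2b)$.

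What the trace-formula approach buys in exchange is uniformity. It evaluates all even moments $\sum_a a^{2n}H(a,q)$ with cusp-form error terms. Your two-point orthogonality trick does not extend directly beyond the second moment, because four linear forms in $(b,c)$ cannot be simultaneously straightened by an affine bijection. However, the paper only ever uses the trivial bound \eqref{eq:H_sum_even} for $n\geq 2$, so nothing is lost here.
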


\begin{proof}
   The estimates \eqref{eq:H_sum_0} and \eqref{eq:H_sum_2} are proven in \cite[Proposition 6.2.2]{Phi25}.

   The estimate \eqref{eq:H_sum_odd} follows by pairing the terms $a$ and $-a$, and noting that $H(a,q)=H(-a,q)$.
   
    For the estimate \eqref{eq:H_sum_even}, noting that $a^{2n}\leq 2^{2n}q^n$ for $|a|\leq 2\sqrt{q}$, we have
    \begin{equation}
        \sum_{|a|\leq 2\sqrt{q}} a^{2n} H(a,q) \leq 2^{2n}q^n \sum_{|a|\leq 2\sqrt{q}} H(a,q).
        %\leq 2^{2n}q^{n+2}.
    \end{equation}
    Then the desired inequality follows from \eqref{eq:H_sum_0}.   
\end{proof}

\begin{remark}
    Although we will not need this, we note that the estimate \eqref{eq:H_sum_2} can be improved to the exact formula
    \begin{equation}
        \sum_{|a|\leq 2\sqrt{q}} a^2 H(a,q) = q^3 - q^2 - q + 1.
    \end{equation}
   This can be obtained by following the same argument as in \cite[Proposition 6.2.2]{Phi25} and using the additional fact that the space of weight $k$ cusp forms of level $1$ is zero dimensional for all $k<12$ (see, e.g., \cite[Chapter 3]{DS05}).
\end{remark}

\section{Frobenius trace formula for elliptic curves over number fields}\label{sec:Frobenius_trace}

In this section we use the results of Section \ref{sec:Counting_ECs} to estimate sums of the form
\begin{equation}\label{eq:frobenius_sum_intro}
    \sum_{E\in \cE_K(B)} \prod_{i=1}^t \widehat{a_E}(\p_{v_i}),
\end{equation}
which appear in the sums $S_1(j,\phi,B)$ defined in equation \eqref{eq:S_1_sum}. Our main estimate is Proposition \ref{prop:frobenius_trace}, which proves an analog of the Frobenius trace formula of Cho and Jeong \cite[Theorem 3.1]{CJ23}.

Let $t\in \Z_{>0}$ be a positive integer, and for each $i\in \{1,2,\dots,t\}$ let
\begin{itemize}
    \item $\p_i$ be a prime ideal of $K$ not lying above $2$ or $3$, with $\p_i\neq \p_{i'}$ whenever $i\neq i'$, and
    \item $s_i\in \Z_{>0}$.
\end{itemize}
Define $q_i\defeq N_{K/\Q}(\p_i)$ to be the norm of $\p_i$.

\begin{proposition}\label{prop:frobenius_trace}\hfill
\begin{enumerate}[label=\textnormal{(\roman*)}]
    \item If $s_i=2$ for all $i$, then
\begin{salign}\label{eq:frob_trace_i}
    &\sum_{E\in \cE_K(B)} \prod_{i=1}^t \widehat{a_E}(\p_i)^{s_i}\\
    &\quad = \left(\prod_{i=1}^t q_i\right) \#\cE_K(B)
    +O_\varepsilon\!\left(\left(\sum_{k=1}^t q_k^{1/2+\epsilon} \prod_{\substack{i=1 \\ i \neq k}}^t q_i\right) B^{5/6} + \left(\prod_{i=1}^t q_i^2\right) B^{5/6-1/3d} \right).
\end{salign}
    \item If $s_i$ is even for all $i$, then
\begin{salign}\label{eq:frob_trace_ii}
    \sum_{E\in \cE_K(B)} \prod_{i=1}^t \widehat{a_E}(\p_i)^{s_i}
    \ \ll\ \left(\prod_{i=1}^t q_i^{s_i/2}\right)B^{5/6} + \left(\prod_{i=1}^t q_i^{s_i/2+1}\right) B^{5/6-1/3d}.
\end{salign}
    \item If $s_i$ is odd for some $i$, then
\begin{salign}\label{eq:frob_trace_iii}
    \sum_{E\in \cE_K(B)} \prod_{i=1}^t \widehat{a_E}(\p_i)^{s_i}
    \ \ll\ \left(\prod_{i=1}^t q_i^{-1}\right) B^{5/6}
    + \left(\prod_{i=1}^t q_i^{s_i/2+1}\right) B^{5/6-1/3d}.
\end{salign}
\end{enumerate}
\end{proposition}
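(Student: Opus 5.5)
We stratify the sum according to the local behaviour of $E$ at the finite set of primes $\p_1,\dots,\p_t$. For each $i$ the value $\widehat{a_E}(\p_i)$ depends only on the reduction of $E$ at $\p_i$. It equals $a$ if $E$ has good reduction with trace $a$ (with $|a|\le 2\sqrt{q_i}$), it equals $\pm1$ for split/nonsplit multiplicative reduction, and it equals $0$ for additive reduction. Hence
\begin{equation*}
\sum_{E\in\cE_K(B)}\prod_{i=1}^t\widehat{a_E}(\p_i)^{s_i}
=\sum_{(\sL_1,\dots,\sL_t)}\Big(\prod_{i=1}^t c(\sL_i)^{s_i}\Big)\,\#\{E\in\cE_K(B): E \text{ satisfies } \sL_i \text{ at } \p_i \ \forall i\},
\end{equation*}
where each $\sL_i$ ranges over ``good with trace $a$'' ($|a|\le 2\sqrt{q_i}$), ``split multiplicative'' and ``nonsplit multiplicative''. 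Here $c(\sL_i)$ denotes the corresponding value $a$, $1$ or $-1$; additive reduction contributes $0$ and is dropped. By Remark~\ref{rem:finitely_many_local_conditions}, combined with Propositions~\ref{prop:LocalConditions} and \ref{prop:aqLocalCondition}, each count equals $\kappa(K)\prod_i\kappa_{\sL_i}B^{5/6}$ plus an error term. In that main term the local density is $\frac{q^8}{q^{10}-1}H(a,q)$ for a trace condition and $\frac{q-1}{2q^2}\frac{q^{10}}{q^{10}-1}$ for a split or nonsplit condition. The error term is $O(\prod_i\epsilon_{\sL_i}B^{5/6-1/3d})$, with $\epsilon=H(a,q)/q$ or $1$ respectively.

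\emph{Main terms.} The main term factors as $\kappa(K)B^{5/6}\prod_i M_i$, where
\begin{equation*}
M_i=\frac{q_i^8}{q_i^{10}-1}\sum_{|a|\le2\sqrt{q_i}}a^{s_i}H(a,q_i)+\big(1+(-1)^{s_i}\big)\frac{q_i-1}{2q_i^2}\frac{q_i^{10}}{q_i^{10}-1}.
\end{equation*}
We evaluate $M_i$ with Proposition~\ref{prop:ClassNumberSums}.
\begin{itemize}
\item For $s_i=2$, \eqref{eq:H_sum_2} gives $M_i=q_i+O(q_i^{1/2+\epsilon})$, since the multiplicative contribution is only $O(1/q_i)$.
\item For general even $s_i$, \eqref{eq:H_sum_even} gives $M_i\ll q_i^{s_i/2}$.
\item For odd $s_i$, \eqref{eq:H_sum_odd} kills the trace sum, and the split and nonsplit contributions cancel exactly, so $M_i=0$.
\end{itemize}
In case (iii) the whole main term therefore vanishes, which gives something even stronger than the stated $\prod q_i^{-1}B^{5/6}$. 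For the other factors one can use $|M_j|\ll q_j^{s_j/2}$ if needed.

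To finish (i) we expand $\prod_i(q_i+O(q_i^{1/2+\epsilon}))$. The leading product is $\prod_i q_i$. Every other term contains at least one factor $O(q_k^{1/2+\epsilon})$ and is therefore bounded by $q_k^{1/2+\epsilon}\prod_{i\ne k}q_i$ for some $k$; the implied constant $2^t$ is harmless. We then replace $\kappa(K)B^{5/6}$ by $\#\cE_K(B)$. By Proposition~\ref{prop:LocalConditions} with no condition, or by summing over all local types, this costs $O(B^{5/6-1/3d})$ per unit of the coefficient, and that cost is absorbed into the error term.

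\emph{Error terms.} Summing the errors over all tuples gives
\begin{equation*}
\ll B^{5/6-1/3d}\prod_i\Big(\sum_{|a|\le 2\sqrt{q_i}}|a|^{s_i}\frac{H(a,q_i)}{q_i}+2\Big).
\end{equation*}
Using $|a|^{s_i}\le 2^{s_i}q_i^{s_i/2}$ and \eqref{eq:H_sum_0}, each factor is $\ll q_i^{s_i/2+1}$, which yields the second term in all three parts (with $q_i^2$ when $s_i=2$). The main obstacle is bookkeeping rather than depth. Extending the counting results to simultaneous conditions at $t$ distinct primes requires that the error for a product of conditions be the product of the single-prime $\epsilon$'s, and we take this from \cite[Theorem 4.0.5]{Phi25}. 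Implied constants are allowed to depend on $t$ and the $s_i$, which is acceptable since later use has $t\le m$.
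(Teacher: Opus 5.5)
Your argument is correct and uses the same ingredients as the paper's proof: stratification by reduction type at the $\p_i$, the product-density counts of Propositions \ref{prop:LocalConditions} and \ref{prop:aqLocalCondition} via Remark \ref{rem:finitely_many_local_conditions}, the $H(a,q)$ moment bounds, and the equal split/nonsplit densities. The paper instead partitions by the set $G$ of good-reduction indices and treats the mixed strata through Lemma \ref{lem:mixed_reduction}, whereas you factor the whole main term as $\kappa(K)B^{5/6}\prod_i M_i$; this handles every stratum at once and, because $M_i=0$ for odd $s_i$, shows that the term $\left(\prod_i q_i^{-1}\right)B^{5/6}$ in (iii) can be dropped (in the paper it comes from the trivial bound on the all-multiplicative stratum).
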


Before giving the proof, we record how the summand in \eqref{eq:frobenius_sum_intro} depends on the reduction type of $E$ at each $\p_i$. Recall from Section \ref{sec:L-functions} that
\begin{equation}\label{eq:a_hat_reduction_recall}
    \widehat{a_E}(\p_i)=
    \begin{cases}
        a_{\p_i}(E) & \text{ if $E$ has good reduction at $\p_i$,}\\
        b_{v_i} & \text{ if $E$ has bad reduction at $\p_i$.}
    \end{cases}
\end{equation}
%where $b_{v_i}\in\{+1,-1,0\}$ according as $E$ has split multiplicative, nonsplit multiplicative, or additive reduction at $\p_i$; see the definition of $b_v$ in Section \ref{sec:L-functions}. 
In particular, $|\widehat{a_E}(\p_i)|=1$ if $E$ has multiplicative reduction at $\p_i$, and $\widehat{a_E}(\p_i)=0$ if $E$ has additive reduction at $\p_i$.

For a subset $G\subseteq\{1,\dots,t\}$, write $M\defeq\{1,\dots,t\}\setminus G$, and let $\cE_K^{G,M}(B)$ denote the set of elliptic curves over $K$ of naive height at most $B$ that have good reduction at $\p_i$ for every $i\in G$ and multiplicative reduction at $\p_i$ for every $i\in M$. Since the reduction type of $E$ at each $\p_i$ is exactly one of good, multiplicative, or additive, and since the product in \eqref{eq:frobenius_sum_intro} vanishes as soon as $E$ has additive reduction at any $\p_i$, we may partition the sum \eqref{eq:frobenius_sum_intro} according to the set $G$ of indices at which $E$ has good reduction:
\begin{equation}\label{eq:reduction_partition}
    \sum_{E\in \cE_K(B)} \prod_{i=1}^t \widehat{a_E}(\p_i)^{s_i}
    = \sum_{G\subseteq\{1,\dots,t\}}\ \sum_{E\in \cE_K^{G,M}(B)} \prod_{i=1}^t \widehat{a_E}(\p_i)^{s_i}.
\end{equation}
The two extreme terms $G=\{1,\dots,t\}$ and $G=\emptyset$ correspond to good reduction at every $\p_i$ and to multiplicative reduction at every $\p_i$, respectively, while the other terms record the mixed cases in which $E$ has good reduction at some of the $\p_i$ and multiplicative reduction at the others.
The following lemma estimates the mixed contribution to \eqref{eq:reduction_partition}. 
%Part (a) is a general bound, used in Cases (i) and (ii) of Proposition \ref{prop:frobenius_trace}, in which the multiplicative places contribute a saving of $\prod_{i\in M}q_i^{-1}$ relative to the good-everywhere term. Part (b) records the stronger cancellation that occurs when some exponent $s_i$ is odd, and is used in Case (iii): whenever an odd exponent is present, every mixed pattern loses its main term altogether. Here the key arithmetic input is that split and nonsplit multiplicative reduction occur with equal leading density (see Table \ref{tab:LocalConditions}), so that the values $b_{v_i}=+1$ and $b_{v_i}=-1$ cancel when raised to an odd power and averaged over the family.

\begin{lemma}\label{lem:mixed_reduction}
    The following hold:
    \begin{enumerate}[label=\textnormal{(\alph*)}]
        \item In general,
        \begin{equation}\label{eq:mixed_reduction_bound}
            \left|\sum_{E\in \cE_K^{G,M}(B)} \prod_{i=1}^t \widehat{a_E}(\p_i)^{s_i}\right|
            \ \ll\
            \left(\prod_{i\in G} q_i^{s_i/2}\right)\left(\prod_{i\in M}\frac{1}{q_i}\right) B^{5/6}
            + \left(\prod_{i\in G} q_i^{s_i/2+1}\right) B^{5/6-1/3d}.
        \end{equation}
        \item If $s_{i_0}$ is odd for some $i_0\in\{1,\dots,t\}$, then 
        \begin{equation}\label{eq:mixed_reduction_bound_odd}
            \left|\sum_{E\in \cE_K^{G,M}(B)} \prod_{i=1}^t \widehat{a_E}(\p_i)^{s_i}\right|
            \ \ll\
            \left(\prod_{i\in G} q_i^{s_i/2+1}\right) B^{5/6-1/3d}.
        \end{equation}
    \end{enumerate}
\end{lemma}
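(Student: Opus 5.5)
We plan to stratify $\cE_K^{G,M}(B)$ further by local data at each $\p_i$, and count each stratum with Remark~\ref{rem:finitely_many_local_conditions}, which combines Propositions~\ref{prop:LocalConditions} and~\ref{prop:aqLocalCondition} at finitely many places. For $i\in G$ we fix the trace $a_i=a_{\p_i}(E)$ with $|a_i|\le 2\sqrt{q_i}$. For $i\in M$ we fix the multiplicative type $\epsilon_i\in\{\pm1\}$ (split or nonsplit), so that $b_{v_i}=\epsilon_i$. On such a stratum the summand equals the constant $\prod_{i\in G}a_i^{s_i}\prod_{i\in M}\epsilon_i^{s_i}$. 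The multiplicativity of densities gives the stratum count
\begin{equation*}
\kappa(K)\prod_{i\in G}\frac{q_i^8H(a_i,q_i)}{q_i^{10}-1}\prod_{i\in M}\frac{q_i-1}{2q_i^2}\frac{q_i^{10}}{q_i^{10}-1}\,B^{5/6}
+O\Big(\prod_{i\in G}\frac{H(a_i,q_i)}{q_i}\,B^{5/6-1/3d}\Big).
\end{equation*}
The error exponent comes from the factors $\epsilon_\sL=1$ for the split and nonsplit conditions in Table~\ref{tab:LocalConditions}. We then sum over all strata.

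For part (a), we bound the main terms in absolute value. We use $|a_i|^{s_i}\le 2^{s_i}q_i^{s_i/2}$ and $\sum_a H(a,q_i)=q_i^2-q_i$ from \eqref{eq:H_sum_0}. Each $i\in G$ then contributes $\ll q_i^{s_i/2}\,q_i^{8}q_i^2/q_i^{10}\ll q_i^{s_i/2}$. Each $i\in M$, after summing over the two signs, contributes $\ll 1/q_i$. For the error terms we sum $|a_i|^{s_i}H(a_i,q_i)/q_i$ over $a_i$, which gives $\ll q_i^{s_i/2}\cdot q_i^2/q_i=q_i^{s_i/2+1}$. The $i\in M$ factors contribute $O(1)$, since there are only two signs each. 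These bounds together give \eqref{eq:mixed_reduction_bound}. The implied constant depends on $t$ and the $s_i$, which is harmless.

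For part (b), we show that the total main term vanishes exactly, so that only the error sum from part (a) survives. There are two cases. If $i_0\in G$, the main term factors over $i$, and the $i_0$ factor is proportional to $\sum_a a^{s_{i_0}}H(a,q_{i_0})=0$ by \eqref{eq:H_sum_odd}. If $i_0\in M$, the $i_0$ factor is proportional to $\sum_{\epsilon=\pm1}\epsilon^{s_{i_0}}\cdot\frac{q-1}{2q^2}\frac{q^{10}}{q^{10}-1}$. This sum is zero because split and nonsplit multiplicative reduction have equal leading constants in Table~\ref{tab:LocalConditions}.

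The main obstacle is justifying the product structure of the leading constants for mixed conditions at several places. In particular, we need that the split/nonsplit refinement and the fixed-trace conditions combine multiplicatively with errors of the stated shape. This requires reading \cite[Theorem 4.0.5]{Phi25} as Remark~\ref{rem:finitely_many_local_conditions} asserts, with error factor $\prod_i\epsilon_{\sL_i}$ and with $H(a,q)/q$ in place of $\epsilon_\sL$ at the fixed-trace places. The exact factorization is essential for the cancellation in (b), because (b) needs the main terms to cancel exactly rather than merely be bounded.
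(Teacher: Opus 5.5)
Your proposal is correct and follows essentially the same route as the paper. Like the paper, you group by Frobenius traces at the places in $G$ and count via the combined local conditions of Remark~\ref{rem:finitely_many_local_conditions}. For (a) you bound via \eqref{eq:H_sum_0}, and for (b) you kill the main term using \eqref{eq:H_sum_odd} when $i_0\in G$ or the equal split/nonsplit densities when $i_0\in M$. Your one refinement---stratifying by split/nonsplit type at every place of $M$, so the summand $\prod_{i\in M}b_{v_i}^{s_i}$ is constant on each stratum---handles those signs more cleanly than the paper's part (b), which splits only at $i_0$ and, when $i_0\in G$, does not track these signs at all.
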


\begin{proof}
    \underline{Part (a):} If $E\in \cE_K^{G,M}(B)$, then $E$ has multiplicative reduction at $\p_i$ for each $i\in M$, so that $\widehat{a_E}(\p_i)^{s_i}\in\{\pm 1\}$. Consequently
    \begin{equation}\label{eq:mixed_absorb_M}
        \left|\prod_{i=1}^t \widehat{a_E}(\p_i)^{s_i}\right|
        =\prod_{i\in G}\left|a_{\p_i}(E)\right|^{s_i}.
    \end{equation}
    Grouping elliptic curves in $\cE_K^{G,M}(B)$ according to their traces of Frobenius $a_{\p_i}(E)=a_i$ at the good places $i\in G$, we have
    \begin{equation}\label{eq:mixed_group_by_trace}
        \left|\sum_{E\in \cE_K^{G,M}(B)} \prod_{i=1}^t \widehat{a_E}(\p_i)^{s_i}\right|
        \leq
        \sum_{\substack{(a_i)_{i\in G}\\ |a_i|\leq 2\sqrt{q_i}}}
        \left(\prod_{i\in G}|a_i|^{s_i}\right)
        \#\!\left\{E\in \cE_K^{G,M}(B) : a_{\p_i}(E)=a_i\ \forall i\in G \right\}.
    \end{equation}
    The curves counted on the right of \eqref{eq:mixed_group_by_trace} are exactly those of height at most $B$ satisfying the local condition of good reduction with trace $a_i$ at $\p_i$ for $i\in G$ and multiplicative reduction at $\p_i$ for $i\in M$. Combining the prescribed--trace count of Proposition \ref{prop:aqLocalCondition} at the places indexed by $G$ with the multiplicative local condition of Proposition \ref{prop:LocalConditions} at the places indexed by $M$, and extending to finitely many places via Remark \ref{rem:finitely_many_local_conditions}, this count is
    \begin{equation}\label{eq:mixed_count_input}
        \kappa(K)
        \left(\prod_{i\in G}\frac{q_i^{8}}{q_i^{10}-1}H(a_i,q_i)\right)
        \left(\prod_{i\in M}\frac{q_i-1}{q_i^2}\frac{q_i^{10}}{q_i^{10}-1}\right)B^{5/6}
        +O\!\left(\left(\prod_{i\in G}\frac{H(a_i,q_i)}{q_i}\right) B^{5/6-1/3d}\right).
    \end{equation}
    %where we have used that the multiplicative condition contributes a factor $\epsilon_\sL=1$ to the error term (see Table \ref{tab:LocalConditions}).

    We first bound the main term of \eqref{eq:mixed_count_input}. By the geometric series, we have 
    \begin{equation}\label{eq:geometric_series}
        \frac{q_i^{10}}{q_i^{10}-1}=1+O(q_i^{-10}).
    \end{equation}
    For each $i\in M$ we have $\tfrac{q_i-1}{q_i^2}\ll q_i^{-1}$. Since each $s_i$ is a positive integer, using the bound \eqref{eq:H_sum_even} when $s_i$ is even, and using the trivial bound 
    \begin{equation}
        \sum_{|a_i|\leq 2\sqrt{q_i}}|a_i|^{s_i}H(a_i,q_i)\leq (2\sqrt{q_i})^{s_i}\sum_{|a_i|\leq 2\sqrt{q_i}}H(a_i,q_i)\ll q_i^{s_i/2+2}
    \end{equation}
    (via \eqref{eq:H_sum_0}) in general, for each $i\in G$ we obtain 
    \begin{equation}\label{eq:mixed_H_sum}
        \sum_{|a_i|\leq 2\sqrt{q_i}}|a_i|^{s_i}\frac{q_i^{8}}{q_i^{10}-1}H(a_i,q_i)\ \ll\ q_i^{s_i/2}.
    \end{equation}
    Combining \eqref{eq:mixed_group_by_trace}, the main term of \eqref{eq:mixed_count_input}, and \eqref{eq:mixed_H_sum}, the main-term contribution is
    \begin{equation}\label{eq:mixed_main}
        \ll \left(\prod_{i\in G} q_i^{s_i/2}\right)\left(\prod_{i\in M}\frac{1}{q_i}\right) B^{5/6}.
    \end{equation}
    For the error term of \eqref{eq:mixed_count_input}, we interchange the order of summation and product and apply equation \eqref{eq:H_sum_0} together with the bound
    \begin{equation}
        |a_i|^{s_i}\leq (2\sqrt{q_i})^{s_i}\ll q_i^{s_i/2},
    \end{equation} 
    giving
    \begin{equation}\label{eq:mixed_error}
        O\!\left(\sum_{\substack{(a_i)_{i\in G}\\ |a_i|\leq 2\sqrt{q_i}}}\left(\prod_{i\in G}|a_i|^{s_i}\frac{H(a_i,q_i)}{q_i}\right) B^{5/6-1/3d}\right)
        = O\!\left(\left(\prod_{i\in G} q_i^{s_i/2+1}\right) B^{5/6-1/3d}\right).
    \end{equation}
    The bound \eqref{eq:mixed_reduction_bound} then follows from \eqref{eq:mixed_main} and \eqref{eq:mixed_error}.

\noindent
\underline{Part (b):} Suppose $s_{i_0}$ is odd for some index $i_0$. We distinguish two cases according to whether $i_0$ lies in $G$ or in $M$.

    \emph{The case $i_0\in G$.} We return to the count \eqref{eq:mixed_count_input}, whose main term factors over $i\in G$. The factor indexed by $i_0$ is
    \begin{equation}\label{eq:mixed_odd_good_vanish}
        \sum_{|a_{i_0}|\leq 2\sqrt{q_{i_0}}} a_{i_0}^{s_{i_0}}\,\frac{q_{i_0}^{8}}{q_{i_0}^{10}-1}H(a_{i_0},q_{i_0})=0,
    \end{equation}
    by equation \eqref{eq:H_sum_odd}. Hence the entire main term of \eqref{eq:mixed_count_input} vanishes, and only the error term of \eqref{eq:mixed_count_input} survives. Estimating this error term exactly as in \eqref{eq:mixed_error} yields the bound \eqref{eq:mixed_reduction_bound_odd}.

    \emph{The case $i_0\in M$.} In this case we exploit the cancellation between split and nonsplit multiplicative reduction. Recall from the definition of $b_v$ in Section \ref{sec:L-functions} that 
    \begin{equation}
        b_{v_{i_0}} = 
        \begin{cases}
            1 & \text{ if $E$ has split multiplicative reduction at $\p_{i_0}$,}\\
            -1 & \text{ if $E$ has nonsplit multiplicative reduction at $\p_{i_0}$.}
        \end{cases}
    \end{equation}
    Since $s_{i_0}$ is odd, $b_{v_{i_0}}^{s_{i_0}}=b_{v_{i_0}}$. Splitting $\cE_K^{G,M}(B)$ according to whether the reduction at $\p_{i_0}$ is split multiplicative or nonsplit multiplicative, and writing $\cE_K^{G,M,\pm}(B)$ for the corresponding subsets, we have
    \begin{equation}\label{eq:mixed_odd_mult_split}
        \sum_{E\in \cE_K^{G,M}(B)} \prod_{i=1}^t \widehat{a_E}(\p_i)^{s_i}
        = \sum_{E\in \cE_K^{G,M,+}(B)} \prod_{\substack{i=1\\ i\neq i_0}}^t \widehat{a_E}(\p_i)^{s_i}
        - \sum_{E\in \cE_K^{G,M,-}(B)} \prod_{\substack{i=1\\ i\neq i_0}}^t \widehat{a_E}(\p_i)^{s_i}.
    \end{equation}
    By Table \ref{tab:LocalConditions}, split multiplicative and nonsplit multiplicative reduction at $\p_{i_0}$ occur with the same density. Consequently, grouping each of the two sums on the right of \eqref{eq:mixed_odd_mult_split} by the traces $a_{\p_i}(E)=a_i$ at the good places $i\in G$ and applying the combined local count of Proposition \ref{prop:aqLocalCondition}, Proposition \ref{prop:LocalConditions}, and Remark \ref{rem:finitely_many_local_conditions}, the two main terms are identical and cancel in the difference \eqref{eq:mixed_odd_mult_split}. What remains is bounded by the sum of the two error terms, which by Proposition \ref{prop:aqLocalCondition} is $O(B^{5/6-1/3d})$.  Estimating the remaining multiplicative places exactly as in \eqref{eq:mixed_error}, we find that  
    \begin{equation}
        \left|\sum_{E\in \cE_K^{G,M}(B)} \prod_{i=1}^t \widehat{a_E}(\p_i)^{s_i}\right|
        \ \ll\ \left(\prod_{i\in G} q_i^{s_i/2+1}\right) B^{5/6-1/3d}.
    \end{equation}
\end{proof}

\begin{proof}[Proof of Proposition \ref{prop:frobenius_trace}]
    Throughout, for $G\subseteq\{1,\dots,t\}$ we write $M=\{1,\dots,t\}\setminus G$, and we use the reduction partition \eqref{eq:reduction_partition}, which expresses the sum in question as a sum over $G$ of the contributions of $\cE_K^{G,M}(B)$. The two extreme terms $G=\{1,\dots,t\}$ (good reduction at every $\p_i$) and $G=\emptyset$ (multiplicative reduction at every $\p_i$) are treated directly below; all other cases are addressed using Lemma  \ref{lem:mixed_reduction}.

    \medskip
    \noindent
    \textbf{Case (i): ($s_i=2$ for all $i$).}\\
    \emph{Good reduction at every $\p_i$ (i.e., $G=\{1,\dots,t\}$).} If $E$ has good reduction at $\p_i$ then $\widehat{a_E}(\p_i)^2=a_{\p_i}(E)^2$. Grouping by traces of Frobenius and applying Proposition \ref{prop:aqLocalCondition} together with  Remark \ref{rem:finitely_many_local_conditions}, we have
    \begin{salign}\label{eq:case_i_good}
        &\sum_{\substack{a_1,\dots,a_t \\ |a_i|\leq 2\sqrt{q_i}}} \sum_{\substack{E\in\cE_K(B)\\ a_{\p_i}(E)=a_i}}  \prod_{i=1}^t \widehat{a_E}(\p_i)^{2}
        =\sum_{\substack{a_1,\dots,a_t \\ |a_i|\leq 2\sqrt{q_i}}}  \prod_{i=1}^t \sum_{\substack{E\in\cE_K(B)\\ a_{\p_i}(E)=a_i}}  a_{\p_i}(E)^{2} \\
        &\quad =\sum_{\substack{a_1,\dots,a_t \\ |a_i|\leq 2\sqrt{q_i}}}
        \left(\left(\prod_{i=1}^t a_i^2\,\kappa(K)\frac{H(a_i,q_i)}{q_i^2}\frac{q_i^{10}}{q_i^{10}-1}\right) B^{5/6}
        +O\!\left(\left(\prod_{i=1}^t \frac{H(a_i,q_i)}{q_i}\right) B^{5/6-1/3d}\right)\right)\\
        &\quad =\kappa(K) \left(\prod_{i=1}^t \frac{q_i^{8}}{q_i^{10}-1}
        \sum_{|a_i|\leq 2\sqrt{q_i}} a_i^2 H(a_i,q_i)\right) B^{5/6}
        +O\!\left(\left(\prod_{i=1}^t q_i^2\right) B^{5/6-1/3d}\right).
    \end{salign}
    By the estimate \eqref{eq:H_sum_2}, and by the geometric series identity \eqref{eq:geometric_series}, we have 
    \begin{equation}\label{eq:case_i_leading}
        \frac{q_i^{8}}{q_i^{10}-1}\sum_{|a_i|\leq 2\sqrt{q_i}} a_i^2 H(a_i,q_i)
        = q_i + O\!\left(q_i^{1/2+\epsilon}\right).
    \end{equation}
    Expanding the product of the factors \eqref{eq:case_i_leading} over $i$ and separating the leading term $\prod_i q_i$ from the remaining terms, the good--reduction contribution \eqref{eq:case_i_good} becomes
    \begin{equation}\label{eq:case_i_good_final}
        \kappa(K)\left(\prod_{i=1}^t q_i\right) B^{5/6}
        +O_\varepsilon\!\left(\left(\sum_{k=1}^t q_k^{1/2+\epsilon}\prod_{\substack{i=1\\ i\neq k}}^t q_i\right) B^{5/6}
        + \left(\prod_{i=1}^t q_i^2\right) B^{5/6-1/3d}\right).
    \end{equation}
    Since $\#\cE_K(B)=\kappa(K)B^{5/6}+O(B^{5/6-1/3d})$, the main term of \eqref{eq:case_i_good_final} equals $\left(\prod_{i=1}^t q_i\right)\#\cE_K(B)$ up to an error of size $O\!\left(\left(\prod_i q_i\right)B^{5/6-1/3d}\right)$, which is absorbed by the second error term of \eqref{eq:case_i_good_final}.

    \emph{Multiplicative reduction at every $\p_i$ (i.e., $G=\emptyset$).} If $E$ has multiplicative reduction at $\p_i$ then $\widehat{a_E}(\p_i)^2=b_{v_i}^2=1$. Hence, by the multiplicative local condition of Proposition \ref{prop:LocalConditions} and Remark \ref{rem:finitely_many_local_conditions},
    \begin{salign}\label{eq:case_i_mult}
        \left|\sum_{E\in \cE_K^{\emptyset,\{1,\dots,t\}}(B)} \prod_{i=1}^t \widehat{a_E}(\p_i)^{2}\right|
        &= \#\cE_K^{\emptyset,\{1,\dots,t\}}(B) \\
        &=\kappa(K) \left(\prod_{i=1}^t \frac{q_i-1}{q_i^2}\frac{q_i^{10}}{q_i^{10}-1}\right) B^{5/6}+O\!\left(B^{5/6-1/3d}\right) \\
        &=O\!\left(\left(\prod_{i=1}^t \frac{1}{q_i}\right) B^{5/6}\right).
    \end{salign}

    \emph{Mixed reduction.} For each subset $G\notin\{\emptyset, \{1,\dots,t\}\}$ of $\{1,\dots,t\}$, Lemma \ref{lem:mixed_reduction}(a), with $s_i=2$ for all $i$, gives
    \begin{equation}\label{eq:case_i_mixed}
        \left|\sum_{E\in \cE_K^{G,M}(B)} \prod_{i=1}^t \widehat{a_E}(\p_i)^{2}\right|
        \ \ll\ \left(\prod_{i\in G} q_i\right)\left(\prod_{i\in M}\frac{1}{q_i}\right) B^{5/6}
        + \left(\prod_{i\in G} q_i^{2}\right) B^{5/6-1/3d}.
    \end{equation}
    Since $M\neq\emptyset$, the main term on the right of inequality \eqref{eq:case_i_mixed} is at most $\left(\prod_{i\in G}q_i\right)q_{k}^{-1}\,B^{5/6}$ for any fixed $k\in M$, which is bounded by $\left(\sum_{k=1}^t q_k^{1/2+\epsilon}\prod_{i\neq k}q_i\right)B^{5/6}$; and the error term is bounded by $\left(\prod_{i=1}^t q_i^{2}\right)B^{5/6-1/3d}$. Thus each mixed term is absorbed into the two error terms of \eqref{eq:case_i_good_final}. 
    
    As there are at most $2^t=O_t(1)$ subsets $G$, summing the finitely many contributions \eqref{eq:case_i_good_final}, \eqref{eq:case_i_mult}, and \eqref{eq:case_i_mixed} over $G$ via the partition \eqref{eq:reduction_partition} yields
    \begin{equation}
        \sum_{E\in \cE_K(B)} \prod_{i=1}^t \widehat{a_E}(\p_i)^{2}
        = \left(\prod_{i=1}^t q_i\right)\#\cE_K(B)
        +O_\varepsilon\!\left(\left(\sum_{k=1}^t q_k^{1/2+\epsilon}\prod_{\substack{i=1\\ i\neq k}}^t q_i\right) B^{5/6}
        + \left(\prod_{i=1}^t q_i^{2}\right) B^{5/6-1/3d}\right).
    \end{equation}

    \medskip
    \noindent
    \textbf{Case (ii): ($s_i$ even for all $i$).}\\
    \emph{Good reduction at every $\p_i$ (i.e., $G=\{1,\dots,t\}$).} Grouping by traces of Frobenius and applying Proposition \ref{prop:aqLocalCondition} together with Remark \ref{rem:finitely_many_local_conditions}, we have
    \begin{salign}\label{eq:case_ii_good}
        \sum_{\substack{a_1,\dots, a_t \\ |a_i|\leq 2\sqrt{q_i}}} \sum_{\substack{E\in\cE_K(B)\\ a_{\p_i}(E)=a_i}}  \prod_{i=1}^t \widehat{a_E}(\p_i)^{s_i}
        &=
        \left(\prod_{i=1}^t
        \sum_{|a_i|\leq 2\sqrt{q_i}}a_i^{s_i}\,\kappa(K)\frac{H(a_i,q_i)}{q_i^2}\frac{q_i^{10}}{q_i^{10}-1}\right)
        B^{5/6}\\
        &\qquad +  O\!\left( \sum_{\substack{a_1,\dots,a_t \\ |a_i|\leq 2\sqrt{q_i}}}
       \left(\prod_{i=1}^t |a_i|^{s_i} \frac{H(a_i,q_i)}{q_i}\right)B^{5/6-1/3d}\right).
    \end{salign}
    Since $s_i$ is even for each $i$, the estimate \eqref{eq:H_sum_even}, together with equation \eqref{eq:geometric_series}, shows that the main term of \eqref{eq:case_ii_good} is
    \begin{equation}\label{eq:case_ii_good_main}
        O\!\left(\left(\prod_{i=1}^t q_i^{s_i/2}\right) B^{5/6}\right).
    \end{equation}
    For the error term of \eqref{eq:case_ii_good}, interchanging the order of the sum and the product and applying the bound \eqref{eq:H_sum_even} yields
    \begin{equation}\label{eq:case_ii_good_error}
        O\!\left(\left(\prod_{i=1}^t q_i^{s_i/2+1}\right) B^{5/6-1/3d}\right).
    \end{equation}

    \emph{Multiplicative reduction at every $\p_i$ (i.e., $G=\emptyset$).} As in Case (i), if $E$ has multiplicative reduction at every $\p_i$ then $|\widehat{a_E}(\p_i)^{s_i}|=1$, so by Proposition \ref{prop:LocalConditions} and Remark \ref{rem:finitely_many_local_conditions},
    \begin{equation}\label{eq:case_ii_mult}
        \left|\sum_{E\in \cE_K^{\emptyset,\{1,\dots,t\}}(B)} \prod_{i=1}^t \widehat{a_E}(\p_i)^{s_i}\right|
        = \#\cE_K^{\emptyset,\{1,\dots,t\}}(B)
        =O\!\left(\left(\prod_{i=1}^t \frac{1}{q_i}\right) B^{5/6}\right).
    \end{equation}

    \emph{Mixed reduction.} For each subset $G$ with $M\neq\emptyset$, Lemma \ref{lem:mixed_reduction}(a) gives
    \begin{equation}\label{eq:case_ii_mixed}
        \left|\sum_{E\in \cE_K^{G,M}(B)} \prod_{i=1}^t \widehat{a_E}(\p_i)^{s_i}\right|
        \ \ll\ \left(\prod_{i\in G} q_i^{s_i/2}\right)\left(\prod_{i\in M}\frac{1}{q_i}\right) B^{5/6}
        + \left(\prod_{i\in G} q_i^{s_i/2+1}\right) B^{5/6-1/3d}.
    \end{equation}
    Since $M\neq\emptyset$, the main term of inequality \eqref{eq:case_ii_mixed} is bounded by $\left(\prod_{i=1}^t q_i^{s_i/2}\right)B^{5/6}$, and its error term is bounded by $\left(\prod_{i=1}^t q_i^{s_i/2+1}\right)B^{5/6-1/3d}$. Hence each mixed term is dominated by the good--reduction bounds \eqref{eq:case_ii_good_main} and \eqref{eq:case_ii_good_error}. 
    
    Summing the $O_t(1)$ contributions over $G$ via the partition \eqref{eq:reduction_partition}, and noting that the multiplicative bound \eqref{eq:case_ii_mult} is likewise dominated by \eqref{eq:case_ii_good_main}, we obtain
    \begin{equation}
        \sum_{E\in \cE_K(B)} \prod_{i=1}^t \widehat{a_E}(\p_i)^{s_i}
        \ \ll\ \left(\prod_{i=1}^t q_i^{s_i/2}\right)B^{5/6} + \left(\prod_{i=1}^t q_i^{s_i/2+1}\right) B^{5/6-1/3d}.
    \end{equation}

    \medskip
    \noindent
    \textbf{Case (iii): ($s_i$ odd for some $i$).}
    Without loss of generality, suppose that $s_1$ is odd.

    \emph{Good reduction at every $\p_i$ (i.e., $G=\{1,\dots,t\}$).} Exactly as in the computation \eqref{eq:case_ii_good}, Proposition \ref{prop:aqLocalCondition} and Remark \ref{rem:finitely_many_local_conditions} give
    \begin{salign}\label{eq:case_iii_good}
        &\sum_{\substack{a_1,\dots, a_t \\ |a_i|\leq 2\sqrt{q_i}}} \sum_{\substack{E\in\cE_K(B)\\ a_{\p_i}(E)=a_i}}  \prod_{i=1}^t \widehat{a_E}(\p_i)^{s_i}\\
        &\quad =
        \left(\prod_{i=1}^t
        \sum_{|a_i|\leq 2\sqrt{q_i}}a_i^{s_i}\,\kappa(K)\frac{H(a_i,q_i)}{q_i^2}\frac{q_i^{10}}{q_i^{10}-1}\right)
        B^{5/6}
        +  O\!\left( \sum_{\substack{a_1,\dots,a_t \\ |a_i|\leq 2\sqrt{q_i}}}
       \left(\prod_{i=1}^t |a_i|^{s_i} \frac{H(a_i,q_i)}{q_i}\right)B^{5/6-1/3d}\right).
    \end{salign}
    Since $s_1$ is odd, estimate \eqref{eq:H_sum_odd} gives
    \begin{equation}\label{eq:case_iii_vanish}
        \sum_{|a_1|\leq 2\sqrt{q_1}}a_1^{s_1}H(a_1,q_1)\,\frac{q_1^8}{q_1^{10}-1}=0,
    \end{equation}
    so the $i=1$ factor of the main term of \eqref{eq:case_iii_good} vanishes, and hence the entire main term of \eqref{eq:case_iii_good} vanishes.

    For the error term of \eqref{eq:case_iii_good}, we bound $|a_i|^{s_i}\leq (2\sqrt{q_i})^{s_i}\ll q_i^{s_i/2}$ and interchange the order of the sum and the product:
    \begin{salign}\label{eq:case_iii_error}
        & O\!\left(\sum_{\substack{a_1,\dots,a_t \\ |a_i|\leq 2\sqrt{q_i}}}
        \left(\prod_{i=1}^t |a_i|^{s_i} \frac{H(a_i,q_i)}{q_i}\right)B^{5/6-1/3d}\right) \\
        &\qquad
        = O\!\left(\left(\prod_{i=1}^t q_i^{s_i/2-1}\sum_{|a_i|\leq 2\sqrt{q_i}} H(a_i,q_i)\right) B^{5/6-1/3d}\right).
    \end{salign}
    Applying the estimate \eqref{eq:H_sum_0}, the error term \eqref{eq:case_iii_error} is
    \begin{equation}\label{eq:case_iii_good_final}
        O\!\left(\left(\prod_{i=1}^t q_i^{s_i/2+1}\right) B^{5/6-1/3d}\right).
    \end{equation}

    \emph{Multiplicative reduction at every $\p_i$ (i.e., $G=\emptyset$).} If $E$ has multiplicative reduction at every $\p_i$ then $|\widehat{a_E}(\p_i)^{s_i}|=1$, so by Proposition \ref{prop:LocalConditions} and Remark \ref{rem:finitely_many_local_conditions},
    \begin{equation}\label{eq:case_iii_mult}
        \left|\sum_{E\in \cE_K^{\emptyset,\{1,\dots,t\}}(B)} \prod_{i=1}^t \widehat{a_E}(\p_i)^{s_i}\right|
        \leq \#\cE_K^{\emptyset,\{1,\dots,t\}}(B)
        \ll \left(\prod_{i=1}^t \frac{1}{q_i}\right) B^{5/6}+B^{5/6-1/3d}.
    \end{equation}

    \emph{Mixed reduction.} Since $s_1$ is odd, Lemma \ref{lem:mixed_reduction}(b) applies to every subset $G$ with $M\neq\emptyset$ and gives
    \begin{equation}\label{eq:case_iii_mixed}
        \left|\sum_{E\in \cE_K^{G,M}(B)} \prod_{i=1}^t \widehat{a_E}(\p_i)^{s_i}\right|
        \ \ll\ \left(\prod_{i\in G} q_i^{s_i/2+1}\right) B^{5/6-1/3d}
        \ \leq\ \left(\prod_{i=1}^t q_i^{s_i/2+1}\right) B^{5/6-1/3d}.
    \end{equation}
    In particular, each mixed contribution has no main term of order $B^{5/6}$ and is bounded by the second term of the asserted bound \eqref{eq:frob_trace_iii}.

    Summing the $O_t(1)$ contributions \eqref{eq:case_iii_good_final}, \eqref{eq:case_iii_mult}, and \eqref{eq:case_iii_mixed} over the subsets $G$ via the partition \eqref{eq:reduction_partition}, we obtain
    \begin{equation}
        \sum_{E\in \cE_K(B)} \prod_{i=1}^t \widehat{a_E}(\p_i)^{s_i}
        \ \ll\ \left(\prod_{i=1}^t q_i^{-1}\right) B^{5/6}
        + \left(\prod_{i=1}^t q_i^{s_i/2+1}\right) B^{5/6-1/3d}.
    \end{equation}
\end{proof}

\section{Bounding moments of analytic ranks of elliptic curves}\label{sec:moment_bound}

In this section we prove our bound for moments of analytic ranks of elliptic curves over number fields. To do this we prove an asymptotic for the moments of the $1$-level density of elliptic curves over number fields (Theorem \ref{thm:1-level_moment_bound}), which we do using results from Section \ref{sec:Frobenius_trace} to bound the sum $S_1(j,\phi,B)$ defined in equation \ref{eq:S_1_sum}.

\begin{lemma}\label{lem:S_1_j_odd}
    If $j$ is odd and the support of $\widehat{\phi}$ is contained in the interval $(-\nu, \nu)$, then
    \begin{equation}
        S_1(j, \phi, B) 
        \ll_\epsilon
        1 + B^{3j\nu/2-1/3d}\log(B)^j.
    \end{equation}
\end{lemma}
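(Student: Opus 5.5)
Starting from the expanded form \eqref{eq:S_1_sum}, I would reorganize the sum over ordered $j$-tuples $(v_1,\dots,v_j)$ by the pattern of coincidences among the $v_i$. Each tuple determines distinct places $\p_1,\dots,\p_t$ with multiplicities $s_1,\dots,s_t$, where $s_1+\cdots+s_t=j$. The number of tuples giving a fixed multiset is at most $j!$. The product $\prod_i \widehat{a_E}(\p_{v_i})$ then becomes $\prod_{k=1}^t\widehat{a_E}(\p_k)^{s_k}$. Since $j$ is odd, at least one $s_k$ is odd, so Proposition~\ref{prop:frobenius_trace}(iii) applies to every term. The support hypothesis means $\widehat\phi(\log q_v/\log B)=0$ unless $q_v<B^\nu$, so only places of norm below $B^\nu$ contribute. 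I would bound $|\widehat\phi|$ by $\|\widehat\phi\|_\infty$ and then handle the two terms of \eqref{eq:frob_trace_iii} separately.

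\textbf{Main term.} Here $\prod_k q_k^{-1}B^{5/6}$, after dividing by $\#\cE_K(B)\asymp B^{5/6}$, gives a contribution of
\[
\ll \sum_{t}\ \sum_{\p_1,\dots,\p_t}\ \prod_{k=1}^t \frac{\log q_k}{q_k^{s_k}}\cdot\frac{1}{q_k}.
\]
Each factor satisfies $\sum_{\p}\log q/q^{s+1}\le \sum_\p \log q/q^2=O_K(1)$, because $s_k\ge 1$. This term is therefore $O_j(1)$, which accounts for the ``$1$'' in the statement.

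\textbf{Error term.} Here $\prod_k q_k^{s_k/2+1}B^{5/6-1/3d}$, after normalizing, gives
\[
B^{-1/3d}\prod_{k=1}^t\ \sum_{q_{k}<B^\nu}\frac{\log q_k}{q_k^{s_k}}\, q_k^{s_k/2+1}
= B^{-1/3d}\prod_{k=1}^t \sum_{q_k<B^\nu} q_k^{1-s_k/2}\log q_k .
\]
By the prime ideal theorem, $\sum_{q<Y}q^{a}\log q\ll Y^{a+1}$ for $a\ge 0$. For $a<0$ the sum is at most $\pi_K(Y)\log Y \ll Y$, and in general it is $\ll Y^{\max(a+1,1)}$ up to a log. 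Each factor is therefore $\ll B^{\nu(2-s_k/2)}\log B$ when $s_k\le 2$, and at most $B^{\nu}\log B$ otherwise.

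\textbf{Exponent check.} It remains to show that the resulting total exponent is at most $3j\nu/2$. For $s=1$ the factor exponent is $3\nu/2=\tfrac32 s\nu$. For $s=2$ it is $\nu\le 3\nu$. For $s\ge3$ it is $\nu\le \tfrac32 s\nu$. Multiplying over $k$ gives exponent $\le \sum_k \tfrac32 s_k\nu=3j\nu/2$, and there are at most $t\le j$ logarithms, each $\ll\log B$. Summing over the $O_j(1)$ coincidence patterns then yields $S_1(j,\phi,B)\ll 1+B^{3j\nu/2-1/3d}\log(B)^j$.

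\textbf{Expected obstacle.} The main care is the bookkeeping of coincidences: the $\p_k$ must be distinct, as Proposition~\ref{prop:frobenius_trace} requires, and the combinatorial multiplicities and implied constants must stay dependent only on $j$, $\phi$ and $K$. I would also check that discarding places above $2$ and $3$ is consistent with the hypotheses of that proposition. The analytic estimates themselves are routine applications of the prime ideal theorem.
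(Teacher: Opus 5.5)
Your proposal is correct and follows the paper's argument: you decompose by coincidence pattern $(s_1,\dots,s_t)$, apply Proposition~\ref{prop:frobenius_trace}(iii) to each pattern, and bound the two resulting terms via $\sum_{\p}\log q/q^2=O(1)$ and the prime ideal theorem. Your explicit exponent check (each factor contributes at most $B^{3s_k\nu/2}$ up to logarithms) usefully justifies the paper's bare assertion that the all-$s_i=1$ pattern dominates; your one slip is that the weights carry $(\log q_k)^{s_k}$ rather than $\log q_k$, which is harmless because the total power of $\log B$ is still $\sum_k s_k=j$.
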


\begin{proof}
We may write $S_1(j,\phi,B)$ as
\begin{salign}
    S_1(j, \phi, B)
    &= \frac{1}{\#\cE_{K}(B)} \sum_{\substack{v_1,\dots,v_j\in \Val_0(K) \\ 2\nmid q_{v_i},\ 3\nmid q_{v_i}}} \prod_{i=1}^j \frac{\log(q_{v_i})}{q_{v_i}} \cdot \widehat{\phi}\left( \frac{\log(q_{v_i})}{\log(B)}\right)
\sum_{E\in\cE_{K}(B)}  \prod_{i=1}^j \widehat{a_E}(\p_{v_i})\\
     &= \frac{1}{\#\cE_{K}(B)} \sum_{s_1+\cdots+s_t = j}
     \sum_{\substack{v_1,\dots,v_t\in \Val_0(K) \\ v_i\neq v_{i'} \\ 2\nmid q_{v_i},\ 3\nmid q_{v_i}}} \prod_{i=1}^t \frac{\log(q_{v_i})^{s_i}}{q_{v_i}^{s_i}} \cdot \widehat{\phi}\left( \frac{\log(q_{v_i})}{\log(B)}\right)^{s_i}
\sum_{E\in\cE_{K}(B)}  \prod_{i=1}^t \widehat{a_E}(\p_{v_i})^{s_i}.
\end{salign}
Since $j$ is odd, at least one of the $s_i$ must be odd, and therefore, by Proposition \ref{prop:frobenius_trace}, we have that 
    \begin{salign}
       S_1(j,\phi, B) \ll_\epsilon \frac{1}{\#\cE_{K}(B)} \sum_{s_1+\cdots+s_t = j}\sum_{\substack{v_1,\dots,v_t\in \Val_0(K) \\ v_i\neq v_{i'} \\ 2\nmid q_{v_i},\ 3\nmid q_{v_i}}} \prod_{i=1}^t \frac{\log(q_{v_i})^{s_i}}{q_{v_i}^{s_i}} \cdot \widehat{\phi}\left( \frac{\log(q_{v_i})}{\log(B)}\right)^{s_i} \\
       \times \left( \left(\prod_{i=1}^t q_i^{-1}\right) B^{5/6}
    + \left(\prod_{i=1}^t q_i^{s_i/2+1}\right) B^{5/6-1/3d}\right).
    \end{salign}
    We see that the sum is dominated by the $s_1=s_2=\cdots=s_t=1$ term (i.e., when $t=j$), so that 
    \begin{salign}
        S_1(j,\phi,B) \ll_\epsilon \frac{1}{\#\cE_{K}(B)} \sum_{\substack{v_1,\dots,v_j\in \Val_0(K) \\ v_i\neq v_{i'} \\ 2\nmid q_{v_i},\ 3\nmid q_{v_i}}} \prod_{i=1}^j \frac{\log(q_{v_i})}{q_{v_i}} \cdot \widehat{\phi}\left( \frac{\log(q_{v_i})}{\log(B)}\right) \\
       \times \left( \left(\prod_{i=1}^j q_i^{-1}\right) B^{5/6}
    + \left(\prod_{i=1}^j q_i^{3/2}\right) B^{5/6-1/3d}\right).
    \end{salign}
  
    Since $\#\cE_K(B)\asymp B^{5/6}$, this quantity is
    \begin{salign}
        \ll_\epsilon \sum_{\substack{v_1,\dots,v_j\in \Val_0(K) \\ v_i\neq v_{i'} \\ 2\nmid q_{v_i},\ 3\nmid q_{v_i}}} \prod_{i=1}^j \frac{\log(q_{v_i})}{q_{v_i}} \cdot \widehat{\phi}\left( \frac{\log(q_{v_i})}{\log(B)}\right) \\
       \times \left( \left(\prod_{i=1}^j q_i^{-1}\right)
    + \left(\prod_{i=1}^j q_i^{3/2}\right) B^{-1/3d}\right).
    \end{salign}
      By the assumption that $\widehat{\phi}$ is supported on $(-\nu,\nu)$, this is
       \begin{salign}
        \ll_{\epsilon,\phi} \sum_{\substack{v_1,\dots,v_j\in \Val_0(K) \\ v_i\neq v_{i'} \\ 2\nmid q_{i},\ 3\nmid q_{i}\\ q_i\leq B^\nu}} \prod_{i=1}^j \frac{\log(q_{v_i})}{q_{v_i}} 
       \left( \left(\prod_{i=1}^j q_i^{-1}\right)
    + \left(\prod_{i=1}^j q_i^{3/2}\right) B^{-1/3d}\right).
    \end{salign}
    We have
    \begin{equation}
        \sum_{q\leq B^\nu} \frac{\log(q)}{q^2}=O(1)
    \end{equation}
    and by the prime ideal theorem, we have
    \begin{equation}
        \sum_{q\leq B^\nu} q^{1/2}\log(q)\sim B^{3\nu/2}\log(B).
    \end{equation}
    This leads to the bound
    \begin{salign}
        S_1(j,\phi,B) \ll_{\epsilon,\phi} 1 + B^{3j\nu/2-1/3d}\log(B)^j.
    \end{salign}
\end{proof}

\begin{lemma}\label{lem:S_1_j_even}
    If $j$ is even and the support $\widehat{\phi}$ is contained in the interval $(-\nu, \nu)$, then
    \begin{equation}
        S_1(j, \phi, B) 
        = \frac{j! \log(B)^{j}}{(j/2)! 2^{j}}  \left(\int_{\R} |u| \widehat{\phi}\left( u\right)^{2} du \right)^{j/2}
        + O_{\nu}\left(\log(B)^{j-2} + B^{3j\nu/2 - 1/3d}\log(B)^{j}\right).
    \end{equation}
\end{lemma}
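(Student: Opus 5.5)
The plan is to repeat the decomposition used in the proof of Lemma~\ref{lem:S_1_j_odd}. We expand $U_1(\phi,E,B)^j$ and group the $j$-tuples $(v_1,\dots,v_j)$ by the set partition of $\{1,\dots,j\}$ recording which indices name the same place. This writes $S_1(j,\phi,B)$ as a finite sum, over set partitions with block sizes $s_1,\dots,s_t$ (so $s_1+\cdots+s_t=j$), of
\begin{equation*}
\frac{1}{\#\cE_K(B)}\sum_{\substack{v_1,\dots,v_t \text{ distinct}\\ 2\nmid q_{v_i},\ 3\nmid q_{v_i}}}\ \prod_{i=1}^t \frac{\log(q_{v_i})^{s_i}}{q_{v_i}^{s_i}}\,\widehat\phi\!\left(\frac{\log q_{v_i}}{\log B}\right)^{s_i}\sum_{E\in\cE_K(B)}\prod_{i=1}^t\widehat{a_E}(\p_{v_i})^{s_i}.
\end{equation*}
Since $\widehat\phi$ is supported in $(-\nu,\nu)$, every $q_{v_i}$ is at most $B^\nu$. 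The partitions into pairs ($s_i=2$ for all $i$, $t=j/2$) number $\frac{j!}{(j/2)!\,2^{j/2}}$ and will produce the main term. Every other partition will be shown to be $O(\log(B)^{j-2}+B^{3j\nu/2-1/3d}\log(B)^j)$.

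\textbf{Non-pair partitions.} If some $s_i$ is odd, Proposition~\ref{prop:frobenius_trace}(iii) applies. The $B^{5/6}$ part contributes $\prod_i\sum_q \log(q)^{s_i}q^{-s_i-1}=O(1)$. The $B^{5/6-1/3d}$ part contributes
\begin{equation*}
B^{-1/3d}\prod_i\sum_{q\le B^\nu}\log(q)^{s_i}\,q^{1-s_i/2}\ll B^{-1/3d}\prod_i B^{\nu(2-s_i/2)}\log(B)^{s_i}\ll B^{3j\nu/2-1/3d}\log(B)^j,
\end{equation*}
because $2-s/2\le \tfrac32 s$ for every $s\ge1$. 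If all $s_i$ are even but some $s_{i}\ge4$, Proposition~\ref{prop:frobenius_trace}(ii) gives a $B^{5/6}$ part of size $\prod_i\sum_q\log(q)^{s_i}q^{-s_i/2}$. Each factor with $s_i\ge4$ converges. Each factor with $s_i=2$ is $O(\log(B)^2)$. Since at least one block has size $\ge4$, there are at most $(j-4)/2$ pairs, so this part is $O(\log(B)^{j-4})$. Its error part is bounded by the same $B^{3j\nu/2-1/3d}\log(B)^j$ computation.

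\textbf{Pair partitions: reduction to a product.} Proposition~\ref{prop:frobenius_trace}(i) replaces the inner sum by $\prod_i q_{v_i}\cdot\#\cE_K(B)$ plus two errors.
\begin{itemize}
\item The term $q_k^{1/2+\epsilon}\prod_{i\ne k}q_i$ makes the $k$-th factor $\sum_q\log(q)^2q^{-3/2+\epsilon}=O(1)$ and leaves $j/2-1$ factors of size $O(\log(B)^2)$. The total is $O(\log(B)^{j-2})$, which is exactly where the saving of two logarithms comes from.
\item The $B^{5/6-1/3d}$ error is handled as above.
\end{itemize}
The main term is $\sum_{v_1,\dots,v_{j/2}\text{ distinct}}\prod_i g(v_i)$ with $g(v)=\frac{\log(q_v)^2}{q_v}\widehat\phi(\log q_v/\log B)^2$. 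Removing the distinctness condition changes this by sums in which two indices coincide. Such a sum contains a factor $\sum_v g(v)^2\ll\sum\log(q)^4q^{-2}=O(1)$ and at most $j/2-2$ factors of size $O(\log(B)^2)$, so the change is $O(\log(B)^{j-4})$. The main term therefore becomes $\bigl(\sum_v g(v)\bigr)^{j/2}$ up to admissible error.

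\textbf{The one-place sum — the delicate step.} We need
\begin{equation*}
\sum_v g(v)=\frac{\log(B)^2}{2}\int_\R|u|\,\widehat\phi(u)^2\,du+O(1)
\end{equation*}
with an $O(1)$ error rather than $O(\log B)$, since an $O(\log B)$ error here would only give $O(\log(B)^{j-1})$ overall. Only degree-one primes matter, as the others contribute $O(1)$. Let $L=\log B$ and $h(t)=\log(t)\,\widehat\phi(\log t/L)^2/t$, so that $g(v)=\log(q_v)\,h(q_v)$. We apply Abel summation against $\theta_K(t)=\sum_{q_v\le t}\log q_v=t+R(t)$, where the prime ideal theorem gives $R(t)\ll t/\log(t)^{3}$. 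The main part is
\begin{equation*}
\int_2^\infty h(t)\,dt=L^2\int_{u_0}^\infty u\,\widehat\phi(u)^2\,du,\qquad u_0=\frac{\log 2}{L}.
\end{equation*}
Replacing $u_0$ by $0$ costs $O(L^2u_0^2)=O(1)$, and evenness of $\widehat\phi$ gives the factor $\tfrac12\int_\R|u|\widehat\phi(u)^2\,du$. The remainder is $-\int R(t)h'(t)\,dt$ plus boundary terms. Using $h'(t)\ll\log(t)/t^2$ uniformly in $L$ (the derivative of $\widehat\phi^2$ brings an extra $1/L\le1/\log t$ on the support), this remainder is $\ll\int_2^\infty \log(t)^{-2}\,t^{-1}\,dt=O(1)$.

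Raising to the power $j/2$ gives $\frac{\log(B)^j}{2^{j/2}}\left(\int_\R|u|\widehat\phi(u)^2\,du\right)^{j/2}+O(\log(B)^{j-2})$. Multiplying by the number of pair partitions $\frac{j!}{(j/2)!\,2^{j/2}}$ yields the stated leading constant $\frac{j!}{(j/2)!\,2^{j}}$, with all error terms within $O_\nu\!\left(\log(B)^{j-2}+B^{3j\nu/2-1/3d}\log(B)^j\right)$.
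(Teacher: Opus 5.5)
Your proposal is correct and follows the paper's proof essentially step for step. You split into pair partitions (Proposition~\ref{prop:frobenius_trace}(i)), all-even partitions with a block of size at least $4$ (part (ii)), and partitions with an odd block (part (iii)), and you evaluate the one-place sum by Abel summation against the prime ideal theorem, exactly as the paper does; you are in fact more careful than the paper on two points it glosses over, namely removing the distinctness condition on $v_1,\dots,v_{j/2}$ and securing an $O(1)$ rather than $O(\log B)$ error in $\sum_v g(v)$. One small slip: your intermediate bound $\sum_{q\le B^\nu}\log(q)^{s}q^{1-s/2}\ll B^{\nu(2-s/2)}\log(B)^{s}$ fails for $s\ge 5$ and needs exponent $\nu\max(2-s/2,0)$, but this does not affect the final $B^{3j\nu/2-1/3d}\log(B)^j$ bound.
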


\begin{proof}
    We may write $S_1(j,\phi,B)$ as
\begin{salign}
    & S_1(j, \phi, B)
    = \frac{1}{\#\cE_{K}(B)} \sum_{\substack{v_1,\dots,v_j\in \Val_0(K) \\ 2\nmid q_{v_i},\ 3\nmid q_{v_i}}} \prod_{i=1}^j \frac{\log(q_{v_i})}{q_{v_i}} \cdot \widehat{\phi}\left( \frac{\log(q_{v_i})}{\log(B)}\right)
\sum_{E\in\cE_{K}(B)}  \prod_{i=1}^j \widehat{a_E}(\p_{v_i})\\
     &\, = \frac{1}{\#\cE_{K}(B)} \sum_{s_1+\cdots+s_t = j} \frac{j!}{s_1! s_2!\cdots s_t!} \frac{1}{t!}
     \sum_{\substack{v_1,\dots,v_t\in \Val_0(K) \\ v_i\neq v_{i'} \\ 2\nmid q_{v_i},\ 3\nmid q_{v_i}}} \prod_{i=1}^t \frac{\log(q_{v_i})^{s_i}}{q_{v_i}^{s_i}} \cdot \widehat{\phi}\left( \frac{\log(q_{v_i})}{\log(B)}\right)^{s_i}
\sum_{E\in\cE_{K}(B)}  \prod_{i=1}^t \widehat{a_E}(\p_{v_i})^{s_i}.
\end{salign}
We decompose this sum into three pieces: 
\begin{enumerate}
    \item[(i)] $\Sigma_2$: $s_i=2$ for all $i$ (i.e., $s_i=\cdots=s_{j/2}=2$),
    \item[(ii)] $\Sigma_{even}$: $s_i$ even for all $i$, but $s_i\geq 4$ for some $i$, and
    \item [(iii)] $\Sigma_{odd}$: $s_i$ is odd for some $i$.
\end{enumerate}

\noindent
\textbf{Piece (i):} For this piece, applying Proposition \ref{prop:frobenius_trace}(i), we have 
\begin{salign}
    \Sigma_2 & = 
    \frac{1}{\#\cE_{K}(B)}\frac{j!}{(j/2)! 2^{j/2}} 
     \sum_{\substack{v_1,\dots,v_{j/2}\in \Val_0(K) \\ v_i\neq v_{i'} \\ 2\nmid q_{v_i},\ 3\nmid q_{v_i}}} \prod_{i=1}^{j/2} \frac{\log(q_{v_i})^{2}}{q_{v_i}^{2}} \cdot \widehat{\phi}\left( \frac{\log(q_{v_i})}{\log(B)}\right)^{2}
\sum_{E\in\cE_{K}(B)}  \prod_{i=1}^{j/2} \widehat{a_E}(\p_{v_i})^{2}\\
&= \frac{1}{\#\cE_{K}(B)} \frac{j!}{(j/2)! 2^{j/2}} 
     \sum_{\substack{v_1,\dots,v_{j/2}\in \Val_0(K) \\ v_i\neq v_{i'} \\ 2\nmid q_{v_i},\ 3\nmid q_{v_i}}} \prod_{i=1}^{j/2} \frac{\log(q_{v_i})^{2}}{q_{v_i}^{2}} \cdot \widehat{\phi}\left( \frac{\log(q_{v_i})}{\log(B)}\right)^{2}\\
     &\quad \times \left(\prod_{i=1}^{j/2} q_i\, \#\cE_K(B)
    +O_\varepsilon\left(\left(\sum_{k=1}^{j/2} q_k^{1/2+\epsilon} \prod_{\substack{i=1 \\ i \neq k}}^{j/2} q_i\right) B^{5/6} + \left(\prod_{i=1}^{j/2} q_i^2\right) B^{5/6-1/3d} \right)\right).
\end{salign}

Simplifying, and using that $\#\cE_K(B)\asymp B^{5/6}$ and $\widehat{\phi}$ has support contained in $(-\nu,\nu)$, we have
\begin{salign}\label{eq:messy_S_1_computation}
   \Sigma_2 
    & = \frac{j!}{(j/2)! 2^{j/2}}  \sum_{\substack{v_1,\dots,v_{j/2}\in \Val_0(K) \\ v_i\neq v_{i'} \\ 2\nmid q_{v_i},\ 3\nmid q_{v_i} \\ q_i\leq B^\nu}} \prod_{i=1}^{j/2} \frac{\log(q_{v_i})^{2}}{q_{v_i}} \cdot \widehat{\phi}\left( \frac{\log(q_{v_i})}{\log(B)}\right)^{2}\\
     &\quad + \frac{ j!}{(j/2)! 2^{j/2}}  \sum_{\substack{v_1,\dots,v_{j/2}\in \Val_0(K) \\ v_i\neq v_{i'} \\ q_i\leq B^\nu}} O_\varepsilon\left( \left(\sum_{k=1}^{j/2} \frac{\log(q_k)^2}{q_k^{3/2-\epsilon}} \prod_{\substack{i=1 \\ i \neq k}}^{j/2} \frac{\log(q_i)^2}{q_i}\right) + \prod_{i=1}^{j/2}\log(q_i)^2 B^{-1/3d} \right).
\end{salign}
Using Abel summation and the prime ideal theorem, we have
\begin{salign}
    \sum_{\substack{v_1,\dots,v_{j/2}\in \Val_0(K) \\ v_i\neq v_{i'} \\ 2\nmid q_{v_i},\ 3\nmid q_{v_i} \\ q_i\leq B^\nu}} \prod_{i=1}^{j/2} \frac{\log(q_{v_i})^{2}}{q_{v_i}} \cdot \widehat{\phi}\left( \frac{\log(q_{v_i})}{\log(B)}\right)^{2} 
    &= \prod_{i=1}^{j/2} \sum_{q_i\leq B^\nu} \frac{\log(q_{i})^{2}}{q_{i}} \cdot \widehat{\phi}\left( \frac{\log(q_{i})}{\log(B)}\right)^{2}\\
    &= \prod_{i=1}^{j/2} \int_1^\infty \frac{\log(t)}{t} \widehat{\phi}\left(\frac{\log(t)}{\log(B)}\right)^2 dt + O(1).
\end{salign}
Making the substitution $u=\log(t)/\log(B)$, we obtain
\begin{equation}
    \int_1^\infty \frac{\log(t)}{t} \widehat{\phi}\left(\frac{\log(t)}{\log(B)}\right)^2 dt 
    =  \log(B)^2 \int_0^\infty u \widehat{\phi}(u)^2 du 
    = \frac{\log(B)^2}{2} \int_\R |u| \widehat{\phi}(u)^2 du. 
\end{equation}
Substituting this into the main term of the asymptotic \eqref{eq:messy_S_1_computation}, we have
\begin{salign}\label{eq:mess_1}
    \frac{j!}{(j/2)!  2^{j/2}} \frac{\log(B)^{j}}{2^{j/2}}   \prod_{i=1}^{j/2} \int_{\R} |u| \widehat{\phi}\left( u\right)^{2} du
    = 
    \frac{j!\log(B)^j}{(j/2)!  2^{j}}  \left( \int_{\R} |u| \widehat{\phi}\left( u\right)^{2} du\right)^{j/2}.
\end{salign}
%\begin{salign}\label{eq:mess_1}
%    & \frac{\log(B)^{j}}{2^{j/2}}  \prod_{i=1}^{j/2} \int_{\R} |u| \widehat{\phi}\left( u\right)^{2} du \\
%     &\quad + \sum_{\substack{v_1,\dots,v_{j/2}\in \Val_0(K) \\ v_i\neq v_{i'} \\ q_i\leq B^\nu}} 
%     O_\varepsilon\left(  \left(\sum_{k=1}^{j/2} \frac{\log(q_k)^2}{q_k^{3/2-\epsilon}} \prod_{\substack{i=1 \\ i \neq k}}^{j/2} \frac{\log(q_i)^2}{q_i}\right) + \prod_{i=1}^{j/2}\log(q_i)^2 B^{-1/3d} \right).
%\end{salign}
We now address the error term. Arguments using the prime ideal theorem show that
\begin{equation}
    \sum_{q\leq B^\nu} \frac{\log(q)^2}{q} = O(\log(B^\nu)^2) \qquad  \text{ and }\qquad \sum_{q\leq B^\nu} \log(q)^2 = O( B^\nu \log(B^\nu)).
\end{equation}
For any $\epsilon<1/2$, we have
\begin{equation}
    \sum_{q\leq B^\nu} \frac{\log(q)^2}{q^{3/2-\epsilon}}= O(1).
\end{equation}
Therefore, the error term in \eqref{eq:messy_S_1_computation} is 
\begin{equation}
    O_{\epsilon,\nu}\left(\log(B)^{j-2} + B^{j\nu/2 - 1/3d}\log(B)^{j/2}\right).
\end{equation}

\noindent
\textbf{Piece (ii):} 
Using Proposition \ref{prop:frobenius_trace}(ii), we have
\begin{salign}
    \Sigma_{even} 
    &\ll_j \frac{1}{\#\cE_K(B)} 
    \sum_{\substack{s_1+\cdots+s_t=j\\ s_i \text{ all even}\\ \exists s_i\geq 4}}
    \sum_{\substack{v_1,\dots,v_{t}\in \Val_0(K) \\ v_i\neq v_{i'} \\ 2\nmid q_{v_i},\ 3\nmid q_{v_i}}} \prod_{i=1}^{t} \frac{\log(q_{v_i})^{s_i}}{q_{v_i}^{s_i}} \cdot \widehat{\phi}\left( \frac{\log(q_{v_i})}{\log(B)}\right)^{s_i}\\
    &\qquad \times \left(\left(\prod_{i=1}^t q_i^{s_i/2}\right)B^{5/6} + \left(\prod_{i=1}^t q_i^{s_i/2+1}\right) B^{5/6-1/3d}\right).
\end{salign}
Since $\#\cE_K(B)\asymp B^{5/6}$ and the support of $\widehat{\phi}$ is contained in $(-\nu,\nu)$, this simplifies to
\begin{equation}
    \Sigma_{even} \ll_j
     \sum_{\substack{s_1+\cdots+s_t=j\\ s_i \text{ all even}\\ \exists s_i\geq 4}} \left(\prod_{i=1}^t \sum_{q\leq B^\nu} \frac{\log(q)^{s_i}}{q^{s_i/2}}  + B^{-1/3d}\prod_{i=1}^t \sum_{q\leq B^\nu} \frac{\log(q)^{s_i}}{q^{s_i/2-1}}\right)
\end{equation}
By Abel summation and the prime ideal theorem, we have
\begin{equation}
    \sum_{q\leq B^\nu} \frac{\log(q)^{s_i}}{q^{s_i/2}} \ll 
    \begin{cases}
        \log(B)^2 & \text{ if } s_i=2,\\
        1 & \text{ if } s_i\geq 4,
    \end{cases}
\end{equation}
and
\begin{equation}
    \sum_{q\leq B^\nu} \frac{\log(q)^{s_i}}{q^{s_i/2-1}} \ll 
    \begin{cases}
        B^\nu\log(B) & \text{ if } s_i=2,\\
        \log(B)^5 & \text{ if } s_i=4,\\
        1 & \text{ if } s_i\geq 6.
    \end{cases}
\end{equation}
Since all $s_i$ are even and there exists some $i$ with $s_i\geq 4$ and $\sum s_i=j$, there can be at most $j/2-2$ indices with $s_i=2$. We thus have
\begin{salign}
    \Sigma_{even} 
    &\ll_j \log(B)^{2(j/2-2)} + B^{-1/3d}(B^\nu\log(B))^{j/2-2} \\
    &\ll_j \log(B)^{j-4} + B^{\nu(j/2-2)-1/3d}\log(B)^{j/2-2}.
\end{salign}

\noindent
\textbf{Piece (iii):} 
The same argument as in Proposition \ref{lem:S_1_j_odd} yields the bound
\begin{equation}
    \Sigma_{odd}\ll_{j,\nu} 1 + B^{3j\nu/2-1/3d}\log(B)^j.
\end{equation}
Combining the three pieces proves the lemma.
\end{proof}

We now consider the moments of the $1$-level density of elliptic curves. 

\begin{theorem}\label{thm:1-level_moment_bound}
    If the support of $\widehat{\phi}$ is contained in an interval $(-\nu,\nu)$ with $\nu\leq 2/9dm$, then
    \begin{equation}\label{eq:1-level_moment_bound}
        \frac{1}{\#\cE_{K}(B)} \sum_{E\in \cE_{K}(B)} D_1(E/K, \phi)^m 
       \leq  \sum_{k=0}^{\lfloor m/2\rfloor} \frac{m!}{(m-2k)!\, k!} \left(\widehat{\phi}(0) + \frac{\phi(0)}{2}\right)^{m-2k} \left(\int_\R |u| \widehat{\phi}(u)^2 du\right)^k + o(1). 
    \end{equation}    
\end{theorem}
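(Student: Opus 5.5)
We plan to start from the averaged upper bound \eqref{eq:1-level_moments_explicit_formula} for the $m$-th moment of $D_1(E/K,\phi)$, and insert Lemma \ref{lem:S_1_j_odd} and Lemma \ref{lem:S_1_j_even} term by term. Write $c\defeq\widehat{\phi}(0)+\phi(0)/2$ and $\sigma^2\defeq\int_\R|u|\widehat{\phi}(u)^2\,du$. The main term of \eqref{eq:1-level_moments_explicit_formula} is $\sum_{j=0}^m\binom{m}{j}c^{m-j}(-2)^j\log(B)^{-j}S_1(j,\phi,B)$. We will show that every odd $j$ contributes $o(1)$, and that every even $j=2k$ contributes
\[
\binom{m}{2k}c^{m-2k}\frac{2^{2k}}{\log(B)^{2k}}\cdot\frac{(2k)!\log(B)^{2k}}{k!\,2^{2k}}\sigma^{2k}+o(1)=\frac{m!}{(m-2k)!\,k!}c^{m-2k}\sigma^{2k}+o(1).
\]
This is exactly the $k$-th summand of \eqref{eq:1-level_moment_bound}; note that the sign $(-2)^{2k}$ is positive and the factors of $\log(B)$ cancel.

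The crucial point is that the power-saving error terms are killed by the support condition. Since $j\le m$ and $\nu\le 2/(9dm)$, we have
\[
\frac{3j\nu}{2}-\frac{1}{3d}\le\frac{3m}{2}\cdot\frac{2}{9dm}-\frac{1}{3d}=0.
\]
Hence $B^{3j\nu/2-1/3d}\log(B)^j\ll\log(B)^j$. After dividing by $\log(B)^j$, this becomes $O(1)$, not $o(1)$. So in the boundary case we must be more careful, and we expect this to be the main obstacle.

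Our first approach is to use strict support. Because $\widehat\phi$ is continuous and supported in the open interval $(-\nu,\nu)$, its support is a compact subset of $[-\nu',\nu']$ for some $\nu'<\nu$. Applying both lemmas with $\nu'$ in place of $\nu$ gives the strict inequality $3j\nu'/2-1/3d<0$, and the $B$-power term becomes $o(1)$ after dividing by $\log(B)^j$. If instead the support is allowed to fill the whole interval, we would revisit the proofs of the lemmas: the $\log(B)^j$ factors there are crude. Summing $q^{1/2}\log q$ against the weight $\log(q)/q$ and dividing by $\log(B)$ actually gives a bound like $B^{3\nu/2}/\log(B)$ per prime, so an additional saving is available. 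The remaining terms are easier. For odd $j$, Lemma \ref{lem:S_1_j_odd} gives $S_1\ll 1+o(\log(B)^j)$, so $\log(B)^{-j}S_1=o(1)$ because $j\ge1$. For even $j$, the error $O(\log(B)^{j-2})$ in Lemma \ref{lem:S_1_j_even} becomes $O(\log(B)^{-2})$ after normalization.

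Finally, we treat the secondary error sum in \eqref{eq:1-level_moments_explicit_formula}. There, each $\log(B)^{-j}S_1(j,\phi,B)$ with $j\le m-1$ is $O(1)$ by the same lemmas. The sum is therefore $O_m\!\left(\frac{\log\log B}{\log B}\right)=o(1)$. Combining the main-term evaluation with these error estimates, and summing over $k\le\lfloor m/2\rfloor$, gives \eqref{eq:1-level_moment_bound}.
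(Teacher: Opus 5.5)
Your proposal is correct and is essentially the paper's proof. You insert Lemmas \ref{lem:S_1_j_odd} and \ref{lem:S_1_j_even} into \eqref{eq:1-level_moments_explicit_formula}, show that odd $j$ contribute $o(1)$, collect the even $j=2k$ main terms via $\binom{m}{2k}\frac{(2k)!}{k!}=\frac{m!}{(m-2k)!\,k!}$, and absorb the secondary sum into $O(\log\log B/\log B)$.

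Your concern about the endpoint $j=m$, $\nu=2/(9dm)$ is legitimate, and the paper passes over it: the lemmas as stated give only $O(1)$ there. Of your two remedies, it is the second that matters for the paper's application, because the test function \eqref{eq:phi} has $\widehat{\phi}$ supported on the closed interval $[-\nu,\nu]$. In fact $\sum_{q\le B^{\nu}}q^{1/2}\log q\asymp B^{3\nu/2}$, so the extra factor $\log B$ in the lemmas is spurious, and the offending term is $O(\log(B)^{-j})$ after normalization.
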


\begin{proof}
    From equation \eqref{eq:1-level_moments_explicit_formula}, the left hand side of equation \eqref{eq:1-level_moment_bound} is bounded above by
    \begin{salign}\label{eq:1-level_moments}
        &  \sum_{j=0}^m \binom{m}{j} \left(\widehat{\phi}(0) + \frac{\phi(0)}{2}\right)^{m-j} \frac{(-2)^j}{\log(B)^j} S_1(j,\phi, B) \\
        &\ + O_m\left(\frac{\log\log(B)}{\log(B)} \sum_{j=0}^{m-1} \binom{m-1}{j} \left(\widehat{\phi}(0) + \frac{\phi(0)}{2}\right)^{m-1-j} \frac{(-2)^j}{\log(B)^j} S_1(j,\phi,B)\right).
    \end{salign}
    For $j$ odd, Lemma \ref{lem:S_1_j_odd}, together with the assumption $\nu\leq2/9dm$, implies that the terms of \eqref{eq:1-level_moments} with $j$ odd are $o(1)$. 

    For $j$ is even, let $k$ be such that $j=2k$. In this case, by Lemma \ref{lem:S_1_j_even}, together with the assumption $\nu\leq2/9dm$, we have
    \begin{equation}
        S_1(2k,\phi,B) 
        = \frac{(2k)!\log(B)^{2k}}{k! 2^{2k}} \left(\int_\R |u| \widehat{\phi}(u)^2 du\right)^k + O(\log(B)^{2k-2}).
    \end{equation}
    Substituting this into the expression \eqref{eq:1-level_moments}, we obtain
    \begin{equation}
        \sum_{k=0}^{\lfloor m/2\rfloor} \binom{m}{2k} \left(\widehat{\phi}(0) + \frac{\phi(0)}{2}\right)^{m-2k} \frac{(2k)!}{k!} \left(\int_\R |u| \widehat{\phi}(u)^2 du\right)^k+o(1).
    \end{equation}
    The theorem then follows from the identity
    \begin{equation}
        \binom{m}{2k}\frac{(2k)!}{k!} = \frac{m!}{(m-2k)!\, k!}.
    \end{equation}
\end{proof}

By the relation between the 1-level density and the analytic rank of elliptic curves (equation \eqref{eq:1-level_density}) Theorem \ref{thm:1-level_moment_bound} implies the following bound on moments of analytic ranks of elliptic curves:

\begin{corollary}\label{cor:moment_bound}
     If the support of $\widehat{\phi}$ is contained in an interval $(-\nu,\nu)$ with $\nu\leq 2/9dm$, then
    \begin{equation}
        \frac{1}{\#\cE_{K}(B)} \sum_{E\in \cE_{K}(B)} r_{an}(E)^m 
        \leq \sum_{k=0}^{\lfloor m/2\rfloor} \frac{m!}{(m-2k)!\, k!} \left(\frac{\widehat{\phi}(0)}{\phi(0)} + \frac{1}{2}\right)^{m-2k} \left(\frac{1}{\phi(0)^2}\int_\R |u| \widehat{\phi}(u)^2 du\right)^k + o(1). 
    \end{equation}    
\end{corollary}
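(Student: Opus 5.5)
The plan is to deduce Corollary~\ref{cor:moment_bound} directly from Theorem~\ref{thm:1-level_moment_bound} via the pointwise inequality \eqref{eq:rank_from_density}. We may assume $\phi$ is nonnegative on $\R$ and $\phi(0)>0$; otherwise the right-hand side is not defined, or \eqref{eq:rank_from_density} is unavailable. For such $\phi$, every $E\in\cE_K(B)$ satisfies
\begin{equation*}
0\le r_{\rm an}(E)\le \frac{D_1(E/K,\phi)}{\phi(0)},
\end{equation*}
where $D_1$ is computed with parameter $X=B$. Here GRH and modularity are assumed as in Proposition~\ref{prop:modified-expicit-formula}.

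The first point to check is that $D_1(E/K,\phi)\ge 0$. This holds because, under GRH, every $\gamma_j$ is real and $\phi\ge0$ on $\R$, so every summand in \eqref{eq:1-level_density} is nonnegative. Since $t\mapsto t^m$ is increasing on $[0,\infty)$, we get
\begin{equation*}
r_{\rm an}(E)^m\le \phi(0)^{-m}D_1(E/K,\phi)^m.
\end{equation*}
Averaging over $\cE_K(B)$ then gives
\begin{equation*}
\frac{1}{\#\cE_K(B)}\sum_{E\in\cE_K(B)} r_{\rm an}(E)^m\le \phi(0)^{-m}\,\frac{1}{\#\cE_K(B)}\sum_{E\in\cE_K(B)} D_1(E/K,\phi)^m.
\end{equation*}

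Next, apply Theorem~\ref{thm:1-level_moment_bound}, whose hypothesis on the support of $\widehat\phi$ is exactly the one assumed here. This bounds the right-hand side by
\begin{equation*}
\phi(0)^{-m}\Big(\sum_{k}\tfrac{m!}{(m-2k)!k!}\big(\widehat\phi(0)+\tfrac{\phi(0)}2\big)^{m-2k}\big(\textstyle\int_\R|u|\widehat\phi(u)^2du\big)^k+o(1)\Big).
\end{equation*}
Distribute $\phi(0)^{-m}=\phi(0)^{-(m-2k)}\phi(0)^{-2k}$ in each term. The factor $\phi(0)^{-(m-2k)}$ turns the first bracket into $\big(\widehat\phi(0)/\phi(0)+1/2\big)^{m-2k}$, and $\phi(0)^{-2k}$ turns the second into $\big(\phi(0)^{-2}\int|u|\widehat\phi^2\big)^k$. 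Since $\phi(0)$ is a fixed positive constant depending only on $\phi$, the term $\phi(0)^{-m}\,o(1)$ is still $o(1)$.

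There is no real obstacle in this argument. The only care needed is the sign issue: one must use GRH (real $\gamma_j$) together with $\phi\ge0$ to justify raising the inequality to the $m$-th power. One should also note that the implicit hypotheses $\phi\ge0$ and $\phi(0)>0$ are the ones under which \eqref{eq:rank_from_density} was stated.
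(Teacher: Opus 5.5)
Your argument is correct and is the same deduction the paper makes: take the pointwise bound \eqref{eq:rank_from_density}, $0\le r_{\rm an}(E)\le D_1(E/K,\phi)/\phi(0)$, raise it to the $m$-th power, average over $\cE_K(B)$, apply Theorem~\ref{thm:1-level_moment_bound}, and distribute $\phi(0)^{-m}$ across the terms. Your explicit remarks only spell out what the paper leaves unstated: that $\phi\ge 0$ and $\phi(0)>0$ are implicit hypotheses, and that monotonicity of $t\mapsto t^m$ on $[0,\infty)$ justifies taking $m$-th powers (nonnegativity of $D_1$ is in fact already forced by $0\le r_{\rm an}(E)\le D_1/\phi(0)$).
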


To give a more explicit bound, we choose the test function
\begin{equation}\label{eq:phi}
\phi(y)=\begin{cases}
\left(\frac{\sin(\pi \nu y)}{2\pi y}\right)^2 & \text{ if } y\neq 0,\\
\nu^2/4 & \text{ if }y=0,
\end{cases}
\end{equation}
whose Fourier transform is
\begin{equation}
\widehat{\phi}(t)=\begin{cases}
\frac{1}{2}\left(\frac{\nu}{2}-\frac{|t|}{2}\right) &\text{ for } |t|\leq \nu\\
0 &\text{ for } |t|>\nu.
\end{cases}
\end{equation}
This test function satisfies the conditions of Proposition \ref{prop:modified-expicit-formula} (despite not being a Schwartz function). For this test function we have $\phi(0)=\nu^2/4$, $\widehat{\phi}(0)=\nu/4$, and
\begin{equation}
    \int_\R |u|\widehat{\phi}(u)^2 du = \frac{\phi(0)^2}{6}.
\end{equation}
Plugging these values into Corollary \ref{cor:moment_bound} gives Theorem \ref{thm:moment_bound}.

%TODO: remark possible improvements in $K=\Q$ case using ... But that this would still be difficult to carry out for general number fields. 

\section{Bounding the proportion of elliptic curves with large rank}

In this section we  obtain a bound on the proportion of elliptic curves with analytic rank larger than some bound.

\begin{proof}[Proof of Theorem \ref{thm:probability_large_rank}.]
By Proposition \ref{prop:1-level_density_expression} and the definition of the $1$-level density \eqref{eq:1-level_density}, we have
\begin{equation}
    r_{an}(E)\leq \frac{\widehat{\phi}(0)}{\phi(0)} + \frac{1}{2} - \frac{2}{\phi(0)\log(B)} U_1(\phi,E,B) + O\left(\frac{\log\log(B)}{\log(B)}\right).
\end{equation}
Choosing the test function $\phi$ defined in \eqref{eq:phi}, our inequality becomes
\begin{equation}
    r_{an}(E)\leq \frac{1}{\nu} + \frac{1}{2} - \frac{8}{\nu^2\log(B)} U_1(\phi,E,B) + O\left(\frac{\log\log(B)}{\log(B)}\right).
\end{equation}
Now let $\beta\in \R_{>0}$ and assume $r_{an}(E)\geq \beta$. Then, for sufficiently large $B$, 
\begin{equation}
    \frac{\nu^2}{4}\left(\beta-\frac{1}{\nu} - \frac{1}{2}\right) \leq - \frac{2}{\log(B)} U_1(\phi,E,B).
\end{equation}
We now rewrite this expression in a way that we can apply Lemma \ref{lem:S_1_j_even}. 
Letting $m\in \Z_{>0}$, we have
\begin{equation}
    \#\{E\in \cE_K(B) : r_{an}(E)\geq \beta\} \left(\frac{\nu^2(2\beta- 2/\nu - 1)}{8}\right)^{2m} 
    \leq \sum_{E\in \cE_K(B)}\left(- \frac{2}{\log(B)} U_1(\phi,E,B)\right)^{2m}.
\end{equation}
Rearranging terms and recalling the definition of $S_1(j,\phi,B)$, for sufficiently large $B$ (depending on $m$), we have
\begin{equation}
    \overline{\bP}_K(r_{an}(E)\geq \beta) \leq \left(\frac{8}{\nu^2(2\beta- 2/\nu - 1)}\right)^{2m} \left(\frac{2}{\log(B)}\right)^{2m} S_1(2m,\phi,B).
\end{equation}
Taking $\nu=2/9dm$ and applying Lemma \ref{lem:S_1_j_even}, this becomes
\begin{salign}
     \overline{\bP}_K(r_{an}(E)\geq \beta) 
     &\leq \left(\frac{8}{\nu^2(2\beta- 2/\nu - 1)}\right)^{2m} \frac{(2m)!}{m!} \left(\frac{\phi(0)^2}{6}\right)^{m}\\
     & = \frac{(2m)!}{m!}\left(\frac{2}{3 (2\beta - 2/\nu -1)^2}\right)^{m}\\
     & = \frac{(2m)!}{m!}\left(\frac{2}{3(2\beta - 9dm -1)^2}\right)^{m}.
\end{salign}
We now determine an asymptotic for this expression. 
Using Stirling's approximation, we have
\begin{equation}
	\frac{(2m)!}{m!} \ll \left(\frac{4m}{e}\right)^m,
\end{equation}
and therefore
\begin{equation}
\frac{(2m)!}{m!}\left(\frac{2}{3(2\beta - 9dm -1)^2}\right)^{m}\ll \left(\frac{8m}{3e(2\beta-9dm-1)^2}\right)^m.
\end{equation}
Choose 
\begin{equation}
    m=\left\lfloor\frac{2\beta}{9d} - \frac{\beta}{\log(\beta)}\right\rfloor. 
\end{equation}
Then, for $\beta$ sufficiently large (so that $m\ge 1$), we have 
\begin{equation}
	\frac{2\beta}{9d} - \frac{\beta}{\log(\beta)}-1 < m \leq \frac{2\beta}{9d}.
\end{equation}
Applying these bounds for $m$ we have
\begin{equation}
	\frac{8m}{3e(2\beta-9dm-1)^2}
	\leq \frac{16\beta/9d}{3e\left(\frac{9d\beta}{\log(\beta)}-1\right)^2}.
\end{equation}
For sufficiently large $\beta$ this is bounded above by 
\begin{equation}
	\frac{16\beta/9d}{3e\left(\frac{8d\beta}{\log(\beta)}\right)^2}= \frac{\log(\beta)^2}{108ed^3\beta}.
\end{equation}
From this and our lower bound for $m$, we have
\begin{equation}
	\overline{\bP}_K(r_{an}(E)\geq \beta) 
	\ll \left(\frac{\log(\beta)^2}{108ed^3\beta}\right)^{m} 
	\ll \left(\frac{\log(\beta)^2}{108ed^3\beta}\right)^{\frac{2\beta}{9d}-\frac{\beta}{\log(\beta)}-1}. 
\end{equation}
Further simplifying this bound, we have
\begin{salign}	\left(\frac{\log(\beta)^2}{108ed^3\beta}\right)^{\frac{2\beta}{9d}-\frac{\beta}{\log(\beta)}-1}
	& \ll \beta^{-\frac{2\beta}{9d}+\frac{\beta}{\log(\beta)}+1} \left(\frac{\log(\beta)^2}{108ed^3}\right)^{\frac{2\beta}{9d}}\\
	& \ll \beta^{-\frac{2\beta}{9d}+o(\beta)} \beta^{\frac{4\beta\log\log(\beta)}{9d\log(\beta)}}
	\beta^{\frac{2\beta\log(108ed^3)}{9d\log(\beta)}}\\
	&\ll_d \beta^{-\frac{2\beta}{9d}+o(\beta)}.
\end{salign}
Therefore
\begin{equation}
	\overline{\bP}_K(r_{an}(E) \geq \beta) \ll_d \beta^{-\frac{2\beta}{9d}+o(\beta)}.
\end{equation} 
\end{proof}

%\section{Final discussion}

%TODO: discuss limitations of methods used. possiblefuture improvements / challenges to alternative methods.

\bibliographystyle{alpha}
\bibliography{bibfile}

\end{document}